\newtheorem{theorem}[equation]{Theorem}
\newtheorem{lemma}[equation]{Lemma}
\newtheorem{proposition}[equation]{Proposition}
\newtheorem{corollary}[equation]{Corollary}
\theoremstyle{definition}
\newtheorem{definition}[equation]{Definition}
\newtheorem{example}[equation]{Example}
\theoremstyle{remark}
\newtheorem{remark}[equation]{Remark}
\numberwithin{equation}{section}
\newcommand{\vecspan}{\operatorname{span}}
\newcommand{\la}[1]{\text{$\mathcal{#1}$}}
\newcommand{\lb}[1]{\text{$\mathscr{#1}$}}
\def\Z{\mathbb Z}
\newcommand{\powerset}[1]{\text{$\lb{P}(#1)$}}								
\newcommand{\eword}{\text{$\omega$}}											
\newcommand{\xia}{\text{$\xi^\alpha$}}											
\newcommand{\ph}{\text{$\hat{\phi}$}}											
\newcommand{\usetr}[2]{\text{$\uparrow_{\scriptscriptstyle{#2}}\hspace{-0.1cm}{#1}$}} 
\newcommand{\dgraphg}[1]{\text{$\lb{#1}$}}									
\newcommand{\dgraphupleg}[3]{\text{$\dgraphg{#1}=(\dgraphg{#1}^0,\dgraphg{#1}^1,#2,#3)$}}	
\newcommand{\dgraph}{\text{$\dgraphg{E}$}}									
\newcommand{\dgraphuple}{\text{$\dgraphupleg{E}{r}{s}$}}					
\newcommand{\alfg}[1]{\text{$\lb{#1}$}}										
\newcommand{\acfg}[1]{\text{$\lb{#1}$}}										
\newcommand{\lbfg}[1]{\text{$\lb{#1}$}}										
\newcommand{\acfrg}[1]{\text{$\acfg{B}_{#1}$}}								
\newcommand{\alf}{\text{$\alfg{A}$}}											
\newcommand{\acf}{\text{$\acfg{B}$}}											
\newcommand{\lbf}{\text{$\lbfg{L}$}}											
\newcommand{\acfra}{\text{$\acfg{B}_{\alpha}$}}								
\newcommand{\lgraphg}[2]{\text{$(\dgraphg{#1},\lbfg{#2})$}}				
\newcommand{\lspaceg}[3]{\text{$(\dgraphg{#1},\lbfg{#2},\acfg{#3})$}}		
\newcommand{\lgraphgi}[3]{\text{$(\dgraphg{#1}_{#3},\lbfg{#2}_{#3})$}}
\newcommand{\lgraph}{\text{$\lgraphg{E}{L}$}}								
\newcommand{\lspace}{\text{$\lspaceg{E}{L}{B}$}}							
\newcommand{\lspacei}[1]{\text{$(\dgraphg{E}_{#1},\lbfg{L}_{#1},\acfg{B}_{#1})$}}
\newcommand{\awsetg}[2]{\text{$\lbfg{#1}^{#2}$}}	
\newcommand{\awplus}{\text{$\awsetg{L}{\scriptscriptstyle{\geq 1}}$}}								
\newcommand{\awn}[1]{\text{$\awsetg{L}{#1}$}}								
\newcommand{\awstar}{\text{$\awsetg{L}{\ast}$}}							
\newcommand{\awinf}{\text{$\awsetg{L}{\infty}$}}							
\newcommand{\awleinf}{\text{$\awsetg{L}{\scriptscriptstyle{\leq\infty}}$}}					
\newcommand{\F}{\mathbb{F}}
\newcommand {\N}{\mathbb{N}}
\newcommand{\ftight}{{\normalfont \textsf{T}}}										
\newcommand{\ftg}[1]{\text{$\la{#1}$}}											
\newcommand{\ft}{\text{$\ftg{F}$}}												
\newcommand{\scj}{\subseteq}
\newcommand{\nn}{\mathbb{N}}
\newcommand{\zn}{\mathbb{Z}}
\newcommand{\Lc}{\operatorname{Lc}}
\newcommand{\Ann}{\operatorname{Ann}}
\begin{document}

\title{Leavitt path algebras of labelled graphs}

\author[G. Boava]{Giuliano Boava}
\author[G.G. de Castro]{Gilles G. de Castro}
\author[D. Gonçalves]{Daniel Gonçalves}
\address[Giuliano Boava, Gilles G. de Castro and Daniel Gonçalves]{Departamento de Matem\'atica, Universidade Federal de Santa Catarina, 88040-970 Florian\'opolis SC, Brazil. }
\email{g.boava@ufsc.br \\ gilles.castro@ufsc.br \\ daemig@gmail.com}
\author[D.W. van Wyk]{Daniel W. van Wyk}
\address[Daniel W. van Wyk]{Department of Mathematics, Dartmouth College, Hanover, NH 03755-3551 USA.}
\email{daniel.w.van.wyk@dartmouth.edu }

\keywords{Labelled graphs, Leavitt path algebras, Steinberg algebras, Partial skew group rings, Cuntz-Pimsner rings, Labelled spaces.}
\subjclass[2020]{Primary: 16S88. Secondary: 16S35, 16S99, 22A22.}

\thanks{The second and third authors were partially supported by Capes-Print Brazil. The third author was partially supported by CNPq - Conselho Nacional de Desenvolvimento Científico e Tecnológico - Brazil.
 }

\begin{abstract}
A Leavitt labelled path algebra over a commutative unital ring is associated with a labelled space, generalizing Leavitt path algebras associated with graphs and ultragraphs as well as torsion-free commutative algebras generated by idempotents. We show that Leavitt labelled path algebras can be realized as partial skew group rings, Steinberg algebras, and Cuntz-Pimsner algebras. Via these realizations we obtain generalized uniqueness theorems, a description of diagonal preserving isomorphisms and we characterize simplicity of Leavitt labelled path algebras. In addition, we prove that a large class of partial skew group rings can be realized as Leavitt labelled path algebras.

\end{abstract}

\maketitle

\section{Introduction}\label{section:introduction}

Leavitt path algebras were independently defined in \cite{AAP} and \cite{AGG} and, due to their high versatility, have grown into an important area of algebra. Among applications of Leavitt path algebras, we highlight that they have deep connections with symbolic dynamics and the theory of graph C*-algebras. For example, the notion of flow equivalence of shifts of finite type in symbolic dynamics is related to  Morita theory and the Grothendieck group in the theory of Leavitt path algebras, see  \cite{ALP, Hazrat13}; and ring isomorphism (or Morita equivalence) between two Leavitt path algebras over the field of the complex numbers induces, for some graphs, isomorphism (or Morita equivalence) of the respective graph C*-algebras (see \cite{AT11, RTh}). For a detailed introduction to the theory of Leavitt path algebras, we refer the reader to \cite{AbrAraMol}.

In the C*-setting, various generalizations of graph C*-algebras have been constructed (such as ultragraph, higher-rank graph, and labelled graph C*-algebras). The connection of these C*-algebras with the associated intrinsic shift spaces has been explored deeply, for example, see \cite{MR3938320}, where conjugacy that preserves length of ultragraph shift spaces is described in terms of isomorphisms of the associated ultragraph C*-algebras, in \cite{TokeRout}, where continuous orbit equivalence, eventual one-sided conjugacy, and two-sided conjugacy of higher-rank graphs is characterized in terms of the C*-algebras (and the Kumjian-Pask algebras) of the higher-rank graphs, and in \cite{MR3614028}, where the K-theory of the C*-algebra associated with a labelled spaces is computed. In the algebraic setting, versions of ultragraph and higher-rank graphs algebras have been defined, see \cite{imanfar2017leavitt, ACHR}, but an algebraic analogue of labelled graph C*-algebras is still missing. Our main goal in this paper is to introduce Leavitt path algebras associated with labelled graphs, which we will call Leavitt labelled path algebras, and to start their study.

Labelled graphs are the key ingredient to construct sofic shifts, which form the smallest collection of shift spaces that contains all shifts of finite type and also contains all factors of
each space in the collection (see \cite{LindMarcus}). The connection between labelled graph C*-algebras and symbolic dynamics is explored in \cite{MR3614028, BP1, GillesDanie}, for example. In our work, we start to describe relations between the intrinsic dynamics and combinatorics of a labelled graph (in fact a labelled space, as we shall soon see) and an associated Leavitt labelled path algebra. For this, we will use a partial skew group ring, a groupoid (Steinberg) algebra, and a Cuntz-Pimsner ring description of the Leavitt labelled path algebras we define, and will then interpret, in our context, general results (about groupoid algebras, partial skew group rings, and Cuntz-Pimsner rings) that allow us to trace finer structures of the algebras back to properties of the associated dynamical systems and vice-versa.
 
A key ingredient to the above descriptions is the Graded Uniqueness Theorem. In contrast to Leavitt path algebras of graphs and ultragraphs (see \cite{AbrAraMol,imanfar2017leavitt}), for which the Graded Uniqueness Theorem is proved before any realization, in this paper, we first prove that a Leavitt labelled path algebra can be realized as a Cuntz-Pimsner ring, and then use the Graded Uniqueness Theorem available to Cuntz-Pimsner rings.
 
Going back one last time to the C*-algebraic setting, we recall that labelled graph C*-algebras provide a common framework for ultragraph C*-algebras and the C*-algebras associated with shift spaces by Matsumoto and Carlsen (see \cite{BP1}). Our algebraic definition of labelled graph algebras also includes ultragraph Leavitt path algebras but, as far as we know, algebraic analogues of Matsumoto and Carlsen algebras have not been defined yet, and we intend to explore them in the future. Moreover, the Leavitt labelled path algebras we define include partial skew group rings associated with partial actions by the free group on commutative, torsion-free algebras generated by idempotents. This last result is a partial converse to the realization of Leavitt labelled path algebras as partial skew group rings mentioned in the previous paragraph and, as far as we know, there is no analytical analogue to it.

Before we proceed, we notice that when studying algebras associated with a labelled graph, one has to be aware of a key point: usually there is more than one algebra that can be associated with a given labelled graph. This leads to the notion of a labelled space and its associated algebra. A labelled space is a triple $\lspace$, where $\lgraph$ is a labelled graph and $\acf$ is a family of subsets of $\dgraph^0$ that satisfy certain conditions (see Section~\ref{labelled?}). For each labelled space (that is also normal) and each unital commutative ring $R$, we associate an algebra over $R$, which is given by generators and relations (see Definition~\ref{def:LPA}), and which is the focus of the paper.

We now give a more detailed overview of the paper.

In Section~\ref{section:preliminaries}, we provide the reader with the necessary preliminaries on labelled graphs, labelled spaces, and the inverse semigroup associated with a labelled space, following \cite{BP1} and \cite{BoavaDeCastroMortari1}. 

In Section~\ref{section:leavitt.labelled.path.algebras}, we define the Leavitt labelled path algebra associated with a normal labelled space and describe some of its properties, including a $\Z$-grading. 

Using the tight spectrum of the inverse semigroup associated with a labelled space, and building on ideas in \cite{GillesDanie}, in Section~\ref{section:partial.skew.group}, we describe a partial action associated with a normal labelled space and construct its associated partial skew group ring. Moreover, we prove that there is a surjective homomorphism from the Leavitt labelled path algebra onto the partial skew group ring (Proposition~\ref{prop:OntoHomLeavittPartialSkew}).

To prove injectivity of the aforementioned homomorphism, we dedicate Section~\ref{section:graded.uniqueness.theorem} to obtain a Graded Uniqueness Theorem (Corollary~\ref{GUT}). For this, we realize the Leavitt labelled path algebra associated with a normal labelled space as a Cuntz-Pimsner ring (Theorem~\ref{LPACPR}), and then apply results in \cite{CO55} to get the desired Graded Uniqueness Theorem (Theorem~\ref{GUT}), which characterizes when graded homomorphisms from the Leavitt labelled path algebra are injective. As corollaries we get the description of a Leavitt labelled path algebra as a partial skew group ring (Theorem~\ref{thm:SkewRingLeavittIsom}) and the characterization of continuous orbit equivalence between the partial actions associated with two labelled spaces in terms of an isomorphism that preserves the diagonal subalgebras of the respective partial skew group rings (Corollary~\ref{orb1qnob}). 

Proceeding, in Section~\ref{section:steinberg.algebra}, we use the partial skew group ring description of Leavitt labelled path algebras to obtain, via the transformation groupoid, the realization of Leavitt labelled path algebras as Steinberg (groupoid) algebras (Theorem~\ref{thm:LeavittSteinbergIsom}). As consequences, we describe when a Leavitt labelled path algebra is unital (Corollary~\ref{unital}) and describe diagonal preserving isomorphisms of these algebras, see Corollary~\ref{orb2qnob} (which is further improved in Corollary~\ref{orb1qnob}).

We devote Section~\ref{section:examples} to examples of Leavitt labelled path algebras. In particular, we show that Leavitt path algebras of graphs and ultragraphs can be seen as Leavitt labelled path algebras. We also give sufficient conditions under which the converse holds, that is, under which a Leavitt labelled path algebra can be seen as a graph Leavitt path algebra (Proposition~\ref{LLPALPA}). Furthermore, we prove that every torsion-free commutative algebra generated by idempotents is isomorphic to a Leavitt labelled path algebra. These provide examples of Leavitt labelled path algebras that are not isomorphic to any Leavitt path algebra of a graph (or an ultragraph). It is also in this section that we show that a class of partial skew group rings associated with partial actions by the free group on commutative, torsion-free algebras generated by idempotents can be realized as Leavitt labelled graph algebras, see Corollary~\ref{jacarevoador}. The reader might want to read this section right after Section~\ref{section:leavitt.labelled.path.algebras}, to get a feel of the algebras associated with labelled spaces, but some tools necessary to prove the results we just mentioned are only developed in the sections following Section~\ref{section:leavitt.labelled.path.algebras}.

Next, in Section~\ref{section:general.uniqueness.theorems}, we describe a general uniqueness theorem, which does not rely on the grading of the algebras nor on the combinatorial structure of the labelled graph. Our main result in this section states that a homomorphism from a Leavitt labelled path algebra is injective if, and only if, it is injective in the abelian core subalgebra (Theorem~\ref{GUTA}). This is done by describing the isotropy bundle of the transformation groupoid and showing that the abelian core subalgebra is isomorphic to the Steinberg algebra of the interior of the isotropy bundle (Proposition~\ref{p:abcore}).

Finally, we finish the paper in Section~\ref{section:simplicity} using the Steinberg algebra realization of Leavitt labelled path algebras to characterize simplicity of these algebras in terms of the hereditary and saturated subsets of the underlying graph and the fulfillment of Condition~$(L_\acf)$ (which is a type of Condition~(L) for labelled graphs), see Theorem~\ref{theorem:simple.labelled.space.algebra}.

\section{Preliminaries}\label{section:preliminaries}

Throughout this paper we let $\N$ and $\N^*$ denote the set of non-negative integers, and positive integers, respectively.

\subsection{Labelled graphs and labelled spaces}\label{labelled?}

A \emph{(directed) graph} $\dgraphuple$ consists of non-empty sets $\dgraph^0$ (of \emph{vertices}), $\dgraph^1$ (of \emph{edges}), and \emph{range} and \emph{source} functions $r,s:\dgraph^1\to \dgraph^0$. A vertex $v$ is called a \emph{sink} if $s^{-1}(v)=\emptyset$, and it is called an \emph{infinite emitter} if $s^{-1}(v)$ is an infinite set. The set of all sinks is denoted by $\dgraph^0_{sink}$. 

A \emph{path of length $n$} on a graph $\dgraph$ is a sequence $\lambda=\lambda_1\lambda_2\ldots\lambda_n$ of edges such that $r(\lambda_i)=s(\lambda_{i+1})$ for all $i=1,\ldots,n-1$. We write $|\lambda|=n$ for the length of $\lambda$ and regard vertices as paths of length $0$. $\dgraph^n$ stands for the set of all paths of length $n$ and $\dgraph^{\ast}=\cup_{n\geq 0}\dgraph^n$. Similarly, we define a \emph{path of infinite length} (or an \emph{infinite path}) as an infinite sequence $\lambda=\lambda_1\lambda_2\ldots$ of edges such that $r(\lambda_i)=s(\lambda_{i+1})$ for all $i\geq 1$; for such a path, we write $|\lambda|=\infty$ and we let $\dgraph^{\infty}$ denote the set of all infinite paths.

A \emph{labelled graph} consists of a graph $\dgraph$ together with a surjective \emph{labelling map} $\lbf:\dgraph^1\to\alf$, where $\alf$ is a fixed non-empty set, called an \emph{alphabet}, and whose elements are called \emph{letters}. $\alf^{\ast}$ stands for the set of all \emph{finite words} over $\alf$, together with the \emph{empty word} \eword, and $\alf^{\infty}$ is the set of all \emph{infinite words} over $\alf$. We consider $\alf^{\ast}$ as a monoid with operation given by concatenation. In particular, given $\alpha\in \alf^{\ast}\setminus\{\eword\}$ and $n\in\N^*$, $\alpha^n$ represents $\alpha$ concatenated $n$ times and $\alpha^{\infty}\in \alf^{\infty}$ is $\alpha$ concatenated infinitely many times.

The labelling map $\lbf$ extends in the obvious way to $\lbf:\dgraph^n\to\alf^{\ast}$ and $\lbf:\dgraph^{\infty}\to\alf^{\infty}$. Let $\awn{n}=\lbf(\dgraph^n)$ be the set of \emph{labelled paths $\alpha$ of length $|\alpha|=n$}, and $\awinf=\lbf(\dgraph^{\infty})$ is the set of \emph{infinite labelled paths}. We consider $\eword$ as a labelled path with $|\eword|=0$, and set $\awplus=\cup_{n\geq 1}\awn{n}$, $\awstar=\{\eword\}\cup\awplus$, and $\awleinf=\awstar\cup\awinf$.  

For $\alpha\in\awstar$ and $A\in\powerset{\dgraph^0}$ (the power set of $\dgraph^0$), the \emph{relative range of $\alpha$ with respect to} $A$ is the set
\[r(A,\alpha)=
\begin{cases}
\{r(\lambda)\ |\ \lambda\in\dgraph^{\ast},\ \lbf(\lambda)=\alpha,\ s(\lambda)\in A\}, &
\text{if }\alpha\in\awplus \\ 
A, & \text{if }\alpha=\eword.
\end{cases}
\]
The \emph{range of $\alpha$}, denoted by $r(\alpha)$, is the set \[r(\alpha)=r(\dgraph^0,\alpha),\]
so that $r(\eword)=\dgraph^0$ and, if $\alpha\in\awplus$, then  $r(\alpha)=\{r(\lambda)\in\dgraph^0 \mid \lbf(\lambda)=\alpha\}$. We also define \[\lbf(A\dgraph^1)=\{\lbf(e) \mid e\in\dgraph^1\ \mbox{and}\ s(e)\in A\}=\{a\in\alf \mid r(A,a)\neq\emptyset\}.\]

A labelled path $\alpha$ is a \emph{beginning} of a labelled path $\beta$ if $\beta=\alpha\beta'$ for some labelled path $\beta'$. Labelled paths  $\alpha$ and $\beta$ are \emph{comparable} if either one is a beginning of the other. If $1\leq i\leq j\leq |\alpha|$, let $\alpha_{i,j}=\alpha_i\alpha_{i+1}\ldots\alpha_{j}$ if $j<\infty$ and $\alpha_{i,j}=\alpha_i\alpha_{i+1}\ldots$ if $j=\infty$. If $j<i$ set $\alpha_{i,j}=\eword$. Define $\overline{\awinf}=\overline{\lbf(\dgraph^{\infty})}=\{\alpha\in\alf^{\infty}\mid \alpha_{1,n}\in\awstar,\forall n\in\N\}$, that is, it is the set of all infinite words such that all beginnings are finite labelled paths. Also we write $\overline{\awleinf}=\awstar\cup\overline{\awinf}$.

A \emph{labelled space} is a triple $\lspace$ where $\lgraph$ is a labelled graph and $\acf$ is a family of subsets of $\dgraph^0$ which is closed under finite intersections and finite unions, contains  $r(\alpha)$ for every $\alpha\in\awplus$, and is \emph{closed under relative ranges}, that is, $r(A,\alpha)\in\acf$ for all $A\in\acf$ and all $\alpha\in\awstar$. The family of sets $\acf$ is called an \textit{accommodating family} for $\lgraph$. A labelled space $\lspace$ is \emph{weakly left-resolving} if for all $A,B\in\acf$ and all $\alpha\in\awplus$ we have $r(A\cap B,\alpha)=r(A,\alpha)\cap r(B,\alpha)$.  A weakly left-resolving labelled space such that $\acf$ is closed under relative complements will be called \emph{normal}\footnote{
	Note this definition differs from \cite{MR3614028}. Since all labelled spaces considered in this paper are weakly left-resolving, we include `weakly left-resolving' in the
	definition of a  normal labelled space. }. A non-empty set $A\in\acf$ is called \emph{regular} if for all $\emptyset\neq B\scj A$, we have that $0<|\lbf(B\dgraph^1)|<\infty$. The subset of all regular element of $\acf$ together with the empty set is denoted by $\acf_{reg}$.

For $\alpha\in\awstar$, define \[\acfra=\acf\cap\powerset{r(\alpha)}=\{A\in\acf\mid A\scj r(\alpha)\}.\] If a labelled space is normal, then  $\acfra$ is a Boolean algebra for each $\alpha\in\awstar$.

\subsection{The inverse semigroup of a labelled space}

For a weakly left-resolving labelled space $\lspace$, consider the set \[S=\{(\alpha,A,\beta)\mid \alpha,\beta\in\awstar\ \mbox{and}\ A\in\acfrg{\alpha}\cap\acfrg{\beta}\ \mbox{with}\ A\neq\emptyset\}\cup\{0\}.\]
Define a  binary operation on $S$  as follows: $s\cdot 0= 0\cdot s=0$ for all $s\in S$ and, if $s=(\alpha,A,\beta)$ and $t=(\gamma,B,\delta)$ are in $S$, then \[s\cdot t=\left\{\begin{array}{ll}
(\alpha\gamma ',r(A,\gamma ')\cap B,\delta), & \mbox{if}\ \  \gamma=\beta\gamma '\ \mbox{and}\ r(A,\gamma ')\cap B\neq\emptyset,\\
(\alpha,A\cap r(B,\beta '),\delta\beta '), & \mbox{if}\ \  \beta=\gamma\beta '\ \mbox{and}\ A\cap r(B,\beta ')\neq\emptyset,\\
0, & \mbox{otherwise}.
\end{array}\right. \]
For  $s=(\alpha,A,\beta)\in S$, we define $s^*=(\beta,A,\alpha)$. 

The set $S$ endowed with the operations above is an inverse semigroup with zero element $0$ (\cite{BoavaDeCastroMortari1}, Proposition 3.4), whose semilattice of idempotents is \[E(S)=\{(\alpha, A, \alpha) \mid \alpha\in\awstar \ \mbox{and} \ A\in\acfra\}\cup\{0\}.\]

The natural order in the semilattice $E(S)$ is given by $p\leq q$ if only if $pq=p$. In our case, the order can described by noticing that if $p=(\alpha, A, \alpha)$ and $q=(\beta, B, \beta)$, then $p\leq q$ if and only if $\alpha=\beta\alpha'$ and $A\scj r(B,\alpha')$ (see \cite[ Proposition 4.1]{BoavaDeCastroMortari1}).

\subsection{Filters in \texorpdfstring{$E(S)$}{E(S)}}\label{subsection:filters.E(S)}
In what follows we need to describe of filters in $E(S)$, as well as the set of tight filters, denote by $\ftight$ and called the \emph{tight spectrum}. For the definition of tight filters, see \cite[Section 12]{MR2419901}.

Let $\alpha\in\overline{\awleinf}$  and $\{\ftg{F}_n\}_{0\leq n\leq|\alpha|}$ (understanding that ${0\leq n\leq|\alpha|}$ means $0\leq n<\infty$ when $\alpha\in\overline{\awinf}$) be a family such that $\ftg{F}_n$ is a filter in $\acfrg{\alpha_{1,n}}$ for every $n>0$, and $\ftg{F}_0$ is either a filter in $\acf$ or $\ftg{F}_0=\emptyset$. The family $\{\ftg{F}_n\}_{0\leq n\leq|\alpha|}$ is  a  \emph{complete family for} $\alpha$ if
\[\ftg{F}_n = \{A\in \acfrg{\alpha_{1,n}} \mid r(A,\alpha_{n+1})\in\ftg{F}_{n+1}\}\]
for all $n\geq0$.

\begin{theorem}\cite[Theorem 4.13]{BoavaDeCastroMortari1}\label{thm.filters.in.E(S)}
	Let $\lspace$ be a weakly left-resolving labelled space and $S$   its associated inverse semigroup. Then there is a bijective correspondence between filters in $E(S)$ and pairs $(\alpha, \{\ftg{F}_n\}_{0\leq n\leq|\alpha|})$, where $\alpha\in\overline{\awleinf}$ and $\{\ftg{F}_n\}_{0\leq n\leq|\alpha|}$ is a complete family for $\alpha$.
\end{theorem}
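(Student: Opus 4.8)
The plan is to construct the bijection directly by producing two maps and showing they are mutually inverse. Given a filter $\xi$ in $E(S)$, I first need to extract the underlying labelled path $\alpha\in\overline{\awleinf}$ and the family $\{\ftg{F}_n\}$. The key observation is that each idempotent $(\gamma, A, \gamma)\in\xi$ carries a word $\gamma$, and the filter structure should force these words to be pairwise comparable: if $(\gamma, A,\gamma)$ and $(\delta, B,\delta)$ both lie in $\xi$, then since $\xi$ is a filter there is a common lower bound $(\eta, C,\eta)\leq$ both, and by the order description $\eta=\gamma\gamma'=\delta\delta'$, so $\gamma$ and $\delta$ are comparable. Thus the words appearing in $\xi$ form a chain, and I define $\alpha$ to be their supremum in $\overline{\awleinf}$ (taking the limiting infinite word when the lengths are unbounded). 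For each $n$ with $0\le n\le|\alpha|$, I then set $\ftg{F}_n=\{A\in\acfrg{\alpha_{1,n}} \mid (\alpha_{1,n}, A, \alpha_{1,n})\in\xi\}$, with the convention that $\ftg{F}_0=\emptyset$ exactly when no idempotent of the form $(\eword, A, \eword)$ appears.

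**Checking the forward map lands in complete families.**
Next I would verify that each $\ftg{F}_n$ is a filter in $\acfrg{\alpha_{1,n}}$ (upward closure and closure under intersection follow from the filter axioms for $\xi$ together with the order description and the product formula in $S$), and that the family is complete for $\alpha$. Completeness is the identity $\ftg{F}_n=\{A\in\acfrg{\alpha_{1,n}}\mid r(A,\alpha_{n+1})\in\ftg{F}_{n+1}\}$. The inclusion relating level $n$ to level $n+1$ should come from unwinding the order: $(\alpha_{1,n+1}, r(A,\alpha_{n+1}), \alpha_{1,n+1})\le(\alpha_{1,n}, A, \alpha_{1,n})$ by the stated criterion, so membership at level $n$ forces the relative range into level $n+1$; the reverse inclusion uses that $\xi$ is a filter, hence directed, to produce from an element at level $n+1$ a comparable element at level $n$ inside $\xi$. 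For the inverse map, given $(\alpha,\{\ftg{F}_n\})$ I define \[\xi=\{(\alpha_{1,n}, A, \alpha_{1,n})\mid 0\le n\le|\alpha|,\ A\in\ftg{F}_n\}\] and check it is a filter using completeness to guarantee directedness across levels and upward closure across the whole of $E(S)$.

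**The main obstacle.**
I expect the delicate point to be the upward closure of the reconstructed $\xi$ with respect to \emph{all} of $E(S)$, not merely the idempotents already sharing a beginning of $\alpha$. If $(\beta, B, \beta)\ge(\alpha_{1,n}, A, \alpha_{1,n})$ with $A\in\ftg{F}_n$, the order description gives $\alpha_{1,n}=\beta\beta'$ and $A\scj r(B,\beta')$; I must then show $(\beta, B, \beta)\in\xi$, i.e. that $B\in\ftg{F}_{|\beta|}$ where $|\beta|\le n$. This is precisely where completeness is essential: applying the complete-family identity repeatedly from level $n$ down to level $|\beta|$ shows that $r(A,\cdot)$-type conditions propagate $A\in\ftg{F}_n$ back to $B\in\ftg{F}_{|\beta|}$, using the filter (upward-closure) property of each $\ftg{F}_m$ along the way. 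Verifying that the two constructions are mutually inverse is then bookkeeping: starting from a filter $\xi$, the word extracted is the supremum of its chain of words and the recovered levels return exactly the idempotents of $\xi$; starting from a pair, completeness ensures no levels are lost or added. Throughout, I would lean on the order characterization and the product formula recorded above and in \cite[Propositions 4.1]{BoavaDeCastroMortari1}, which do the real structural work.
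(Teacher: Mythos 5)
A preliminary remark: the paper itself contains no proof of this statement --- it is imported verbatim from \cite[Theorem~4.13]{BoavaDeCastroMortari1} --- so there is no internal argument to compare yours against. Judged on its own terms, and against the strategy of the cited reference (which likewise builds the two maps directly and checks they are mutually inverse), your outline is the natural and essentially standard proof: comparability of the words occurring in a filter via directedness, extraction of the level sets $\ftg{F}_n$, verification that they form a complete family, reconstruction of a filter from a pair, and correct identification of upward closure of the reconstructed set in all of $E(S)$ as the delicate point, resolved by iterating the completeness identity.

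There is, however, one step whose written justification does not go through, although the toolkit you invoke elsewhere suffices to repair it. In checking completeness of the extracted family, you derive the inclusion ``$A\in\ftg{F}_n \Rightarrow r(A,\alpha_{n+1})\in\ftg{F}_{n+1}$'' from the order relation $(\alpha_{1,n+1}, r(A,\alpha_{n+1}), \alpha_{1,n+1})\le(\alpha_{1,n}, A, \alpha_{1,n})$ alone. Filters are upward closed, not downward closed, so this relation by itself gives nothing: membership of the larger element in $\xi$ does not force membership of the smaller one. The correct argument is to pick any $B\in\ftg{F}_{n+1}$ (each level $0<m\le|\alpha|$ is populated, e.g.\ $r(\alpha_{1,m})\in\ftg{F}_m$ by upward closure of $\xi$), form the meet
\[(\alpha_{1,n}, A, \alpha_{1,n})\cdot(\alpha_{1,n+1}, B, \alpha_{1,n+1}) = (\alpha_{1,n+1},\, r(A,\alpha_{n+1})\cap B,\, \alpha_{1,n+1}),\]
which lies in $\xi$ because filters are closed under meets, and then apply upward closure of $\ftg{F}_{n+1}$ inside $\acfrg{\alpha_{1,n+1}}$. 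Conversely, the inclusion you attribute to directedness, namely ``$r(A,\alpha_{n+1})\in\ftg{F}_{n+1} \Rightarrow A\in\ftg{F}_n$'', is pure upward closure of $\xi$ via that same order relation. In short, your two justifications are swapped; once they are interchanged and the meet computation is made explicit, the argument is complete.
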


Filters are of \emph{finite type} if they are associated with pairs $(\alpha, \{\ftg{F}_n\}_{0\leq n\leq|\alpha|})$ for which $|\alpha|<\infty$, and of \emph{infinite type} otherwise.

A filter $\xi$ in $E(S)$ with associated labelled path $\alpha\in\overline{\awleinf}$ is sometimes denoted by $\xia$ to stress the word $\alpha$; in addition, the filters in the complete family associated with $\xia$ will be denoted by $\xi^{\alpha}_n$ (or simply $\xi_n$). Specifically,
\begin{align}\label{eq.defines.xi_n}
\xi^{\alpha}_n=\{A\in\acf \mid (\alpha_{1,n},A,\alpha_{1,n}) \in \xia\}.
\end{align}

\begin{remark}\label{remark.when.in.xialpha}
	It follows from \cite[Propositions 4.4 and 4.8]{BoavaDeCastroMortari1} that for a filter $\xi^\alpha$ in $E(S)$ and an element $(\beta,A,\beta)\in E(S)$ we have that $(\beta,A,\beta)\in\xi^{\alpha}$ if and only if $\beta$ is a beginning of $\alpha$ and $A\in\xi_{|\beta|}$.
\end{remark}

\begin{theorem}[\cite{BoavaDeCastroMortari1}, Theorems 5.10 and 6.7] \label{thm:TightFiltersType}
	Let $\lspace$ be a normal labelled space and $S$  its associated inverse semigroup. Then the tight filters in $E(S)$ are:
	\begin{enumerate}[(i)]
		\item The filters of infinite type for which the non-empty elements of their associated complete families are ultrafilters. 
		\item The filters of finite type $\xia$ such that $\xi_{|\alpha|}$ is an ultrafilter in $\acfra$ and for each  $A\in\xi_{|\alpha|}$ at least one of the following conditions hold:
		\begin{enumerate}[(a)]
			\item $\lbf(A\dgraph^1)$ is infinite.
			\item There exists $B\in\acfra$ such that $\emptyset\neq B\scj A\cap \dgraph^0_{sink}$.
		\end{enumerate}
	\end{enumerate}
\end{theorem}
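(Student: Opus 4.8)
The plan is to derive the statement from Exel's combinatorial description of the tight spectrum together with the explicit order on $E(S)$ and the parametrization of filters in Theorem~\ref{thm.filters.in.E(S)}. Recall from \cite[Section 12]{MR2419901} that a filter $\xi$ in $E(S)$ is \emph{tight} if and only if, for every $e\in\xi$ and every finite set $Z\scj\{f\in E(S)\mid f\leq e\}$ that \emph{covers} $e$ (meaning each nonzero $g\leq e$ has $g\wedge z\neq 0$ for some $z\in Z$), one has $Z\cap\xi\neq\emptyset$. Writing the filter as $\xia$ with complete family $\{\xi_n\}$, and using Remark~\ref{remark.when.in.xialpha} to decide when a given idempotent lies in $\xia$, I would isolate two families of covers that a tight filter is forced to meet and show that meeting all covers is equivalent to conditions (i) and (ii).

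For necessity I would use two basic covers. First, fix $n$ with $\xi_n\neq\emptyset$, choose $C\in\xi_n$ so that $e=(\alpha_{1,n},C,\alpha_{1,n})\in\xia$, and let $A\in\acfrg{\alpha_{1,n}}$ be arbitrary. Since the space is normal, $\acfrg{\alpha_{1,n}}$ is a Boolean algebra, so $A\cap C$ and $C\setminus A$ lie in it and partition $C$; using that relative ranges respect unions, one checks that $Z=\{(\alpha_{1,n},A\cap C,\alpha_{1,n}),(\alpha_{1,n},C\setminus A,\alpha_{1,n})\}$ covers $e$. Tightness then forces $A\cap C\in\xi_n$ or $C\setminus A\in\xi_n$, that is $A\in\xi_n$ or its complement is, so $\xi_n$ is an ultrafilter in $\acfrg{\alpha_{1,n}}$; this yields the ultrafilter requirements in both (i) and (ii). Second, for a finite-type filter with $|\alpha|=n$ and a fixed $A\in\xi_n$, suppose that $\lbf(A\dgraph^1)=\{a_1,\dots,a_k\}$ is finite and that there is no $B\in\acfra$ with $\emptyset\neq B\scj A\cap\dgraph^0_{sink}$. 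Then $Z=\{(\alpha a_i,r(A,a_i),\alpha a_i)\mid 1\leq i\leq k\}$ covers $(\alpha,A,\alpha)$: a nonzero element below it that stays at level $\alpha$, say $(\alpha,B,\alpha)$ with $\emptyset\neq B\scj A$, cannot be contained in $\dgraph^0_{sink}$, so $r(B,a_i)\neq\emptyset$ for some $i$ and the meet with the $i$-th member of $Z$ is nonzero; an element strictly extending $\alpha$ meets the member carrying its first label. As $\xia$ contains no idempotent whose word strictly extends $\alpha$, we would obtain $Z\cap\xia=\emptyset$, contradicting tightness. Hence a tight finite-type filter satisfies (a) or (b) for every $A\in\xi_{|\alpha|}$, while the complement cover alone gives the ultrafilter conditions in (i) for infinite type.

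For sufficiency I would verify that the filters of the two listed forms meet every finite cover, and this is the part I expect to be the main obstacle. The key is a reduction lemma classifying arbitrary finite covers of an idempotent $(\alpha,A,\alpha)$: exploiting weak left-resolvingness (so that $r(A\cap B,\gamma)=r(A,\gamma)\cap r(B,\gamma)$) and normality (to take complements inside $\acfrg{\alpha}$), one should be able to split any cover into a Boolean part living at level $\alpha$ and parts obtained by extending along labels, thereby reducing a general cover to finitely many instances of the two basic covers above. Granting this, tightness follows: at each level the ultrafilter property of $\xi_n$ settles the Boolean part, and when a cover is forced to pass to a deeper level, condition (a) (infinitely many labels, so no finite cover can avoid level $\alpha$), condition (b) (a sink piece no extension reaches), or, in the infinite-type case, the presence of arbitrarily long beginnings in $\xia$, guarantees that $\xia$ already contains a member of the cover. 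An alternative to the direct verification is to identify the listed filters as exactly the closure of the set of ultrafilters in the filter topology, but this requires the same analysis of covers and of which finite-type filters are maximal, so the cover bookkeeping would have to be carried out in either case.
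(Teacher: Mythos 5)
First, a point of reference: the paper does not prove this statement at all --- it is imported verbatim from \cite{BoavaDeCastroMortari1} (Theorems 5.10 and 6.7) as a preliminary. So there is no paper-internal argument to compare against, and your proposal has to stand on its own as a proof. Its necessity half does: the two covers you exhibit are genuine covers (the Boolean cover $\{(\alpha_{1,n},A\cap C,\alpha_{1,n}),(\alpha_{1,n},C\setminus A,\alpha_{1,n})\}$, and, for finite type under the negation of (a) and (b), the cover $\{(\alpha a_i,r(A,a_i),\alpha a_i)\}$ which a finite-type filter cannot meet), and together with the cover-closed characterization of tight filters this correctly forces the ultrafilter property and the dichotomy (a)/(b). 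That part is fine.

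The genuine gap is the sufficiency direction, which you yourself flag as ``the main obstacle'' and then assume: the ``reduction lemma'' splitting an arbitrary finite cover of $(\alpha,A,\alpha)$ into instances of your two basic covers is never stated precisely, let alone proved, and covers can mix elements at many incomparable words and many depths, so it is not clear such a clean splitting exists. Without it you have shown only that tight filters satisfy (i)/(ii), not that filters satisfying (i)/(ii) are tight. Note that the reduction is also not the natural route: what one actually needs, given $e=(\alpha_{1,n},C,\alpha_{1,n})\in\xi$ and a finite cover $Z$ with $Z\cap\xi=\emptyset$, is to \emph{construct a single nonzero witness} $g\leq e$ orthogonal to every member of $Z$, contradicting the cover property. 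The ingredients are exactly the ones missing from your sketch: (1) for disjoint $X,Y\in\acf$, weak left-resolvingness gives $r(X,\gamma)\cap r(Y,\gamma)=r(X\cap Y,\gamma)=\emptyset$, so replacing each $B_z\notin\xi_{m_z}$ (for $z$ whose word runs along $\alpha$) by its complement $C_z\in\xi_{m_z}$ and intersecting the relative ranges of the $C_z$'s inside $\xi_N$ (infinite type, $N$ beyond all words of $Z$) or inside $\xi_{|\alpha|}$ (finite type) produces a set disjoint from everything $Z$ reaches along $\alpha$; here you also need that primality of the ultrafilter $\xi_{|\alpha|}$ forces all lower $\xi_m$ to be ultrafilters, which is used but not observed; (2) in the finite-type case one must still kill the members of $Z$ whose words strictly extend $\alpha$, and this is precisely where (a)/(b) enter: under (b) shrink the witness to a set of sinks, under (a) extend it by a letter of $\lbf(D\dgraph^1)$ avoiding the finitely many first letters occurring in $Z$. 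None of this bookkeeping is in your proposal, so as written it establishes only half of the theorem.
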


\section{Leavitt labelled path algebras}\label{section:leavitt.labelled.path.algebras}

In this section, we define an algebra associated with a normal labelled space and we prove some of its initial properties. In later sections, we give different descriptions of these algebras and we prove that they generalize Leavitt path algebras.

\begin{definition}\label{def:LPA}
Let $\lspace$ be a normal labelled space and $R$ a unital commutative ring. The \emph{Leavitt labelled path algebra associated with $\lspace$ with coefficients in $R$}, denoted by $L_R\lspace$, is the universal $R$-algebra with generators $\{p_A \mid A\in \acf\}$ and $\{s_a,s_a^* \mid a\in\alf\}$ subject to the relations
\begin{enumerate}[(i)]
	\item $p_{A\cap B}=p_Ap_B$, $p_{A\cup B}=p_A+p_B-p_{A\cap B}$ and $p_{\emptyset}=0$, for every $A,B\in\acf$;
	\item $p_As_a=s_ap_{r(A,a)}$ and $s_a^{*}p_A=p_{r(A,a)}s_a^{*}$, for every $A\in\acf$ and $a\in\alf$;
	\item $s_a^*s_a=p_{r(a)}$ and $s_b^*s_a=0$ if $b\neq a$, for every $a,b\in\alf$;
	\item $s_as_a^*s_a=s_a$ and $s_a^{*}s_as_a^{*}=s_a^{*}$ for every $a\in\alf$;
	\item For every $A\in\acf_{reg}$,
	\[p_A=\sum_{a\in\lbf(A\dgraph^1)}s_ap_{r(A,a)}s_a^*.\]
\end{enumerate}
\end{definition}

When the labelled space and the ring are clear in a given context, we call $L_R\lspace$ a Leavitt labelled path algebra.

For each word $\alpha=a_1a_2\cdots a_n$, define $s_\alpha=s_{a_1}\cdots s_{a_n}$ and $s_\alpha^{*}=s_{a_n}^{*}\cdots s_{a_1}^{*}$. Although $L_R\lspace$ is not necessarily unital, we also set $s_{\eword}=s_{\eword}^*=1$, where $\eword$ is the empty word. This is done only in order to simplify certain statements. For example, $s_{\eword}p_{A}s_{\eword}^*$ means $p_{A}$. We never use $s_{\eword}$ by itself. 

Fix a normal labelled space $\lspace$ with and a unital ring $R$.
\begin{proposition}\label{prop:properties.of.c*-labelled.space}
	The following properties are valid in $L_R\lspace$.
	\begin{enumerate}[(i)]
		\item If $\alpha\notin\awstar$, then $s_{\alpha}=0$.
		\item $p_As_\alpha=s_\alpha p_{r(A,\alpha)}$ and $s_\alpha^{*}p_A= p_{r(A,\alpha)}s_\alpha^{*}$, for every $A\in\acf$ and $\alpha\in\awstar$.
		\item $s_\alpha^*s_\alpha=p_{r(\alpha)}$ and $s_\beta^*s_\alpha=0$ if $\beta$ and $\alpha$ are not comparable, for every $\alpha,\beta\in\awplus$.
		\item For every $\alpha\in\awplus$, $s_{\alpha}s_{\alpha}^{*}s_{\alpha}=s_{\alpha}$ and $s_{\alpha}^{*}s_{\alpha}s_{\alpha}^{*}=s_{\alpha}^{*}$
		\item Let $\alpha,\beta\in\awstar$ and $A\in\acf$. If $s_\alpha p_A s_\beta^*\neq 0$, then $A\cap r(\alpha)\cap r(\beta)\neq\emptyset$ and $s_\alpha p_A s_\beta^* = s_\alpha p_{A\cap r(\alpha)\cap r(\beta)} s_\beta^*$. 
		\item Let $\alpha,\beta,\gamma,\delta\in\awstar$, $A\in\acfrg{\alpha}\cap\acfrg{\beta}$ and $B\in\acfrg{\gamma}\cap\acfrg{\delta}$. Then
		\[(s_\alpha p_A s_\beta^*)(s_\gamma p_B s_\delta^*) = \left\{ 
		\begin{array}{ll}
		s_{\alpha\gamma'}p_{r(A,\gamma')\cap B}s_{\delta}^*, & \mbox{if} \ \gamma=\beta\gamma', \\
		s_{\alpha}p_{A\cap r(B,\beta')}s_{\delta\beta'}^*, & \mbox{if} \ \beta=\gamma\beta', \\
		0, & \mbox{otherwise}. 
		\end{array}\right.\]
		\item Every non-zero finite product of terms of types $s_a$, $p_B$ and $s_b^*$ can be written as $s_\alpha p_A s_\beta^*$, where $A\in \acfrg{\alpha}\cap\acfrg{\beta}$.
		\item $L_R\lspace = \vecspan_R\{s_\alpha p_A s_\beta^* \mid \alpha,\beta\in\awstar \ \mbox{and} \ A\in\acfrg{\alpha}\cap\acfrg{\beta}\}$.
		\item Elements of the form $s_\alpha p_A s_\alpha^*$, where $\alpha\in\awstar$, are commuting idempotents. Furthermore,
		\[(s_\alpha p_A s_\alpha^*)(s_\beta p_B s_\beta^*) = \left\{ 
		\begin{array}{ll}
		s_{\beta}p_{r(A,\beta')\cap B}s_{\beta}^*, & \mbox{if} \ \beta=\alpha\beta', \\
		s_{\alpha}p_{A\cap r(B,\alpha')}s_{\alpha}^*, & \mbox{if} \ \alpha=\beta\alpha', \\
		0, & \mbox{otherwise}. 
		\end{array}\right.\]
	\end{enumerate}
\end{proposition}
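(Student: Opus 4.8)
The plan is to derive all nine items from the defining relations (i)--(v) of Definition~\ref{def:LPA} together with the composition identity for relative ranges, $r(A,\alpha\beta)=r(r(A,\alpha),\beta)$, which I will use without further comment. As an organizing device I would first record that the assignment $p_A\mapsto p_A$, $s_a\mapsto s_a^*$, $s_a^*\mapsto s_a$ extends to an $R$-linear \emph{anti-automorphism} $\phi$ of $L_R\lspace$ with $\phi^2=\operatorname{id}$; this is checked by observing that $\phi$ carries each defining relation to a defining relation (the two equations in (ii) are interchanged, likewise those in (iv), while (i), (iii) and (v) are visibly self-dual). Every ``starred'' identity below is then obtained for free from its unstarred companion, which roughly halves the work.

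The backbone of the argument is the identity $s_\alpha=s_\alpha p_{r(\alpha)}$, valid for every word $\alpha$, which I would prove by induction on $|\alpha|$: relations (iv) and (iii) give $s_a=s_as_a^*s_a=s_ap_{r(a)}$, and relation (ii) then pushes $p_{r(\alpha')}$ through the last letter via the range-composition identity. Item (i) drops out at once, since as soon as an initial segment $\alpha'$ of $\alpha$ fails to lie in $\awstar$ we have $r(\alpha')=\emptyset$, whence $s_{\alpha'}=s_{\alpha'}p_\emptyset=0$ and so $s_\alpha=0$. Item (ii) is the same induction carried out with a general $A\in\acf$ in place of $\dgraph^0$, its dual supplied by $\phi$. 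For item (iii) I would prove $s_\alpha^*s_\alpha=p_{r(\alpha)}$ by peeling off one letter at a time, using $s_a^*s_a=p_{r(a)}$ and (ii); if $\beta$ and $\alpha$ are not comparable, cancelling their common beginning exposes a factor $s_b^*s_a$ with $b\neq a$, which vanishes by relation (iii). Item (iv) is then immediate from $s_\alpha s_\alpha^*s_\alpha=s_\alpha p_{r(\alpha)}=s_\alpha$, with its dual from $\phi$.

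The computational heart is item (vi), with (v) as a warm-up. For (v), the backbone identity and its $\phi$-dual give $s_\alpha p_A=s_\alpha p_{A\cap r(\alpha)}$ and $p_As_\beta^*=p_{A\cap r(\beta)}s_\beta^*$, so $s_\alpha p_As_\beta^*=s_\alpha p_{A\cap r(\alpha)\cap r(\beta)}s_\beta^*$ (using closure of $\acf$ under intersection), and non-vanishing forces $A\cap r(\alpha)\cap r(\beta)\neq\emptyset$ because $p_\emptyset=0$. For (vi) everything hinges on the middle factor $s_\beta^*s_\gamma$: by (iii) it is zero unless $\beta,\gamma$ are comparable, and when $\gamma=\beta\gamma'$ one has $s_\beta^*s_\gamma=p_{r(\beta)}s_{\gamma'}$; absorbing $p_{r(\beta)}$ into $p_A$ (as $A\subseteq r(\beta)$), commuting $p_A$ past $s_{\gamma'}$ by (ii), and regrouping produces $s_{\alpha\gamma'}p_{r(A,\gamma')\cap B}s_\delta^*$. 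The case $\beta=\gamma\beta'$ is the $\phi$-dual and the remaining case gives $0$. I expect this to be the main obstacle, not for any conceptual depth but because it requires careful bookkeeping of the subscripts and constant attention to keeping every intersection and relative range inside the accommodating family $\acf$.

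The last three items are formal consequences of (vi). For (vii) I would induct on the number of factors: each generator is already in normal form, namely $s_a=s_ap_{r(a)}s_\eword^*$, $p_B=s_\eword p_Bs_\eword^*$ and $s_b^*=s_\eword p_{r(b)}s_b^*$, and (vi) shows that multiplying a normal-form element by one more generator yields either $0$ or again a single term $s_\alpha p_As_\beta^*$ with $A\in\acfrg{\alpha}\cap\acfrg{\beta}$. Item (viii) follows immediately, since $L_R\lspace$ is the $R$-span of products of generators: each such product is $0$ or a normal form by (vii), while (vi) shows the span of normal forms is already a subalgebra, so the two coincide. Finally, item (ix) is the specialization of (vi) to two terms $s_\alpha p_As_\alpha^*$ and $s_\beta p_Bs_\beta^*$, from which the two comparable cases are read off directly; idempotency is the degenerate case $\beta=\alpha$, $B=A$ (using $p_Ap_{r(\alpha)}p_A=p_A$), and commutativity is visible from the symmetry of the resulting formula under interchanging $(\alpha,A)$ and $(\beta,B)$.
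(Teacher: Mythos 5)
Your proposal is correct, and every step I checked goes through: the involutive anti-automorphism $\phi$ is legitimately defined by the universal property (it permutes the defining relations — the two equations in (ii) and in (iv) are swapped, while (i), (iii), (v) are fixed), the backbone identity $s_\alpha=s_\alpha p_{r(\alpha)}$ follows by the induction you describe (noting $r(\alpha')\in\acf$ for $\alpha'\in\awplus$ and $r(\alpha')=\emptyset\in\acf$ otherwise, by normality), and the case analyses in (iii), (vi), (vii), (ix) are handled with the right bookkeeping, including the absorption $p_Ap_{r(\beta)}=p_A$ coming from the hypothesis $A\in\acfrg{\beta}$. The comparison with the paper is somewhat lopsided: the paper offers no argument at all, saying only that the proof "follows the same line of reasoning as its C*-algebraic counterpart" in the reference it cites, where the computations are of exactly the inductive, letter-by-letter kind you carry out. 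What your write-up adds — and it is a genuine addition rather than a cosmetic one — is the anti-automorphism $\phi$: in the C*-setting the starred identities are free, being adjoints of the unstarred ones, but $L_R\lspace$ carries no involution, so invoking "the same reasoning" silently requires either reproving each starred identity by hand or exhibiting precisely the symmetry you construct. Your $\phi$ is the algebraic substitute for the C*-adjoint, it halves the computations, and it justifies the transfer that the paper's one-line citation takes for granted; in that sense your proof is both self-contained and slightly more structured than what the paper points to.
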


\begin{proof}
The proof follows the same line of reasoning as its C*-algebraic counterpart; see \cite[Proposition 3.4]{BoavaDeCastroMortari2}. 
\end{proof}

\begin{definition}
    The \textit{diagonal subalgebra} of $L_R\lspace$ is the  $R$-algebra given by
    \[D(L_R\lspace)=\vecspan_R\{s_{\alpha}p_As^*_{\alpha} \mid (\alpha, A, \alpha)\in E(S) \}.\]
\end{definition}

The following lemmas are standard and used throughout the paper.

\begin{lemma}\label{disjoint}
If $\{B_1,\ldots, B_n\}$ is a family of elements in an algebra of sets $\acf$, then there exists a family of pairwise disjoint sets $F=\{C_1,\ldots,C_m\}$ contained in $\acf$ such that $\bigcup B_j=\bigcup C_i$ and for every $j=1,\ldots,n$ there is a subset $E_j\scj F$ such that $B_j=\bigcup_{C\in E_j}C$.
\end{lemma}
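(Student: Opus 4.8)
The plan is to prove the statement about disjointification in a Boolean algebra (algebra of sets) by induction on $n$, the number of sets in the family. This is a purely set-theoretic/combinatorial fact, so no structure of the labelled space is needed; I only use that $\acf$ is closed under finite intersections, finite unions, and (since the labelled space is normal) relative complements, hence is a genuine algebra of sets.

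\textbf{Base case and inductive step.} For $n=1$ the family $F=\{B_1\}$ works trivially (take $C_1=B_1$, or discard it if $B_1=\emptyset$). For the inductive step, suppose the result holds for families of size $n-1$, and let $\{B_1,\ldots,B_n\}$ be given. First I apply the inductive hypothesis to $\{B_1,\ldots,B_{n-1}\}$, obtaining a pairwise disjoint family $F'=\{C_1',\ldots,C_k'\}\scj\acf$ with $\bigcup_{j<n}B_j=\bigcup_i C_i'$ and, for each $j<n$, a subset $E_j'\scj F'$ with $B_j=\bigcup_{C\in E_j'}C$. Now I must fold in $B_n$. The idea is to split each atom $C_i'$ along $B_n$: replace $C_i'$ by the two disjoint pieces $C_i'\cap B_n$ and $C_i'\setminus B_n$ (both in $\acf$ by closure under intersection and relative complement), discarding whichever pieces are empty. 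Finally I adjoin the single remaining piece $B_n\setminus\bigcup_i C_i'$ (again in $\acf$), if nonempty. Call the resulting collection $F$.

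\textbf{Verifying the three conclusions.} The sets in $F$ are pairwise disjoint: two pieces coming from distinct $C_i',C_{i'}'$ are disjoint because the $C_i'$ were; the two pieces $C_i'\cap B_n$ and $C_i'\setminus B_n$ are disjoint by construction; and the extra piece $B_n\setminus\bigcup_i C_i'$ is disjoint from everything inside $\bigcup_i C_i'$. For the union, $\bigcup F = \left(\bigcup_i C_i'\right)\cup B_n = \left(\bigcup_{j<n}B_j\right)\cup B_n = \bigcup_{j\le n}B_j$. For the representability of each original set: given $j<n$, since $B_j=\bigcup_{C\in E_j'}C$ is a union of old atoms, and each old atom $C$ is now the disjoint union of its (nonempty) pieces $C\cap B_n$ and $C\setminus B_n$, I can write $B_j$ as the union of exactly those pieces — so $E_j$ is the set of pieces arising from atoms in $E_j'$. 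For $j=n$, note $B_n=\left(B_n\cap\bigcup_i C_i'\right)\cup\left(B_n\setminus\bigcup_i C_i'\right)=\bigcup_i(C_i'\cap B_n)\,\cup\,(B_n\setminus\bigcup_i C_i')$, which is precisely the union of all the ``$\cap B_n$'' pieces together with the adjoined piece; hence $E_n$ consists of those sets.

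\textbf{Main obstacle.} There is no deep obstacle here — the argument is routine bookkeeping — but the one point requiring care is to keep the empty sets out of $F$ while still ensuring each $B_j$ (including possibly empty ones among the $B_j$) is represented as $\bigcup_{C\in E_j}C$; for an empty $B_j$ one simply takes $E_j=\emptyset$, and one must check that discarding empty pieces does not break the union or representation claims, which it does not since empty sets contribute nothing to any union. I would remark that this lemma holds verbatim in any Boolean algebra and is independent of the labelled-space framework.
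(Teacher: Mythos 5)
Your proof is correct. Note that the paper offers no proof of this lemma at all: it is introduced with the phrase ``the following lemmas are standard,'' so there is no argument in the paper to diverge from, and your induction supplies a complete justification of a fact the authors take for granted. Two remarks. First, the more common one-shot argument takes as the disjoint family all nonempty sets of the form $B_1^{\epsilon_1}\cap\cdots\cap B_n^{\epsilon_n}$, where $\epsilon_j\in\{0,1\}$, $B_j^1=B_j$ and $B_j^0=\bigl(\bigcup_k B_k\bigr)\setminus B_j$, excluding the choice $\epsilon_1=\cdots=\epsilon_n=0$; each $B_j$ is then the union of the atoms with $\epsilon_j=1$. Your inductive refinement (splitting each existing atom along $B_n$ and adjoining $B_n\setminus\bigcup_i C_i'$) builds exactly this partition one set at a time, so the two routes are essentially equivalent; yours has the minor advantage of making the bookkeeping explicit, the direct one of being shorter. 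Second, you are right to stress that only closure under finite intersections, finite unions and \emph{relative} complements is used: since the accommodating family $\acf$ of a normal labelled space need not contain a top element, it is a ring (generalized Boolean algebra) of sets rather than an algebra of sets in the strict sense, and both your argument and the atom construction above (where $B_j^0$ is a complement relative to $\bigcup_k B_k$, not an absolute complement) are valid in that generality, which is precisely what the paper needs. Your handling of the edge cases, discarding empty pieces and representing an empty $B_j$ by $E_j=\emptyset$, is also correct.
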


\begin{lemma}\label{l:orthogonalhspan}
If $x\in \vecspan_R\{p_A \mid A\in\acf\}\scj L_R\lspace$ is written as
\[x=\sum_{i=1}^nr_ip_{A_i},\] for $A_i\in\acf$ and $r_i\in R$, $i=1,\ldots,n$, then there exists a family $\{C_1,\ldots,C_m\}$ of pairwise disjoint elements of $\acf$ and $s_1,\ldots,s_m\in R$ such that
\[x=\sum_{j=1}^ms_jp_{C_j},\]
and for all $j=1,\ldots,m$, there exists $i=1,\ldots,n$ such that $C_j\scj A_i$.
\end{lemma}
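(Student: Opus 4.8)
The plan is to convert the given representation of $x$ into one over a pairwise disjoint family by applying Lemma~\ref{disjoint} to the sets $\{A_1,\dots,A_n\}$. That lemma hands us a pairwise disjoint family $F=\{C_1,\dots,C_m\}\subseteq\acf$ with $\bigcup_j A_j=\bigcup_i C_i$ and, for each $j$, a subset $E_j\subseteq F$ with $A_j=\bigcup_{C\in E_j}C$. Since the $C$'s appearing in a given $E_j$ are pairwise disjoint, the relation $p_{A\cup B}=p_A+p_B-p_{A\cap B}$ together with $p_{C\cap C'}=p_Cp_{C'}=p_\emptyset=0$ for distinct disjoint sets gives $p_{A_j}=\sum_{C\in E_j}p_C$; the inclusion--exclusion cross terms all vanish because the $C$'s are disjoint, so this simplifies to a plain sum of the $p_C$.

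Substituting this into $x=\sum_{i=1}^n r_i p_{A_i}$ and collecting, I would define, for each index $k=1,\dots,m$, the scalar $s_k=\sum_{\{i\,:\,C_k\in E_i\}} r_i\in R$, so that
\[
x=\sum_{i=1}^n r_i\sum_{C\in E_i}p_C=\sum_{k=1}^m s_k\,p_{C_k}.
\]
This is exactly the claimed form, with $\{C_1,\dots,C_m\}$ pairwise disjoint. The final requirement, that each $C_k$ is contained in some $A_i$, is immediate: every $C_k$ lies in some $E_i$ (since the $C$'s cover $\bigcup_i A_i$ and are obtained from the decomposition, each $C_k$ appears in at least one $E_i$), and $C_k\in E_i$ means $C_k\subseteq\bigcup_{C\in E_i}C=A_i$.

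The only genuine point requiring care is justifying $p_{A_j}=\sum_{C\in E_j}p_C$, i.e.\ that the idempotent relations (i) in Definition~\ref{def:LPA} turn a finite disjoint union into a sum of the corresponding generators with no correction terms. I would verify this by a short induction on $|E_j|$: for two disjoint sets $C,C'$ we have $p_{C\cup C'}=p_C+p_{C'}-p_{C\cap C'}=p_C+p_{C'}-p_\emptyset=p_C+p_{C'}$, and the inductive step adds one more disjoint set using the same identity (its intersection with the union of the previous ones is empty, so again $p_\emptyset=0$). I do not anticipate any real obstacle here; the statement is a bookkeeping consequence of Lemma~\ref{disjoint} and the Boolean-algebra relations on the $p_A$, and the main thing to get right is that disjointness is what kills the inclusion--exclusion corrections.
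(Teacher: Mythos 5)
Your proof is correct and is exactly the argument the paper intends: the paper's own proof is the one-line remark that the lemma ``follows immediately from Lemma~\ref{disjoint} and (i) of Definition~\ref{def:LPA}'', and your write-up simply fills in those details (disjointification, the induction showing $p_{A_j}=\sum_{C\in E_j}p_C$, and collecting coefficients). The only point worth noting is that a $C_k$ belonging to no $E_i$ would have to be empty (being covered by, yet disjoint from, the other $C$'s), so it is harmlessly contained in any $A_i$; otherwise your containment argument is exactly right.
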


\begin{proof}
The proof follows immediately from Lemma~\ref{disjoint} and (i) of Definition~\ref{def:LPA}.
\end{proof}

\begin{definition}
A ring $R$ is \textit{$\mathbb{Z}$-graded} if there is a collection of additive subgroups $\{R_n\}_{n\in \mathbb{Z}}$ of $R$ such that 
    \begin{enumerate}
        \item $R=\bigoplus_{n\in \mathbb{Z}}R_n$, and 
        \item $R_mR_n \scj R_{m+n}$ for all $m,n\in \mathbb{Z}$.
    \end{enumerate}
The subgroup $R_n$ is called the \textit{homogeneous component of $R$ of degree n}. 
\end{definition}

\begin{definition}
If $R$ is a $\mathbb{Z}$-graded ring, then an ideal $I\scj R$ is a \textit{$\mathbb{Z}$-graded ideal} if $I=\bigoplus_{n\in \mathbb{Z}} (I\cap R_n)$. If $\phi:R\to S$ is a ring homomorphism between   $\mathbb{Z}$-graded rings, then $\phi$ is \textit{$\mathbb{Z}$-graded homomorphism} if $\phi(R_n)\scj  S_n$ for every $n\in \mathbb{Z}$.
\end{definition}

\begin{proposition}\label{prop:grading}
    The Leavitt labelled path algebra $L_R\lspace$ is $\zn$-graded, with grading given by
    \[L_R\lspace_n = \vecspan_R\{s_\alpha p_A s_\beta^* \mid \alpha,\beta\in\awstar,\ A\in\acfrg{\alpha}\cap\acfrg{\beta} \ \mbox{and} \ |\alpha|-|\beta|=n\}.\]
\end{proposition}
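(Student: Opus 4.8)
The plan is to exhibit the claimed homogeneous components $L_R\lspace_n$ and verify the two axioms of a $\Z$-grading. First I would define, for each $n\in\Z$, the additive subgroup $L_R\lspace_n$ as in the statement, and set $R_n := L_R\lspace_n$. By Proposition~\ref{prop:properties.of.c*-labelled.space}(viii), the whole algebra is the $R$-span of the spanning elements $s_\alpha p_A s_\beta^*$, and since every such element lands in $L_R\lspace_{|\alpha|-|\beta|}$, we immediately get $L_R\lspace=\sum_{n\in\Z}L_R\lspace_n$. The multiplicative compatibility $L_R\lspace_m\,L_R\lspace_n\scj L_R\lspace_{m+n}$ follows from the product formula in Proposition~\ref{prop:properties.of.c*-labelled.space}(vi): a product $(s_\alpha p_A s_\beta^*)(s_\gamma p_B s_\delta^*)$ is either $0$ or of the form $s_{\alpha\gamma'}p_{\cdots}s_\delta^*$ (when $\gamma=\beta\gamma'$) or $s_\alpha p_{\cdots}s_{\delta\beta'}^*$ (when $\beta=\gamma\beta'$); in the first case the degree is $|\alpha\gamma'|-|\delta|=(|\alpha|-|\beta|)+(|\gamma|-|\delta|)=m+n$, since $|\gamma'|=|\gamma|-|\beta|$, and symmetrically in the second case. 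Thus products of homogeneous generators have the correct degree, and bilinearity extends this to the subgroups.

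The genuinely nontrivial part is establishing that the sum is \emph{direct}, i.e.\ $L_R\lspace=\bigoplus_{n\in\Z}L_R\lspace_n$. The cleanest route is the universal-property argument standard for such algebras: I would construct, for the target, the $\Z$-graded $R$-algebra $A=\bigoplus_{n\in\Z}A_n$ freely generated (as a graded algebra) by symbols $P_A$ in degree $0$, $S_a$ in degree $1$, and $S_a^*$ in degree $-1$, modulo the relations (i)--(v) of Definition~\ref{def:LPA}. One checks that each defining relation is homogeneous: relation (i) involves only degree-$0$ terms; in (ii) both $p_As_a$ and $s_ap_{r(A,a)}$ have degree $1$ (and the starred version degree $-1$); in (iii) $s_a^*s_a$ and $p_{r(a)}$ both have degree $0$, and $s_b^*s_a$ has degree $0$; (iv) is homogeneous of degrees $1$ and $-1$; and in (v) every summand $s_ap_{r(A,a)}s_a^*$ has degree $0$, matching $p_A$. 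Because all relations are homogeneous, the two-sided ideal they generate is a graded ideal, so the quotient inherits a $\Z$-grading.

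Concretely I would define a grading homomorphism from the free $R$-algebra on the generators to $\bigoplus_n B_n$ for a suitable graded algebra $B$, or more efficiently invoke the universal property directly: assign degrees $\deg(p_A)=0$, $\deg(s_a)=1$, $\deg(s_a^*)=-1$, observe that relations (i)--(v) are homogeneous with respect to this assignment, and conclude that $L_R\lspace$ is $\Z$-graded with $L_R\lspace_n$ equal to the image of the degree-$n$ part of the free algebra. The spanning description of $L_R\lspace_n$ in the statement then follows by combining this with Proposition~\ref{prop:properties.of.c*-labelled.space}(viii): every element of $L_R\lspace$ is an $R$-combination of terms $s_\alpha p_A s_\beta^*$, each homogeneous of degree $|\alpha|-|\beta|$, so the degree-$n$ component is exactly the span of those with $|\alpha|-|\beta|=n$.

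The main obstacle is the directness of the sum, since the spanning and multiplicativity properties are essentially immediate from parts (viii) and (vi) of Proposition~\ref{prop:properties.of.c*-labelled.space}. Directness cannot be read off from a normal form for elements (there is no canonical form for the $s_\alpha p_A s_\beta^*$), so one must genuinely use the universal property to produce a well-defined degree map: the key verification is that \emph{each} defining relation in Definition~\ref{def:LPA} is homogeneous, which guarantees that the relation ideal is graded and hence that distinct homogeneous components meet trivially. I would therefore devote the bulk of the argument to checking homogeneity of relation~(v) and of the two relations in~(ii) and~(iii), as these mix $s_a$, $s_a^*$, and $p_A$ and are where a degree mismatch could a priori occur; once homogeneity of all relations is confirmed, the grading and the asserted description of $L_R\lspace_n$ follow formally.
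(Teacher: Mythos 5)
Your proposal is correct and follows essentially the same route as the paper: the paper's proof also grades the free $R$-algebra on the generators by assigning degree $0$ to $p_A$, degree $1$ to $s_a$, and degree $-1$ to $s_a^*$, observes that the ideal generated by relations (i)--(v) of Definition~\ref{def:LPA} is generated by homogeneous elements and hence is graded, and concludes that the quotient $L_R\lspace$ inherits the stated $\zn$-grading. Your additional explicit checks (homogeneity of each relation, and the spanning/multiplicativity via Proposition~\ref{prop:properties.of.c*-labelled.space}(vi) and (viii)) are exactly the details the paper leaves to the reader in its outline.
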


\begin{proof}

The proof is essentially the same as for Leavitt path algebras of graphs \cite[Corolary 2.1.5]{AbrAraMol}, and we therefore only provide an outline.

Let $D$ denote the free $R$-algebra generated by $\{p_A \mid A\in \acf \}\cup \{s_a,s_a^* \mid a\in\alf\}$. For  $a\in \alf$, define $l(s_a)=1$ and $l(s^*_a)=-1$. For $A\in \acf$ we define $l(p_A)=0$. For any monomial $rx_1\ldots x_n$, with $r\in R$ and $x_i\in \{p_A \mid A\in \acf \}\cup \{s_a,s_a^* \mid a\in\alf\}$, we let $l(rx_1\ldots x_n)=\sum_{i=1}^n l(x_i)$. Then $D$ has a natural $\mathbb{Z}$-grading by putting
\[D_n= \vecspan_R\{x_1\ldots x_n \mid x_i\in \{p_A \mid A\in \acf \}\cup \{s_a,s_a^* \mid a\in\alf\} \ \mbox{and} \ l(x_1\ldots x_n)=n \}\]
for $n\in\mathbb{Z}$.

Let $I\scj D$ be the ideal generated by relations (i)-(v) of Definition~\ref{def:LPA}.
Since $I$ is generated by homogeneous elements, it is a $\mathbb{Z}$-graded ideal. Hence, $L_R\lspace \cong D/I$ is $\mathbb{Z}$-graded, with the grading as stated above.
\end{proof}

\section{Partial skew group rings from labelled spaces}\label{section:partial.skew.group}

In this section, we recall from \cite{GillesDanie} the partial action associated with a labelled space $\lspace$ and show that there is a surjective homomorphism from $L_R\lspace$ to the partial skew ring arising from the aforementioned partial action.

Fix a weakly left-resolving labelled space $\lspace$. We describe the topology on $\ftight$, the tight spectrum of $\lspace$ as defined in Section~\ref{subsection:filters.E(S)}, inherited from the product topology on $\{0,1\}^{E(S)}$, where $\{0,1\}$ has the discrete topology. For each $e\in E(S)$, define
\begin{equation*} \label{dfn:tight.spec.basic.open.neigh}
	V_e=\{\xi\in\ftight\mid e\in\xi\}, 
\end{equation*}
and for $\{e_1,\ldots,e_n\}$ a finite (possibly empty) set in $E(S)$, we define
\[ V_{e:e_1,\ldots,e_n}= V_e\cap V_{e_1}^c\cap\cdots\cap V_{e_n}^c=\{\xi\in\ftight\mid e\in\xi,e_1\notin\xi,\ldots,e_n\notin\xi\}.\]
Sets of the form $V_{e:e_1,\ldots,e_n}$ form a basis of compact-open sets for a Hausdorff topology on $\ftight$ \cite[Remark 4.2]{Gil3}. Whenever we talk about a basic open set of $\ftight$, we mean a set of this form.

\begin{remark}\label{rmk:V_e.not.empty}
By the results of \cite[Section 12]{MR2419901}, $\ftight$ contains all ultrafilters of $E(S)$. In particular, an application of Zorn's lemma shows that for every $e\in E(S)\setminus\{0\}$, $V_e\neq\emptyset$.
\end{remark}

Before we recall the definition of the partial action associated with a labelled space, we present three lemmas that we will be needed during our work.

\begin{lemma}\label{lem:algebra.sets}
The algebra of sets over $\ftight$ generated by the family $\{V_e\mid e\in E(S)\}$ is equal to the set of compact-open subsets of $\ftight$.
\end{lemma}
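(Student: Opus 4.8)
The plan is to prove the two inclusions separately, relying throughout on the fact recorded just above the statement, namely that the sets $V_{e:e_1,\ldots,e_n}$ form a basis of compact-open sets for the Hausdorff topology on \ftight. Write $\mathcal{A}$ for the algebra of sets generated by $\{V_e\mid e\in E(S)\}$ and $\mathcal{K}$ for the collection of compact-open subsets of \ftight.

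For the inclusion $\mathcal{A}\subseteq\mathcal{K}$, I would first observe that every compact-open subset of \ftight\ is clopen: it is open by hypothesis, and it is closed because compact subsets of a Hausdorff space are closed. From this it follows that $\mathcal{K}$ is closed under the relevant operations. Indeed, for $K,L\in\mathcal{K}$ the union $K\cup L$ and the intersection $K\cap L$ are again compact and open, while $K\setminus L=K\cap(\ftight\setminus L)$ is open (as $L$ is clopen) and is a closed subset of the compact set $K$, hence compact. Thus $\mathcal{K}$ is closed under finite unions, finite intersections and relative complements, i.e.\ it is an algebra of sets in the sense relevant here, and it contains each generator $V_e$; since $\mathcal{A}$ is by definition the smallest such algebra, $\mathcal{A}\subseteq\mathcal{K}$.

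For the reverse inclusion $\mathcal{K}\subseteq\mathcal{A}$, let $U\in\mathcal{K}$. Since $U$ is open and the basic sets form a basis, $U$ is a union of basic sets each contained in $U$; compactness of $U$ then lets me extract a finite subcover, so $U=\bigcup_{i=1}^k V_{e^{(i)}:e^{(i)}_1,\ldots,e^{(i)}_{n_i}}$ is a finite union of basic sets. It then remains only to note that each basic set lies in $\mathcal{A}$: by definition $V_{e:e_1,\ldots,e_n}=V_e\cap V_{e_1}^c\cap\cdots\cap V_{e_n}^c$, equivalently $V_e\setminus\bigcup_{i=1}^n(V_e\cap V_{e_i})$, which is a finite combination of the generators under union, intersection and relative complement, hence an element of $\mathcal{A}$. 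Therefore $U\in\mathcal{A}$.

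I expect no serious obstacle: the argument is essentially the standard fact that in a locally compact, Hausdorff, totally disconnected space the compact-open sets form a (generalized) Boolean algebra generated by any basis of compact-open sets. The only point requiring genuine care is the identification of compact-open with clopen, since it is this---together with Hausdorffness, which is what makes the relative complement $K\setminus L$ compact---that simultaneously shows $\mathcal{K}$ is an algebra of sets and justifies the closure properties used in the first inclusion.
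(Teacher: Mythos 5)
Your proof is correct and follows essentially the same route as the paper's: one inclusion from the fact that the compact-open sets form an algebra containing each $V_e$, the other by writing a compact-open set as a finite union of basic sets (via compactness) and expressing each basic set $V_{e:e_1,\ldots,e_n}$ as a relative-complement combination of generators. The only difference is that you spell out why the compact-open sets are closed under the algebra operations (compact implies closed in the Hausdorff space $\ftight$), a point the paper simply asserts.
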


\begin{proof}
Let $\alfg{A}$ be the algebra of sets over $\ftight$ generated by the family $\{V_e\mid e\in E(S)\}$ and $\alfg{K}$ the set of compact-open subsets of $\ftight$. Since $\alfg{K}$ is closed under finite unions, finite intersections and relative complements, and for each $e\in E(S)$, $V_e$ is compact-open, we have that $\alfg{A}\scj\alfg{K}$. On the other hand, sets of the form $V_{e:e_1,\ldots,e_n}$, for $e,e_1,\ldots,e_n\in E(S)$, form a basis of compact-open sets, so that each element $U\in \alfg{K}$ can be written as finite union of these sets. Clearly each $V_{e:e_1,\ldots,e_n}=V_e\setminus(V_{e_1}\cup\ldots\cup V_{e_n})$ is in $\alfg{A}$ and hence $\alfg{K}\scj\alfg{A}$.
\end{proof}

\begin{lemma}\label{lem:closed}
The set $V=\bigcup_{A\in\acf} V_{(\eword,A,\eword)}$ is closed in $\ftight$.
\end{lemma}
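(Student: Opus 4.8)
The plan is to prove that $V$ is closed by showing its complement is open; note first that $V$ is automatically \emph{open}, being a union of the compact-open sets $V_{(\eword,A,\eword)}$, so the entire content of the statement lies in closedness. I would begin by translating membership through the filter description. By Theorem~\ref{thm.filters.in.E(S)} every $\xi\in\ftight$ has the form $\xi^\alpha$ for a unique $\alpha\in\overline{\awleinf}$, and by Remark~\ref{remark.when.in.xialpha} we have $(\eword,A,\eword)\in\xi^\alpha$ exactly when $A\in\xi_0$. Hence $\xi^\alpha\in V$ if and only if $\xi_0\neq\emptyset$, so $\ftight\setminus V=\{\xi^\alpha\mid \xi_0=\emptyset\}$. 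Since a complete family forces $\xi_0$ to be a genuine (non-empty) filter whenever $\alpha=\eword$, every $\xi$ in the complement satisfies $|\alpha|\geq1$.

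Fix $\xi=\xi^\alpha\in\ftight\setminus V$ and write $a=\alpha_1$. The completeness relation $\xi_0=\{A\in\acf\mid r(A,a)\in\xi_1\}$ shows that $\xi_0=\emptyset$ is equivalent to $r(A,a)\notin\xi_1$ for every $A\in\acf$. As the labelled space is normal, $\acfrg{a}$ is a Boolean algebra, and by Theorem~\ref{thm:TightFiltersType} the filter $\xi_1$ is an ultrafilter in $\acfrg{a}$; consequently each relative complement $r(a)\setminus r(A,a)$ belongs to $\xi_1$. The aim is to produce a single idempotent $e=(a,B,a)$ with $B\in\xi_1$ that separates $\xi$ from $V$, in the sense that $V_e\scj \ftight\setminus V$. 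Indeed, if $B\cap r(A,a)=\emptyset$ for every $A\in\acf$, then any $\eta\in V_e$ has $B\in\eta_1$, forcing $r(A,a)\notin\eta_1$ for all $A$ (being disjoint from an element of the ultrafilter $\eta_1$), whence $\eta_0=\emptyset$ and $\eta\notin V$. Thus $V_e$ is a compact-open neighbourhood of $\xi$ inside the complement, and openness follows.

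The crux, and the step I expect to be hardest, is producing a single $B\in\xi_1$ lying below all the complements at once, that is $B\scj r(a)\setminus\bigcup_{A\in\acf}r(A,a)$. Only the finite intersection property is immediate: for each finite family $A_1,\ldots,A_n$ the set $r(a)\setminus\bigcup_i r(A_i,a)$ lies in $\xi_1$ and is non-empty, but upgrading this to one set below all the complements simultaneously is not formal, and it is exactly here that the tight-filter description must be used. The two alternatives in Theorem~\ref{thm:TightFiltersType}(ii) (an infinite label set, or a sink set below each member of $\xi_1$) are what prevent the relative ranges $r(A,a)$ from accumulating on $\xi_1$; I would exploit them, together with the compactness of $V_{(a,r(a),a)}$ furnished by Lemma~\ref{lem:algebra.sets}, to reduce to a finite subfamily, after which finitely many exclusions yield the desired basic neighbourhood $V_{(a,B,a)\,:\,(\eword,A_1,\eword),\ldots,(\eword,A_n,\eword)}$. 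An alternative route is to prove directly that $V$ is compact and invoke Hausdorffness of $\ftight$; but since $V$ is the directed union of the compact-open sets $V_{(\eword,A,\eword)}$, compactness reduces to the very same finiteness phenomenon, so on either route this reduction—and the essential role of normality and of the tight structure in securing it—is the heart of the matter.
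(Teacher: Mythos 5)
There is a genuine gap, and it sits exactly at the step you yourself flag as the crux. Everything before it is fine (and mirrors the paper's own strategy of separating a point of $\ftight\setminus V$ from $V$ by a basic open set): with $a=\alpha_1$, membership of $\xi$ in the complement means $r(A,a)\notin\xi_1$ for every $A\in\acf$, and it would suffice to produce $B\in\xi_1$ disjoint from every $r(A,a)$, or failing that a finite-exclusion neighbourhood. But that step is never carried out --- ``I would exploit them \dots to reduce to a finite subfamily'' is a plan, not an argument --- so what you have is a proof outline with its essential content missing. (A secondary, minor point: Theorem~\ref{thm:TightFiltersType} literally gives that $\xi_1$ is an ultrafilter only when $|\alpha|=1$ or $\xi$ has infinite type; for finite type with $|\alpha|\geq 2$ this needs a separate induction, though it is true.) For comparison, the paper's proof runs the same separation in net form and, at precisely this spot, asserts without justification the even stronger claim that $\xi_0=\emptyset$ forces $r(A,\alpha_1)=\emptyset$ for \emph{all} $A\in\acf$. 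So you have correctly located the one point where all the difficulty lies.

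Unfortunately, the gap cannot be filled: the set $B$ you want need not exist, and in fact the lemma is false as stated, so neither your route nor the paper's can succeed. Take $\dgraph^0=\{u_n,w_n\mid n\in\N^*\}$, $\dgraph^1=\{e_n\mid n\in\N^*\}$ with $s(e_n)=u_n$, $r(e_n)=w_n$, a single label $a$, and let $\acf$ consist of all sets $F\cup B$ with $F\scj\{u_n\mid n\in\N^*\}$ finite and $B\scj r(a)=\{w_n\mid n\in\N^*\}$ finite or cofinite in $r(a)$; this is a normal labelled space, every $r(A,a)$ is finite, and $\acfrg{a}$ is the finite--cofinite algebra on $r(a)$. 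Put
\[\xi=\{(a,B,a)\mid B\in\acfrg{a},\ r(a)\setminus B\ \text{finite}\}.\]
This is a filter in $E(S)$ (upward closed since no $(\eword,C,\eword)$ lies above $(a,B,a)$ with $B$ infinite, as $r(C,a)$ is always finite), and it is an ultrafilter: any idempotent outside $\xi$ is either $(a,D,a)$ with $D$ finite, which is orthogonal to $(a,r(a)\setminus D,a)\in\xi$, or $(\eword,C,\eword)$, which is orthogonal to $(a,r(a)\setminus r(C,a),a)\in\xi$. Hence $\xi\in\ftight$ by Remark~\ref{rmk:V_e.not.empty}, and $\xi\notin V$ since it contains no $(\eword,A,\eword)$. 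Here $\bigcup_{A\in\acf}r(A,a)=r(a)$, so no nonempty $B$ avoids all relative ranges: your crux fails. Worse, $\xi\in\overline{V}$: let $\eta_n$ be the principal ultrafilter of $E(S)$ generated by $(a,\{w_n\},a)$; then $(a,\{w_n\},a)\leq(\eword,\{u_n\},\eword)$, so $\eta_n\in V$, while any basic neighbourhood $V_{e:e_1,\dots,e_k}$ of $\xi$ has $e=(a,B,a)$ with $B$ cofinite and each $e_j$ equal to some $(a,D_j,a)$ with $D_j$ finite or some $(\eword,C_j,\eword)$ with $C_j$ containing finitely many $u_m$; each constraint excludes only finitely many $n$, so $\eta_n$ lies in the neighbourhood for all large $n$. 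Thus $V$ is not closed. The statement is trivially true under extra hypotheses (e.g.\ if $\dgraph^0\in\acf$, then $V=\ftight$), but in the stated generality it fails, and the failure propagates: in this example $\ftight=V_{(\eword,r(a),\eword)}\cup V_{(a,r(a),a)}$ is compact, yet $\acf$ has no top element, contradicting the implication (ii)$\Rightarrow$(iii) of Corollary~\ref{unital}, which is the place the lemma is used.
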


\begin{proof}
Suppose that $\{\xi^{(i)}\}_{i\in I}$ is a net in $V$ converging to $\xi\in\ftight$. Let $\alpha\in\awstar$ the word associated with $\xi$. If $\xi_0\neq\emptyset$, then for any $A\in\xi_0$, we have that $\xi\in V_{(\eword,A,\eword)}\scj V$. Supposing that $\xi_0=\emptyset$, we have that $\alpha\neq\eword$ and $r(A,\alpha_1)=\emptyset$ for all $A\in\acf$, but in this case, $V_{(\alpha_1,r(\alpha_1),\alpha_1)}$ is an open set containing $\xi$ and no element of the net $\{\xi^{(i)}\}_{i\in I}$, which is contradiction. 
\end{proof}

Fix a unital commutative ring $R$. For a topological space $X$, let $\Lc(X,R)$ denote the set of all locally constant functions $f:X\to R$ with compact support and let $1_V$, with $V\scj X$, denote the characteristic function on the set $V$.

\begin{lemma} \label{lem:DiagGenerators}
 $\Lc(\ftight, R)$ is generated as an $R$-algebra by the set
 \[\{1_{V_{e}} \mid e \in E(S) \}. \]
\end{lemma}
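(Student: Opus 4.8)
The plan is to show that the $R$-algebra generated by $\{1_{V_e}\mid e\in E(S)\}$ is exactly $\Lc(\ftight,R)$. Since each $V_e$ is compact-open, every $1_{V_e}$ lies in $\Lc(\ftight,R)$, so the generated subalgebra is contained in $\Lc(\ftight,R)$; the real content is the reverse inclusion. First I would recall the standard structural fact that any $f\in\Lc(\ftight,R)$, being locally constant with compact support, takes only finitely many nonzero values $r_1,\dots,r_k$, and each preimage $f^{-1}(r_i)$ is a compact-open set. Hence $f=\sum_{i=1}^k r_i 1_{U_i}$ with each $U_i$ compact-open, and it suffices to prove that every characteristic function $1_U$ of a compact-open set $U$ lies in the subalgebra generated by the $1_{V_e}$.

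Next I would invoke Lemma~\ref{lem:algebra.sets}, which identifies the compact-open subsets of $\ftight$ with the algebra of sets generated by $\{V_e\mid e\in E(S)\}$. Thus any compact-open $U$ is obtained from the $V_e$ by finitely many applications of finite union, finite intersection, and relative complement. The key step is to translate these set operations into algebra operations on characteristic functions: for compact-open sets $V,W$ one has $1_{V\cap W}=1_V 1_W$, $1_{V\cup W}=1_V+1_W-1_V 1_W$, and $1_{V\setminus W}=1_V-1_V 1_W$. Each of these expresses the characteristic function of the combined set as an $R$-algebra expression (product, sum, difference) in the characteristic functions of the pieces. Therefore, by induction on the number of set operations used to build $U$ from the generators $V_e$, the function $1_U$ lies in the subalgebra generated by $\{1_{V_e}\mid e\in E(S)\}$, and consequently so does $f=\sum_i r_i 1_{U_i}$.

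I do not anticipate a serious obstacle here: the argument is a routine transfer of the Boolean-algebra structure of compact-open sets (already pinned down in Lemma~\ref{lem:algebra.sets}) to the corresponding ring of characteristic functions, using the elementary identities above. The one point requiring a little care is bookkeeping the induction in the case where $U$ is presented as a complex Boolean combination: one should either induct cleanly on the length of a Boolean expression for $U$, or first reduce $U$ to a finite disjoint union of basic sets $V_{e:e_1,\dots,e_n}$ and handle those directly via $1_{V_{e:e_1,\dots,e_n}}=1_{V_e}\prod_{j=1}^n(1-1_{V_{e_j}}1_{V_e})$, which again is an algebra expression in the $1_{V_e}$. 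Either route closes the reverse inclusion and completes the proof.
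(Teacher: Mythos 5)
Your proposal is correct and follows essentially the same route as the paper: decompose $f\in\Lc(\ftight,R)$ as a finite $R$-linear combination of characteristic functions of compact-open sets, then reduce those to the generators $1_{V_e}$ via Lemma~\ref{lem:algebra.sets}. The only difference is that where the paper cites \cite[Lemma~2.2]{GDD2} for the passage from Boolean operations on sets to ring operations on characteristic functions, you carry out that step explicitly (via $1_{V\cap W}=1_V1_W$, $1_{V\cup W}=1_V+1_W-1_V1_W$, $1_{V\setminus W}=1_V-1_V1_W$ and induction), which is a harmless unpacking of the cited lemma.
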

 
  \begin{proof}
   Suppose $f\in\Lc(\ftight, R)$. Since $f$ is locally constant with compact support, there is a finite collection of mutually disjoint compact-open sets $\{U_i\}_{i=1}^{n}$ in $\ftight$, and a finite set $\{r_1,\ldots,r_n\}\scj R$ such that $f=\displaystyle\sum_{i=1}^{n} r_{i}1_{U_{i}}$.
   This shows that the set of characteristic functions $1_U$, for $U$ compact-open subset of $\ftight$, generates $\Lc(\ftight,R)$. The result then follows from Lemma~\ref{lem:algebra.sets} and \cite[Lemma~2.2]{GDD2}.
 \end{proof}

Let $\mathbb{F}$ be the free group generated by $\alfg{A}$ (identifying the identity of $\mathbb{F}$ with $\eword$). For every $t\in \mathbb{F}$ there is a clopen set $V_t\scj \ftight$, which is compact if $t\neq\eword,$ and a homeomorphism $\varphi_t:V_{t^{-1}}\to V_t$ such that 
\begin{equation} \label{eq:PartialAction}
	\varphi=(\{V_t\}_{t\in\F},\{\varphi_t\}_{t\in\F})    
\end{equation}
is a topological partial action of $\F$ on $\ftight$ \cite[Proposition 3.12]{GillesDanie}. In particular, $V_\eword=\ftight$ and if 
$\alpha,\beta\in \awstar$, then $V_\alpha=V_{(\alpha,r(\alpha),\alpha)}, V_{\alpha^{-1}}=V_{(\eword,r(\alpha),\eword)}$, and $V_{(\alpha\beta^{-1})^{-1}}= \varphi_{\beta^{-1}}^{-1}(V_{\alpha^{-1}})$, with $V_{(\alpha\beta^{-1})^{-1}}\neq\emptyset$ if and only if $r(\alpha)\cap r(\beta)\neq\emptyset$ \cite[Lemma 3.10]{GillesDanie}. Moreover, if $V_t\neq \emptyset$, then $t=\alpha\beta^{-1}$ for some $\alpha,\beta\in \awstar$ \cite[Lemma 3.11(ii)]{GillesDanie}, in which case
\begin{equation}\label{eq:Vab-1}
   V_{\alpha\beta^{-1}}=V_{(\alpha,r(\alpha)\cap r(\beta),\alpha)} 
\end{equation}
by \cite[Lemma~4.3]{GDD2}. The definition of $\varphi_{\alpha\beta^{-1}}$ uses the functions defined in \cite[Section~4]{BoavaDeCastroMortari2}, however, we will not go through all the details here. For what is needed in this paper, it is sufficient to know that if $\xi\in V_{\beta\alpha^{-1}}$ with associated word $\beta\gamma$, then the associated word of $\varphi_{\alpha\beta^{-1}}(\xi)$ is $\alpha\gamma$ and, for $0\leq n\leq|\gamma|$, we have that
\begin{equation}\label{vaccinatedaligator}
    \varphi_{\alpha\beta^{-1}}(\xi)_{|\alpha|+n}=\{A\cap r(\alpha\gamma_{1,n})\mid A\in\usetr{\xi_{|\beta|+n}}{\acfrg{\gamma_{1,n}}}\},
\end{equation}
where $\usetr{\xi_{|\beta|+n}}{\acfrg{\gamma_{1,n}}}$ is the upper set of $\acfrg{\gamma_{1,n}}$ generated by $\xi_{|\beta|+n}$. In particular, if $\alpha=\eword$, we have that
\begin{equation}\label{eq:remove.beginning}
    \varphi_{\beta^{-1}}(\xi)_{n}=\usetr{\xi_{|\beta|+n}}{\acfrg{\gamma_{1,n}}}.
\end{equation}

The topological partial action $\varphi$ induces a dual algebraic partial action
$$\hat{\varphi}= (\{\Lc(V_t,R)\}_{t\in\mathbb{F}}, \{\hat{\varphi_t}\}_{t\in\mathbb{F}}),$$
where each $\hat{\varphi_t}:\Lc(V_{t^{-1}},R)\to \Lc(V_t,R)$ is the isomorphism defined by $$\hat{\varphi_t}(f)(\xi)=f\circ\varphi_{t^{-1}}(\xi).$$

 We let 
\[
\Lc(\ftight,R)\rtimes_{\hat{\varphi}} \mathbb{F}=\vecspan_R\left\{ \sum_{t\in\F}f_t\delta_t \mid f_t\in \Lc(V_t,R) \text{ and }   f_t\neq0 \text{ for finitely many }t\in\F \right\}, 
\]
be the partial skew group ring induced by $\hat{\phi}$. Note: $\delta_t$ has no meaning in itself and merely serves as a place holder. Multiplication in $\Lc(\ftight,R)\rtimes_{\hat{\varphi}} \mathbb{F}$ is given by 
\begin{equation*}
(a\delta_s) (b\delta_t)=\ph_s(\ph_{s^{-1}}(a)b)\delta_{st}.
\end{equation*}

Let $1_A$ denote the characteristic function on $V_{(\eword,A,\eword)}$, let $1_\alpha$ the characteristic function on $V_{(\alpha,r(\alpha),\alpha)}$ and let $1_{\alpha^{-1}}$ denote the characteristic function on $V_{(\eword,r(\alpha),\eword)}$.

\begin{proposition} \label{prop:SkewRingGenerators}
  The partial skew group ring $\Lc(\ftight,R)\rtimes_{\hat{\varphi}} \mathbb{F}$ over $R$ is generated by the set 
  \[\{1_{A}\delta_\eword, 1_a\delta_a, 1_{a^{-1}}\delta_{a^{-1}} \mid a\in \alf, A\in \acf \}.\]
\end{proposition}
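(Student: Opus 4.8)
The plan is to show that every spanning element $f\delta_t$ of the partial skew group ring lies in the $R$-subalgebra $\mathcal{G}$ generated by the set $\{1_{A}\delta_\eword, 1_a\delta_a, 1_{a^{-1}}\delta_{a^{-1}} \mid a\in \alf, A\in \acf\}$; since these elements $R$-span $\Lc(\ftight,R)\rtimes_{\hat\varphi}\F$ and $\mathcal{G}$ is closed under the algebra operations, this yields equality. First I would record two reductions. By \cite[Lemma 3.11(ii)]{GillesDanie}, $V_t\neq\emptyset$ forces $t=\alpha\beta^{-1}$ for some $\alpha,\beta\in\awstar$, so it suffices to treat $f\delta_{\alpha\beta^{-1}}$ with $f\in\Lc(V_{\alpha\beta^{-1}},R)$. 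Also, directly from $\hat{\varphi}_t(g)(\xi)=g(\varphi_{t^{-1}}(\xi))$ one obtains the basic identity $\ph_t(1_U)=1_{\varphi_t(U)}$ for every compact-open $U\scj V_{t^{-1}}$, which I will use repeatedly to evaluate products through the rule $(a\delta_s)(b\delta_t)=\ph_s(\ph_{s^{-1}}(a)b)\delta_{st}$.

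Next I would produce two families of building blocks. For $\alpha=a_1\cdots a_n\in\awplus$ set $P_\alpha=(1_{a_1}\delta_{a_1})\cdots(1_{a_n}\delta_{a_n})$ and $Q_\alpha=(1_{a_n^{-1}}\delta_{a_n^{-1}})\cdots(1_{a_1^{-1}}\delta_{a_1^{-1}})$, both in $\mathcal{G}$ by construction, and claim $P_\alpha=1_{V_\alpha}\delta_\alpha$ and $Q_\alpha=1_{V_{\alpha^{-1}}}\delta_{\alpha^{-1}}$, where $V_\alpha=V_{(\alpha,r(\alpha),\alpha)}$ and $V_{\alpha^{-1}}=V_{(\eword,r(\alpha),\eword)}$. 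This is proved by induction on $n$. For the base of the inductive computation one evaluates $(1_a\delta_a)(1_b\delta_b)=\ph_a(\ph_{a^{-1}}(1_a)\,1_b)\delta_{ab}$; here $\ph_{a^{-1}}(1_a)=1_{V_{a^{-1}}}$, and using the relative-range identity $r(r(A,a),b)=r(A,ab)$ together with the fact that $r(b)$ is the top element of $\acfrg{b}$ (hence lies in every filter there) one identifies $\ph_{a^{-1}}(1_a)\,1_b=1_{V_{(b,r(ab),b)}}$; applying $\ph_a$ then gives $1_{V_{(ab,r(ab),ab)}}=1_{V_{ab}}$, so $(1_a\delta_a)(1_b\delta_b)=1_{V_{ab}}\delta_{ab}$. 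The inductive step repeats this computation, and the claim for $Q_\alpha$ is entirely analogous.

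I would then build the diagonal. Fix $(\gamma,A,\gamma)\in E(S)$, so $A\in\acfrg{\gamma}$, i.e.\ $A\scj r(\gamma)$. Using \eqref{vaccinatedaligator} and \eqref{eq:remove.beginning} together with Remark~\ref{remark.when.in.xialpha}, one checks that $\varphi_\gamma(V_{(\eword,A,\eword)})=V_{(\gamma,A,\gamma)}$: the key point is that for $\xi$ with $A\in\xi_0$ completeness of the family gives $\varphi_\gamma(\xi)_{|\gamma|}=\{C\cap r(\gamma)\mid C\in\xi_0\}\ni A$, while $\varphi_{\gamma^{-1}}$ recovers $A\in\xi_0$ from $A\in\eta_{|\gamma|}$. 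Hence $\ph_\gamma(1_A)=1_{V_{(\gamma,A,\gamma)}}$, and a short computation with the multiplication rule yields
\[
(1_{V_\gamma}\delta_\gamma)(1_A\delta_\eword)(1_{V_{\gamma^{-1}}}\delta_{\gamma^{-1}})=1_{V_{(\gamma,A,\gamma)}}\delta_\eword,
\]
so $1_{V_{(\gamma,A,\gamma)}}\delta_\eword\in\mathcal{G}$ for every idempotent of $E(S)$ (the case $\gamma=\eword$ being the generator $1_A\delta_\eword$). By Lemma~\ref{lem:DiagGenerators} the functions $1_{V_e}$ generate $\Lc(\ftight,R)$ as an $R$-algebra, and since $g\mapsto g\delta_\eword$ is an $R$-algebra homomorphism, it follows that $f\delta_\eword\in\mathcal{G}$ for every $f\in\Lc(\ftight,R)$.

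Finally, for a general $f\delta_{\alpha\beta^{-1}}$ I would pass to the reduced form $t=\alpha\beta^{-1}$ (so $\alpha,\beta$ share no common suffix). If $t=\eword$ the element is covered by the previous step. Otherwise the multiplication rule together with \eqref{eq:Vab-1} gives $(1_{V_\alpha}\delta_\alpha)(1_{V_{\beta^{-1}}}\delta_{\beta^{-1}})=1_{V_{\alpha\beta^{-1}}}\delta_{\alpha\beta^{-1}}$, and reading this as $P_\alpha Q_\beta$ (or as $P_\alpha$ when $\beta=\eword$, or $Q_\beta$ when $\alpha=\eword$) shows $1_{V_t}\delta_t\in\mathcal{G}$; working in reduced form is precisely what avoids a spurious factor $1_{V_\eword}\delta_\eword$, which need not lie in $\Lc(\ftight,R)$ when $\ftight$ is non-compact. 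Since $f\in\Lc(\ftight,R)$ (extending by zero across the clopen set $V_t$) we have $f\delta_\eword\in\mathcal{G}$, and $\operatorname{supp}f\scj V_t$ gives
\[
f\delta_t=(f\delta_\eword)(1_{V_t}\delta_t)\in\mathcal{G},
\]
completing the argument. The only genuinely technical part is the filter bookkeeping in the second and third paragraphs---verifying $P_\alpha=1_{V_\alpha}\delta_\alpha$ and $\varphi_\gamma(V_{(\eword,A,\eword)})=V_{(\gamma,A,\gamma)}$ from the explicit description of $\varphi$---while everything else is a direct application of the multiplication formula.
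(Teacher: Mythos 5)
Your proposal is correct and follows essentially the same route as the paper's proof: reduce to spanning elements $f_t\delta_t$ with $t=\alpha\beta^{-1}$, handle the $t=\eword$ case via Lemma~\ref{lem:DiagGenerators} together with the conjugation identity $1_{V_{(\alpha,A,\alpha)}}\delta_\eword=(1_{\alpha}\delta_\alpha)(1_A\delta_\eword)(1_{\alpha^{-1}}\delta_{\alpha^{-1}})$, and factor the general case as $(f_t\delta_\eword)(1_{V_t}\delta_t)$ with $1_{V_t}\delta_t$ a product of generators. The only difference is that you verify the needed product identities (such as $P_\alpha=1_{V_\alpha}\delta_\alpha$ and $\varphi_\gamma(V_{(\eword,A,\eword)})=V_{(\gamma,A,\gamma)}$) by direct filter computations, whereas the paper simply cites \cite[Lemma 4.4]{GillesDanie} for them; your observation about avoiding the non-compactly-supported factor $1_{V_\eword}$ is exactly what the paper's three-way case split accomplishes.
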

  \begin{proof}
    Let $\mathcal{R}$ be the $R$-subalgebra of $\Lc(\ftight,R)\rtimes_{\hat{\varphi}} \mathbb{F}$ generated by the set \[\{1_{A}\delta_\eword, 1_a\delta_a,1_{a^{-1}}\delta_{a^{-1}} \mid a\in \alf, A\in \acf \}.\] It suffices to show that $\Lc(\ftight,R)\rtimes_{\hat{\varphi}} \mathbb{F} \scj \mathcal{R}$. Let $f_t\delta_t\in \Lc(\ftight,R)\rtimes_{\hat{\varphi}} \mathbb{F}$, with $f_t\in \Lc(V_{t},R)$ and $t\in\F$. Then $t=\alpha\beta^{-1}$ for some $\alpha,\beta\in \awstar$. If $t=\eword$ then, by \cite[Lemma 4.4]{GillesDanie}, we have that 
    \begin{equation} \label{eq:SkewRingGenerators}
        1_{V_{(\alpha,A,\alpha)}}\delta_\eword = (1_{V_{(\alpha,r(\alpha),\alpha)}}\delta_\alpha) (1_{V_{(\eword,A,\eword)}}\delta_\eword) (1_{V_{(\eword,r(\alpha),\eword)}}\delta_{\alpha^{-1}}).
    \end{equation}
    Thus, by Lemma~\ref{lem:DiagGenerators}, $f_\eword\delta_\eword\in\Lc(\ftight,R)\delta_\eword\scj  \mathcal{R}$.
    For $t\neq \eword$, we consider three cases. First assume that $|\alpha|,|\beta|\geq 1$. Then, by \cite[Lemma 4.4]{GillesDanie},  
	\begin{equation*}
		f_{\alpha\beta^{-1}}\delta_{\alpha\beta^{-1}} = (f_{\alpha\beta^{-1}}1_{\alpha\beta^{-1}})\delta_{\alpha\beta^{-1}} 
		= (f_{\alpha\beta^{-1}}\delta_\eword)(1_{\alpha}\delta_{\alpha})(1_{\beta^{-1}}\delta_{\beta^{-1}}) \in \mathcal{R}. 
	\end{equation*}
	Secondly, assume that $\beta=\eword$ and $\alpha\neq \eword$. Then 
	\begin{equation*}
		f_{\alpha}\delta_{\alpha} =  (f_{\alpha}1_{\alpha})\delta_{\alpha}
		= (f_{\alpha}\delta_\eword)(1_{\alpha}\delta_{\alpha}) \in \mathcal{R}. 
	\end{equation*}
	Lastly, assume that $\alpha=\eword$ and $\beta\neq \eword$. Then, by \cite[Lemma 4.4]{GillesDanie}, \begin{equation*}
		f_{\beta^{-1}}\delta_{\beta^{-1}} 
		= (f_{\beta^{-1}}1_{\beta^{-1}})\delta_{\beta^{-1}} 
		= (f_{\beta^{-1}}\delta_\eword)(1_{\beta^{-1}}\delta_{\beta^{-1}}) \in \mathcal{R}. 
	\end{equation*}
	Since $\Lc(\ftight,R)\rtimes_{\hat{\varphi}} \mathbb{F}$ is $R$-linearly spanned by elements of the form $f_t\delta_t$, it follows that $\Lc(\ftight,R)\rtimes_{\hat{\varphi}} \mathbb{F} \scj \mathcal{R}$, which completes the proof.
  \end{proof}
  
 The partial skew group ring $\Lc(\ftight,R)\rtimes_{\hat{\varphi}} \mathbb{F}$ has a natural $\mathbb{F}$-grading given by
\[\Lc(\ftight,R)\rtimes_{\hat{\varphi}} \mathbb{F} = \bigoplus_{t\in \mathbb{F}} \Lc(V_t,R)\delta_t,\]
\cite[Proposition 8.11]{MR3699795}. Using the map $\alpha\beta^{-1} \mapsto |\alpha|-|\beta|$, for $\alpha\beta^{-1}\in \mathbb{F}$ in reduced from, and the fact that the partial action $\varphi$ is semi-saturated \cite[Proposition 3.12]{GillesDanie}, it follows that $\Lc(\ftight,R)\rtimes_{\hat{\varphi}} \mathbb{F}$ has a $\mathbb{Z}$-grading with homogeneous component of degree $n\in \mathbb{Z}$ given by 
\[D_n=\vecspan_R\{f_{\alpha\beta^{-1}}\delta_{\alpha\beta^{-1}} \mid \alpha,\beta\in\awstar\ \mbox{and} \ |\alpha|-|\beta|=n \}. \]
  
\begin{proposition} \label{prop:OntoHomLeavittPartialSkew}
    There is a surjective $\mathbb{Z}$-graded homomorphism $\Phi: L_R\lspace \to \Lc(\ftight,R)\rtimes_{\hat{\varphi}} \mathbb{F}$ such that 
    \begin{equation*}
          \Phi(p_A) = 1_{A}\delta_\eword, \hspace{0.5cm}
          \Phi(s_a) = 1_a\delta_a, \hspace{0.5cm}
          \Phi(s^*_a) = 1_{a^{-1}}\delta_{a^{-1}}, 
    \end{equation*}
    for every $A\in \acf$ and $a\in \alf$.
\end{proposition}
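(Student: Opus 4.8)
The plan is to build $\Phi$ from the universal property of $L_R\lspace$ given in Definition~\ref{def:LPA}. It suffices to exhibit elements $P_A=1_A\delta_\eword$, $S_a=1_a\delta_a$ and $S_a^*=1_{a^{-1}}\delta_{a^{-1}}$ of $\Lc(\ftight,R)\rtimes_{\hat\varphi}\F$ and to verify that they satisfy relations (i)--(v). Universality then produces a unique $R$-algebra homomorphism $\Phi$ with the asserted values on generators. Surjectivity is immediate, since $\{P_A,S_a,S_a^*\}$ is precisely the generating set of Proposition~\ref{prop:SkewRingGenerators}. Finally, $\Phi$ is $\Z$-graded because $P_A\in\Lc(V_\eword,R)\delta_\eword$, $S_a\in\Lc(V_a,R)\delta_a$ and $S_a^*\in\Lc(V_{a^{-1}},R)\delta_{a^{-1}}$ sit in degrees $0$, $1$ and $-1$, matching the degrees of $p_A$, $s_a$ and $s_a^*$ under the grading of Proposition~\ref{prop:grading}.

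Every relation check reduces to the multiplication rule $(a\delta_s)(b\delta_t)=\ph_s(\ph_{s^{-1}}(a)b)\delta_{st}$ together with the explicit description of $\varphi$. Relations (iii) and (iv) are the quickest: using $\ph_a(1_{a^{-1}})=1_a$ and $\ph_{a^{-1}}(1_a)=1_{a^{-1}}$ one computes $S_a^*S_a=1_{a^{-1}}\delta_\eword=1_{r(a)}\delta_\eword=P_{r(a)}$, while $S_b^*S_a=0$ for $b\neq a$ because $V_a\cap V_b=\emptyset$ (filters in these sets have distinct first letters), and $S_aS_a^*S_a=S_a$, $S_a^*S_aS_a^*=S_a^*$ follow by the same bookkeeping. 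For relation (i) I would use that $A\mapsto V_{(\eword,A,\eword)}$ preserves finite intersections, finite unions and the empty set on $\ftight$; at the level of filters this amounts to each nonempty component $\xi_0$ of a tight filter being closed under finite meets, upward closed and \emph{prime}, the last of which follows from the ultrafilter description in Theorem~\ref{thm:TightFiltersType}, the completeness relation, and the identity $r(B\cup C,\alpha)=r(B,\alpha)\cup r(C,\alpha)$. The pointwise identity $1_{X\cup Y}=1_X+1_Y-1_{X\cap Y}$ for characteristic functions then delivers the inclusion--exclusion relation. Relation (ii) is a direct but slightly longer computation: both $P_AS_a$ and $S_aP_{r(A,a)}$ land in $\Lc(V_a,R)\delta_a$, and comparing supports through \eqref{vaccinatedaligator} (with the relevant word starting at $a$) shows they coincide.

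The crux, and the step I expect to be the main obstacle, is relation (v). For $A\in\acf_{reg}$, \eqref{eq:SkewRingGenerators} applied with $\alpha=a$ and middle set $r(A,a)\scj r(a)$ gives $S_aP_{r(A,a)}S_a^*=1_{V_{(a,r(A,a),a)}}\delta_\eword$, so relation (v) becomes the set-theoretic identity
\[
V_{(\eword,A,\eword)}=\bigsqcup_{a\in\lbf(A\dgraph^1)}V_{(a,r(A,a),a)}
\]
in $\ftight$, read off at the level of characteristic functions, the union being finite precisely because $A$ is regular. Disjointness is clear, as the filters in $V_{(a,r(A,a),a)}$ have associated word beginning with $a$. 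For the covering I would take a tight filter $\xi=\xi^\gamma$ with $A\in\xi_0$ and argue that regularity forces $\gamma\neq\eword$: were $\gamma=\eword$, then $\xi$ would be a finite-type tight filter whose top component $\xi_0$ is an ultrafilter containing $A$, so by Theorem~\ref{thm:TightFiltersType}(ii) the regular set $A$ would satisfy either $\lbf(A\dgraph^1)$ infinite or admit a nonempty $\acf$-subset of $\dgraph^0_{sink}$, both impossible for a regular set. Hence $a:=\gamma_1$ exists, and the completeness relation $\xi_0=\{C\in\acf\mid r(C,a)\in\xi_1\}$ yields $r(A,a)\in\xi_1$ (so $a\in\lbf(A\dgraph^1)$) and, via Remark~\ref{remark.when.in.xialpha}, $(a,r(A,a),a)\in\xi$; running the same completeness computation backwards gives the reverse inclusion. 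This partition is where the classification of the tight spectrum does the real work, and everything else is routine once the multiplication rule and the formulas for $\varphi$ are in hand.
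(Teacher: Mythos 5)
Your proposal is correct and follows essentially the same route as the paper's proof: define $\Phi$ on the generators, check that $1_A\delta_\eword$, $1_a\delta_a$, $1_{a^{-1}}\delta_{a^{-1}}$ satisfy relations (i)--(v), invoke the universal property, obtain surjectivity from Proposition~\ref{prop:SkewRingGenerators}, and read off $\Z$-gradedness from the definitions of the two gradings. The only difference is that the paper omits the relation verification, deferring it to the C*-algebraic counterpart in \cite[Theorem~4.8]{GillesDanie}, whereas you carry it out directly; your checks are sound, in particular the key partition $V_{(\eword,A,\eword)}=\bigsqcup_{a\in\lbf(A\dgraph^1)}V_{(a,r(A,a),a)}$ for regular $A$, where tightness (via Theorem~\ref{thm:TightFiltersType}) is exactly what rules out the empty associated word.
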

  \begin{proof}
    Define $\Phi$ on the generators of $L_R\lspace$ as above. That $\Phi(p_A)$, $\Phi(s_a)$ and $\Phi(s^*_a)$ for $A\in\acf$ and $a\in\alf$ satisfy the defining relations of $L_R\lspace$ follows along the same line as is shown in the proof of \cite[Theorem~4.8]{GillesDanie} for labelled space $C^*$-algebras, and we therefore omit this. By the universality of $L_R\lspace$, $\Phi$ is well-defined and is an $R$-algebra homomorphism, whose image is the $R$-algebra generated by $\{1_{A}\delta_\eword, 1_a\delta_a, 1_{a^{-1}}\delta_{a^{-1}} \mid a\in \alf, A\in \acf \}$. Now, Proposition~\ref{prop:SkewRingGenerators} implies that $\Phi$ is surjective. That $\Phi$ is a $\mathbb{Z}$-graded homomorphism is immediate from the definitions of the gradings.
  \end{proof}
  
It is proved in Theorem~\ref{thm:SkewRingLeavittIsom} that the homomorphism $\Phi$ is an isomorphism. The proof is deferred to Section~\ref{section:graded.uniqueness.theorem}, as our proof relies on a Graded Uniqueness Theorem (Corollary~\ref{GUT}). 

We finish this section with the following lemma, which proves useful on various occasions in this paper.

\begin{lemma}\label{lem:projections} Let  $\lspace$ be a labelled space, and suppose $A\in\acf$ and $r\in R$. The following holds in $L_R\lspace$.
\begin{enumerate}[(i)]
    \item If $A\neq \emptyset$, then $p_A\neq 0$.
    \item If $rp_A=0$, then $r=0$ or $A=\emptyset$.
    \item If $\{A_1,\ldots, A_n\}$ are non-empty, pairwise disjoint elements of $\acf$ and $\sum r_i p_{A_i}=0$, then $r_i=0$ for all $i$.
    \item For paths $\alpha,\beta\in\awstar$, $A\in\acf$ such that $A\cap r(\alpha)\cap r(\beta)\neq\emptyset$, and a non-zero $r\in R$, we have that $rs_\alpha p_As^*_\beta\neq 0$.
\end{enumerate}

\end{lemma}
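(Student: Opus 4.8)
The plan is to prove each of the four statements about $L_R\lspace$ by exploiting the surjective homomorphism $\Phi$ from Proposition~\ref{prop:OntoHomLeavittPartialSkew}, together with the explicit description of its target as a partial skew group ring of locally constant functions. The key observation is that although we do not yet know $\Phi$ is injective, we can still use it to prove \emph{non-vanishing} statements: if $\Phi(x)\neq 0$ in $\Lc(\ftight,R)\rtimes_{\hat\varphi}\F$, then necessarily $x\neq 0$ in $L_R\lspace$. Thus the entire lemma reduces to computing images under $\Phi$ and checking they are nonzero functions on the tight spectrum.

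\begin{proof}
We use the surjective homomorphism $\Phi$ of Proposition~\ref{prop:OntoHomLeavittPartialSkew}. Since $\Phi$ is a ring homomorphism, to show that an element $x\in L_R\lspace$ is nonzero it suffices to show that $\Phi(x)\neq 0$ in $\Lc(\ftight,R)\rtimes_{\hat\varphi}\F$.

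\textbf{(i)} Suppose $A\neq\emptyset$. Then $\Phi(p_A)=1_A\delta_\eword=1_{V_{(\eword,A,\eword)}}\delta_\eword$. Since $A\neq\emptyset$ we have $(\eword,A,\eword)\in E(S)\setminus\{0\}$, so by Remark~\ref{rmk:V_e.not.empty} the set $V_{(\eword,A,\eword)}$ is non-empty. Hence $1_{V_{(\eword,A,\eword)}}$ is a nonzero function (with value $1_R$ on a non-empty set, and $1_R\neq 0$ as $R$ is a nonzero unital ring), so $\Phi(p_A)\neq 0$ and therefore $p_A\neq 0$.

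\textbf{(ii)} Suppose $rp_A=0$ with $A\neq\emptyset$; we must show $r=0$. Applying $\Phi$ gives $r\cdot 1_{V_{(\eword,A,\eword)}}\delta_\eword=0$, so the function $r\cdot 1_{V_{(\eword,A,\eword)}}$ is identically zero. Picking $\xi\in V_{(\eword,A,\eword)}$ (possible by Remark~\ref{rmk:V_e.not.empty}, as in (i)), we evaluate at $\xi$ to obtain $r=r\cdot 1_R=0$.

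\textbf{(iii)} Suppose the $A_i$ are non-empty and pairwise disjoint with $\sum_i r_i p_{A_i}=0$. Fix an index $k$. Applying $\Phi$, the function $\sum_i r_i 1_{V_{(\eword,A_i,\eword)}}$ is identically zero. By \eqref{eq:Vab-1} (or directly from the description of the order on $E(S)$), since the $A_i$ are pairwise disjoint, for $\xi\in V_{(\eword,A_k,\eword)}$ we have $A_k\in\xi_0$ and, as $\xi_0$ is a filter closed under the partial order and proper, $A_i\notin\xi_0$ for $i\neq k$; hence $\xi\notin V_{(\eword,A_i,\eword)}$ for $i\neq k$. Choosing such $\xi$ (again non-empty by Remark~\ref{rmk:V_e.not.empty}) and evaluating gives $r_k=0$. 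As $k$ was arbitrary, all $r_i=0$.

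\textbf{(iv)} Suppose $A\cap r(\alpha)\cap r(\beta)\neq\emptyset$ and $r\neq 0$. By Proposition~\ref{prop:properties.of.c*-labelled.space}(v) we may replace $A$ by $A\cap r(\alpha)\cap r(\beta)$, so assume $\emptyset\neq A\in\acfrg\alpha\cap\acfrg\beta$. Then $(\alpha,A,\beta)\in S\setminus\{0\}$, and applying $\Phi$ together with the multiplication in the partial skew group ring yields $\Phi(rs_\alpha p_A s_\beta^*)=r\,g\,\delta_{\alpha\beta^{-1}}$ for a function $g\in\Lc(V_{\alpha\beta^{-1}},R)$ supported on $V_{(\alpha,A,\alpha)}\subseteq V_{\alpha\beta^{-1}}$ and equal to $1_R$ there. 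Since $A\neq\emptyset$ we have $(\alpha,A,\alpha)\in E(S)\setminus\{0\}$, so $V_{(\alpha,A,\alpha)}\neq\emptyset$ by Remark~\ref{rmk:V_e.not.empty}; evaluating $r\,g$ at a point of this set gives $r\neq 0$. Hence $\Phi(rs_\alpha p_A s_\beta^*)\neq 0$ and therefore $rs_\alpha p_A s_\beta^*\neq 0$.
\end{proof}

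The main obstacle is part~(iv), where one must track precisely how the generators $s_\alpha$, $p_A$, $s_\beta^*$ multiply in the partial skew group ring so as to identify the support of $\Phi(s_\alpha p_A s_\beta^*)$; this requires the formula \eqref{eq:Vab-1} and the computation \eqref{eq:SkewRingGenerators} relating $1_{V_{(\alpha,A,\alpha)}}\delta_\eword$ to products of the generators. Parts~(i)--(iii) are comparatively direct, as they only involve the diagonal functions $1_{V_{(\eword,A,\eword)}}\delta_\eword$ and a pointwise evaluation argument on the tight spectrum, relying on the non-emptiness guaranteed by Remark~\ref{rmk:V_e.not.empty}.
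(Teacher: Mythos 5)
Your proof is correct and takes essentially the same approach as the paper: both arguments push each non-vanishing claim through the surjective homomorphism $\Phi$ of Proposition~\ref{prop:OntoHomLeavittPartialSkew} and observe that the resulting element of $\Lc(\ftight,R)\rtimes_{\hat{\varphi}}\mathbb{F}$ is a nonzero multiple of a characteristic function on a non-empty set $V_e$ (Remark~\ref{rmk:V_e.not.empty}), exactly as in the paper's treatment of (i), (ii) and (iv). The only minor divergence is in (iii), where the paper stays inside $L_R\lspace$, multiplying by $p_{A_i}$ and using disjointness to reduce to (ii), while you instead evaluate the image under $\Phi$ pointwise using that the sets $V_{(\eword,A_i,\eword)}$ are pairwise disjoint; both variants are valid.
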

  \begin{proof}
   (i) If $A\neq \emptyset$, then $1_{A}\delta_\eword \in \Lc(\ftight,R)\rtimes_{\hat{\varphi}} \mathbb{F}$ is non-zero. Since $\Phi: L_R\lspace \to \Lc(\ftight,R)\rtimes_{\hat{\varphi}} \mathbb{F}$ is a homomorphism and $\Phi(p_A)=1_A\delta_\eword\neq 0$, it follows that $p_A\neq 0$ in $L_R\lspace$.

   (ii) Similarly to (i), if $r\neq 0$ and $A\neq \emptyset$, then $r1_{A}\delta_\eword \neq 0$ in $\Lc(\ftight,R)\rtimes_{\hat{\varphi}} \mathbb{F}$, which implies $rp_A \neq 0$  in $L_R\lspace$. 
   
   (iii) Since $\{A_1,\ldots, A_n\}$ are pairwise disjoint, we have that $0 = p_{A_i}(\sum r_j p_{A_j}) = r_ip_{A_i}$. Since $A_i\neq\emptyset$ for each $1\leq i\leq n$, it follows from (ii) that $r_i=0$ for for each $1\leq i\leq n$.
   
   (iv) If $\alpha,\beta\in\awstar$ and $A\in\acf$ are such that $A\cap r(\alpha)\cap r(\beta)\neq\emptyset$, then $V_{\alpha\beta^{-1}}\neq\emptyset$. 

   Thus, 
   \begin{eqnarray*}
      \Phi(rs_\alpha p_As^*_\beta) &=& \Phi(rs_\alpha p_{A\cap r(\alpha)\cap r(\beta)}s^*_\beta) \\
      &=& (r1_{\alpha}\delta_{\alpha})(1_{A\cap r(\alpha)\cap r(\beta)}\delta_\eword)(1_{\beta^{-1}}\delta_{\beta^{-1}}) \\
      &=& r1_{V_{(\alpha, A\cap r(\alpha)\cap r(\beta), \alpha)}}\delta_{\alpha\beta^{-1}} \\
      & \neq& 0,
   \end{eqnarray*}
   where the first equality follows from Proposition~\ref{prop:properties.of.c*-labelled.space}, and the last equality follows from \cite[Lemma 4.4]{GillesDanie}. 
   Hence $rs_\alpha p_As^*_\beta\neq 0$.
\end{proof}

\section{The Leavitt labelled path algebra as a Cuntz-Pimsner ring and the graded uniqueness theorem}\label{section:graded.uniqueness.theorem}

Our main goal in this section is to obtain the graded uniqueness theorem for Leavitt labelled path algebras. For this, we first use \cite[Theorem~3.1]{CFHL} to realize Leavitt labelled path algebras as Cuntz-Pimsner rings, and then apply it to the graded uniqueness theorem for Cuntz-Pimsner rings (\cite[Corollary~5.4]{CO55}).

For the reader's convenience, we reproduce \cite[Theorem~3.1]{CFHL} below, but due to the technical details required, we refer the reader to \cite{CO55,CFHL} for the definition of a $S$-system and its Cuntz-Pimsner ring. Before we proceed, we set up some notation necessary for \cite[Theorem~3.1]{CFHL}.

Let $S$ be a ring, $M$ a left $S$-module and $I$ a subset of $M$. The \emph{left annihilator} of $I$ by $S$, defined by \[\Ann_S(I)=\{r\in S \mid rx=0 \text{ for all } x\in I\},\]  is a left ideal of $S$. If $I$ is a submodule of $M$, then $\Ann_S(I)$ is a two-sided ideal of $S$. Let $J$ be a two-sided ideal of a ring $S$. We let $J^{\perp}$ denote the two-sided ideal \[J^{\perp}=\{r\in S \mid ry = yr = 0 \text{ for all } y\in J\}\] (for Leavitt path algebras of graphs these ideals are studied in \cite[Example~3.6]{CFHL}).

\begin{theorem}\label{hora}$($\cite[Theorem~3.1]{CFHL}$)$
Let $A=\bigoplus_{i\in \Z}A_i$ be a $\Z$-graded ring, $S$ a subring of $A_0$, and $I\scj A_1$ and  $J\scj A_{-1}$ additive subgroups such that 
\begin{enumerate}[\upshape(1)]
\item $SI, IS \scj I$, $SJ, JS \scj J$ and 
$JI\scj S$;

\medskip

\item For any finite subset $\{i_1,\dots,i_n\}\scj I$ there is an element $a$ in $IJ$ such that $a i_l=i_l$ for each $1\leq l \leq n$, and for 
any finite subset $\{j_1,\dots,j_m\}\scj J$ there is an element $b$ in $IJ$ such that $j_l b=j_l$ for each $1\leq l \leq m$;

\medskip

\item For $x\in \Ann_S(I)^\bot$ and $a\in IJ$, if $x-a \in \Ann_{A_0}(I)$, then $a\in S$;

\medskip 

\item $\Ann_S(I)\cap \Ann_S(I)^\bot=\{0\}$.
\end{enumerate}

Then there exists an $S$-bimodule homomorphism $\psi:J\otimes_S I \rightarrow S$ such that $\psi(j\otimes_S i)=ji$ for each $j\in J, i\in I$, and $(J,I,\psi)$ is an $S$-system. Furthermore, there is a graded isomorphism from the Cuntz-Pimsner ring $\mathcal O_{(J,I,\psi)}$ of the $S$-system $(J,I,\psi)$ to the subring of $A$ generated by $S,I,J$. 
\end{theorem}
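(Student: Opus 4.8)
The plan is to construct the $S$-system $(J,I,\psi)$ explicitly and then identify its Cuntz-Pimsner ring with the subring $A' := \langle S,I,J\rangle \subseteq A$, using the universal property of $\mathcal{O}_{(J,I,\psi)}$ together with the graded uniqueness theorem of \cite{CO55}. First I would check that the assignment $j\otimes_S i \mapsto ji$ descends to a well-defined $S$-bimodule homomorphism $\psi:J\otimes_S I\to S$. Balancedness over $S$ is immediate from associativity in $A$ (so $(js)i=j(si)$), the image lands in $S$ by the inclusion $JI\subseteq S$ of (1), and the bimodule property follows again from associativity together with $SI,IS\subseteq I$ and $SJ,JS\subseteq J$. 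The substantive part of this first stage is verifying that $(J,I,\psi)$ satisfies all axioms of an $S$-system in the sense of \cite{CO55}, which amounts to matching conditions (2)--(4) against those axioms: condition (2) supplies the local units in $IJ$ making $I$ and $J$ the algebraic analogue of a nondegenerate, frame-equipped correspondence, while (3) and (4) control the covariance data, with (4) playing the role of the Katsura-type faithfulness requirement $\Ann_S(I)\cap\Ann_S(I)^{\perp}=\{0\}$.

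Next I would invoke the universal property of $\mathcal{O}_{(J,I,\psi)}$. The inclusions $S\hookrightarrow A_0$, $I\hookrightarrow A_1$ and $J\hookrightarrow A_{-1}$, together with the multiplication of $A$, form a representation of the system inside $A'$. Note that products $IJ$ live in $A_1A_{-1}\subseteq A_0$ and act as the \emph{compact operators}; condition (2) lets one write the relevant elements as finite sums of such rank-one terms, and condition (3) is what guarantees that this representation is genuinely \emph{covariant} (that is, satisfies the Cuntz-Pimsner relation) rather than merely Toeplitz. The universal property then yields a ring homomorphism $\phi:\mathcal{O}_{(J,I,\psi)}\to A'$ carrying the canonical generators onto $S$, $I$ and $J$. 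Since $\mathcal{O}_{(J,I,\psi)}$ carries its natural $\Z$-grading (degree equal to the number of $I$-factors minus the number of $J$-factors) and $I\subseteq A_1$, $J\subseteq A_{-1}$, $S\subseteq A_0$, the map $\phi$ is $\Z$-graded, and it is surjective because $A'$ is by definition generated by $S$, $I$ and $J$.

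The main obstacle is injectivity of $\phi$. Here I would appeal to the graded uniqueness theorem for Cuntz-Pimsner rings (\cite[Corollary~5.4]{CO55}), which reduces injectivity of a graded homomorphism out of $\mathcal{O}_{(J,I,\psi)}$ to injectivity on the coefficient ring $S$ alone. Since $\phi|_S$ is nothing but the subring inclusion $S\hookrightarrow A$, it is injective, so the graded uniqueness theorem forces $\phi$ to be injective and hence a graded isomorphism onto $A'$. The two delicate points I expect to require the most care are establishing that the representation inside $A'$ is covariant rather than only Toeplitz, which is exactly where (2) and (3) are used, and confirming that the faithfulness hypothesis needed to apply the graded uniqueness theorem is met, which is the role of condition (4); everything else is a routine consequence of the grading and of associativity in $A$.
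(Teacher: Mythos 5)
There is an important mismatch with the premise of this exercise: the paper contains no proof of this statement at all. Theorem~\ref{hora} is reproduced verbatim from \cite[Theorem~3.1]{CFHL}, and the authors explicitly defer even the definitions of an $S$-system and of its Cuntz--Pimsner ring to \cite{CO55,CFHL}. So there is no internal argument to compare yours against; all I can do is judge whether your outline would prove the quoted result. Your architecture is the natural one (and almost certainly the one used in \cite{CFHL}): condition (1) makes $I$ and $J$ into $S$-bimodules and makes $\psi(j\otimes_S i)=ji$ a well-defined $S$-bimodule map; the inclusions $S\hookrightarrow A_0$, $I\hookrightarrow A_1$, $J\hookrightarrow A_{-1}$ give a representation of the system inside the subring $A'$ generated by $S,I,J$; universality yields a graded surjection $\phi\colon\mathcal O_{(J,I,\psi)}\to A'$; and \cite[Corollary~5.4]{CO55} reduces injectivity of a graded homomorphism to injectivity on the coefficient ring, which holds because $\phi|_S$ is an inclusion. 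This is exactly how the present paper itself later deploys the theorem (Corollary~\ref{GUT} and Theorem~\ref{LPACPR}), so the skeleton is sound.

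As a proof, however, the proposal has genuine gaps, and they sit precisely at the two places you yourself flag as delicate. First, a structural confusion: in \cite{CO55} an $S$-system is nothing more than a pair of $S$-bimodules together with an $S$-bimodule map, so $(J,I,\psi)$ is a system on the strength of condition (1) alone; conditions (2)--(4) are not ``system axioms'' to be matched, but hypotheses needed later, and lumping them into the first stage obscures where they actually act. Second, and decisively, the covariance verification --- the entire mathematical content of the theorem --- is asserted rather than performed. One must identify the ideal of $S$ on which the Cuntz--Pimsner relation is imposed (the algebraic analogue of Katsura's ideal, built from $\Ann_S(I)^{\perp}$ and the elements of $S$ whose left action on $I$ agrees with that of some $a\in IJ$) and then prove that for such $x$ the identity $x=a$ really holds in $A$. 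This is not formal: a priori $a$ only lies in $IJ\subseteq A_0$, and it is exactly condition (3) that places $a$ in $S$, so that $x-a\in\Ann_S(I)$, after which condition (4) can be brought to bear --- and even then one must still argue that $x-a$ lies in $\Ann_S(I)^{\perp}$ before (4) forces $x-a=0$. None of this appears in your text. Likewise, invoking \cite[Corollary~5.4]{CO55} requires checking its standing hypotheses on the system (the condition-(FS)-type nondegeneracy supplied by (2) and the faithfulness supplied by (4)), not merely injectivity on $S$. Until these verifications are written out, what you have is a correct plan whose crucial steps are black boxes, not a proof.
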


\begin{remark}
In \cite{CFHL} the subring $S$ is denoted by $R$. We change the notation here to avoid confusion with the coefficient ring $R$ of $L_R\lspace$. 
\end{remark}

\begin{theorem}\label{LPACPR} Let $\lspace$ be a normal labelled space and $R$ a unital commutative ring. Then, there exists a subring $S\scj L_R\lspace_0 $ and an $S$-system $(J,I,\psi)$ such that $L_R\lspace$
is graded isomorphic to the Cuntz-Pimsner ring $\mathcal O_{(J,I,\psi)}$. 
\end{theorem}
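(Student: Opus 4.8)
The plan is to apply Theorem~\ref{hora} directly with $A=L_R\lspace$, equipped with the $\Z$-grading from Proposition~\ref{prop:grading}, so the work reduces to identifying the correct subring $S\scj A_0$ and the additive subgroups $I\scj A_1$, $J\scj A_{-1}$, and then verifying the four hypotheses (1)--(4). Guided by the graph and ultragraph cases, I would take $I=\vecspan_R\{s_a p_A \mid a\in\alf,\ A\in\acfrg{a}\}$ (degree $+1$ elements), $J=\vecspan_R\{p_A s_a^* \mid a\in\alf,\ A\in\acfrg{a}\} = I^*$ (degree $-1$), and for $S$ the natural candidate is the subalgebra of $A_0$ generated by all $p_A$ together with the products $s_a p_A s_b^*$ with $|\alpha|=|\beta|$ that arise from $JI$; concretely $S=\vecspan_R\{s_\alpha p_A s_\beta^* \mid |\alpha|=|\beta|,\ A\in\acfrg{\alpha}\cap\acfrg{\beta}\}$, which is exactly $A_0$, or a suitable subring thereof containing the $p_A$'s. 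The products $JI$ land in this span by Proposition~\ref{prop:properties.of.c*-labelled.space}(vi), giving the multiplicative compatibility in part~(1).

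First I would check hypothesis~(1): that $S$ is closed under multiplication and that $SI,IS\scj I$, $SJ,JS\scj J$, $JI\scj S$. These are all instances of the multiplication formula in Proposition~\ref{prop:properties.of.c*-labelled.space}(vi), since multiplying degree-homogeneous spanning elements produces another spanning element of the appropriate degree; the only point needing care is confirming that the relative-range sets stay inside the Boolean algebras $\acfrg{\alpha}$, which follows from closure of $\acf$ under relative ranges. For hypothesis~(2), given finitely many generators $s_{a_l}p_{A_l}\in I$, I would produce a local unit $a\in IJ$: the element $\sum_{a\in F} s_a p_{r(a)} s_a^*$ over a suitable finite set $F$ of labels acts as a left identity on each $s_{a_l}p_{A_l}$, using $s_a^* s_a = p_{r(a)}$ and the orthogonality $s_b^* s_a=0$ for $b\neq a$ from relation~(iii); the symmetric statement for $J$ is proved the same way. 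This step is where the local-unit structure of $L_R\lspace$ (it need not be unital) gets used essentially, but it is routine given the Cuntz--Krieger type relations.

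The hard part will be hypotheses~(3) and~(4), which concern the annihilator ideals $\Ann_S(I)$ and $\Ann_S(I)^\bot$ and encode, in the graph case, the distinction between regular and singular vertices. The key computation is to identify $\Ann_S(I)$: an element $x\in S$ annihilates all of $I$ precisely when $x s_a p_A=0$ for every $a,A$, and using the relation~(v) $p_A=\sum_{a\in\lbf(A\dgraph^1)} s_a p_{r(A,a)} s_a^*$ available on regular sets, one shows that the projections $p_A$ with $A\in\acf_{reg}$ lie in $IJ$ and are \emph{not} in $\Ann_S(I)$, whereas the ``singular'' part is annihilated. I would use Lemma~\ref{lem:projections} (especially parts (ii) and (iii) on linear independence of projections over disjoint sets) to argue that $\Ann_S(I)$ consists of combinations supported on the singular sets and that $\Ann_S(I)^\bot$ captures the regular part, giving the direct-sum-like splitting that forces $\Ann_S(I)\cap\Ann_S(I)^\bot=\{0\}$ in~(4). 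For~(3), given $x\in\Ann_S(I)^\bot$ and $a\in IJ$ with $x-a\in\Ann_{A_0}(I)$, the relation~(v) rewrites $a$ as a genuine element of $S$ (a finite sum of $s_b p_B s_b^*$ terms), so membership $a\in S$ is exactly the content of relation~(v) applied to regular sets; the subtlety is checking this rewriting is forced rather than merely possible, which again rests on the faithfulness information from Lemma~\ref{lem:projections}. Once (1)--(4) are verified, Theorem~\ref{hora} yields the $S$-system $(J,I,\psi)$ with $\psi(j\otimes i)=ji$ and a graded isomorphism from $\mathcal O_{(J,I,\psi)}$ onto the subring of $A$ generated by $S,I,J$; since that subring contains all $p_A$, $s_a$, $s_a^*$, it is all of $L_R\lspace$ by Proposition~\ref{prop:properties.of.c*-labelled.space}(viii), completing the proof.
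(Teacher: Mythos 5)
The decisive gap is your choice of $S$. Theorem~\ref{hora} requires $SI\scj I$, and this fails for your concrete candidate $S=L_R\lspace_0=\vecspan_R\{s_\alpha p_A s_\beta^*\mid |\alpha|=|\beta|\}$. Indeed, take $s_\alpha p_A s_\beta^*$ with $|\alpha|=|\beta|=2$, write $\beta=c\beta'$ with $c\in\alf$ and $|\beta'|=1$, and assume the last letters of $\alpha$ and $\beta$ are distinct; multiplying by $s_c p_B\in I$, Proposition~\ref{prop:properties.of.c*-labelled.space}(vi) gives $(s_\alpha p_A s_\beta^*)(s_cp_B)=s_\alpha p_{A\cap r(B,\beta')}s_{\beta'}^*$, a degree-one element carrying a nontrivial $s_{\beta'}^*$ factor. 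Such an element is in general \emph{not} in $I=\vecspan_R\{s_ap_A\}$: under the surjection $\Phi$ of Proposition~\ref{prop:OntoHomLeavittPartialSkew} its image lies in the component $\Lc(V_{\alpha(\beta')^{-1}},R)\delta_{\alpha(\beta')^{-1}}$ of the $\F$-grading of the partial skew group ring, where $\alpha(\beta')^{-1}$ is a reduced word of length three, while $\Phi(I)\scj\bigoplus_{a\in\alf}\Lc(V_a,R)\delta_a$; so whenever the product is nonzero (Lemma~\ref{lem:projections}(iv) gives plenty of such instances, already for ordinary graphs, where these are the familiar monomials $efd^*$ with $d\neq f$) it cannot lie in $I$. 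The slip that led you here is the remark that the products $s_ap_As_b^*$ ``arise from $JI$'': by relation (iii) of Definition~\ref{def:LPA}, $s_a^*s_b=0$ for $a\neq b$ and $s_a^*s_a=p_{r(a)}$, so in fact $JI\scj\vecspan_R\{p_A\mid A\in\acf\}$; it is $IJ$ that produces the terms $s_ap_Cs_b^*$, and Theorem~\ref{hora} only demands $JI\scj S$ (the set $IJ$ enters hypotheses (2) and (3) in a different role). The correct choice, and the one the paper makes, is exactly $S=\vecspan_R\{p_A\mid A\in\acf\}$, for which hypothesis (1) is immediate from relations (i)--(iii).

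Your hedge ``or a suitable subring thereof containing the $p_A$'s'' cannot repair this, because hypotheses (3) and (4) are statements about $\Ann_S(I)$ and $\Ann_S(I)^{\perp}$, whose meaning changes with $S$; until $S$ is fixed they cannot even be formulated, let alone verified. Moreover, with the correct $S$, hypothesis (3) is where essentially all the work lies: one must compute $\Ann_S(I)=\vecspan_R\{p_A\mid A\scj\dgraph^0_{sink}\}$ and $\Ann_S(I)^{\perp}=\vecspan_R\{p_A\mid \nexists\, B\in\acf \text{ with } \emptyset\neq B\scj A\cap\dgraph^0_{sink}\}$, and then show that if $x\in\Ann_S(I)^{\perp}$, $a\in IJ$ and $x-a\in\Ann_{L_R\lspace_0}(I)$, then every set occurring in $x$ is regular and in fact $x=a$, whence $a\in S$. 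Note also that your sketch runs relation (v) in the wrong direction: (v) rewrites $p_A$, for regular $A$, as an element of $IJ$; it does not rewrite a general element $a\in IJ$ as an element of $S$. The actual argument must force, term by term, that in each summand $s_{a_j}p_{B_j}s_{b_j}^*$ of $a$ one has $a_j=b_j$, and that the coefficients and sets match those determined by $x$ (this is the content of the paper's Claims 4--9, using Lemma~\ref{lem:projections} to cancel). None of this is routine, and it is absent from the proposal, so beyond fixing $S$ the verification of (3) still has to be carried out in full.
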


\begin{proof}
In the notation of Theorem~\ref{hora}, let $S=\vecspan_R\{p_A \mid A\in \acf\}$ (which is a subring of $L_R\lspace$ and contained in the homogeneous component $L_R\lspace_0$), $I=\vecspan_R\{s_ap_A \mid A\in \acf, a\in \alf\}$ and $J=\vecspan_R\{p_As_a^* \mid A\in \acf, a\in \alf\}$. Next we verify that Conditions~(1) to (4) of Theorem~\ref{hora} are satisfied.

Condition (1) of Theorem~\ref{hora} follows directly from the relations defining $L_R\lspace$ (see Definition~\ref{def:LPA}). 

To see Condition (2) of Theorem~\ref{hora}, let $\{i_1,\ldots, i_n\}\scj I$. Notice that, for each $1\leq l \leq n$, we can write
\[ i_l = \sum_{j=1}^{n_l} \lambda_j^l s_{a_j^l} p_{A_j^l},
\]
where $A_j^l\in \acf_{a_j^l}$ and $\lambda^l_j\in R$. Let $F=\cup_{l=1}^{n}\{a_1^l, a_2^l, \ldots, a_{n_l}^l\}$ and put
\[a=\sum_{b\in F} s_{b} p_{r(b)} s_{b}^*,\]
which is an element of $IJ$. Using the relations in Definition~\ref{def:LPA}, it is easy to see that $a i_l = i_l$ for each $1\leq l \leq n$ as desired. The second part of Condition~(2) follows analogously.

Before proceeding to the proof of Conditions~(3) and (4), we need several claims.

{\bf Claim 1.}  We have that \[\Ann_S(I)=\vecspan_R\{p_A \mid A\scj\dgraph^0_{sink}\}.\]

Proof. Indeed, if $A\scj\dgraph^0_{sink}$ and $a\in\alf$, then by (i) and (ii) from Definition~\ref{def:LPA}, we have that $p_A(s_ap_B) = s_ap_{r(A,a)}p_B=s_ap_{\emptyset}p_B=0$ and then $\vecspan_R\{p_A:A\scj\dgraph^0_{sink}\}\scj \Ann_S(I)$. On the other hand, let $\sum_{j=1}^n\lambda_jp_{A_j}\in \Ann_S(I)$, where each $\lambda_j\neq0$ and the $A_j$'s are pairwise disjoint, and let $a\in\alf$. From the definition of $\Ann_S(I)$, for each $1\leq l\leq n$, we have that
\[0=\left(\sum_{j=1}^n\lambda_jp_{A_j}\right)(s_ap_{r(A_l,a)})= s_a\left(\sum_{j=1}^n\lambda_jp_{r(A_j,a)\cap r(A_l,a)}\right) = \lambda_ls_ap_{r(A_l,a)}.\]
By Items~(i) and (iv) in Lemma~\ref{lem:projections}, we have that $r(A_l,a)=\emptyset$. Since $a$ is arbitrary, we must have $A_l\scj \dgraph^0_{sink}$, which proves that $\sum_{j=1}^n\lambda_jp_{A_j}\in \vecspan_R\{p_A \mid A\scj\dgraph^0_{sink}\}$. $\lozenge$

{\bf Claim 2.} We have that \[\Ann_S(I)^{\perp}=\vecspan_R\{p_A \mid \nexists\,B\in\acf,\text{ such that }\emptyset \neq  B\scj A\cap \dgraph^0_{sink}\}.\]

Proof. Let $A\in\acf$ for which there does not exist $B\in\acf$ such that $\emptyset\neq B\scj A\cap \dgraph^0_{sink}$. To see that $p_A\in \Ann_S(I)^{\perp}$, let $C\in\acf$ such that $C\scj \dgraph^0_{sink}$. Since $p_Cp_A=p_Ap_C = p_{A\cap C}$, $A\cap C\in\acf$ and $A\cap C\scj A\cap \dgraph^0_{sink}$, we must have $A\cap C=\emptyset$ and hence $p_Ap_C=0$ as desired. For the other inclusion, let $\sum_{j=1}^n\lambda_jp_{A_j}\in \Ann_S(I)^{\perp}$, where $\lambda_j\neq0$ and $A_j$'s are pairwise disjoint. Let $1\leq l \leq n$ and $B\scj A_l\cap\dgraph^0_{sink}$. Then
\[0=\left(\sum_{j=1}^n\lambda_jp_{A_j}\right)p_B = \sum_{j=1}^n\lambda_jp_{A_j\cap B} = \lambda_lp_{A_l\cap B} =\lambda_lp_{B}.\]
By Lemma~4.5, we have that $B=\emptyset$. In other words, $p_{A_l}\in \{p_A \mid \nexists\, B\in\acf,\text{ such that }\emptyset \neq B\scj A\cap \dgraph^0_{sink}\}$ and, therefore, 
\[\Ann_S(I)^{\perp} \scj \vecspan_R\{p_A \mid \nexists \,B\in\acf,\text{ such that }\emptyset \neq B\scj A\cap \dgraph^0_{sink}\}.\ \lozenge\] 

{\bf Claim 3.}  If $a,b\in\alf$ are such that $a\neq b$, then $s_a^*xs_b=0$ for all $x\in S$.

Proof. This follows from (ii) and (iii) of Definition~\ref{def:LPA}. $\lozenge$

{\bf Claim 4.} If $a\in IJ$, then it can be written as
\[a=\sum_{j=1}^n \mu_js_{a_j}p_{B_j}s_{b_j}^*,\]
where $\mu_j\in R$,  $\mu_js_{a_j}p_{B_j}s_{b_j}^*\neq 0$, $B_j\in \acf_{a_j}\cap \acf_{b_j}$ for all $j$, and if $j\neq k$ are such that $a_j=a_k$ and $b_j=b_k$, then $B_j\cap B_k=\emptyset$.

Proof. Using (i) of Definition \ref{def:LPA}, we can write any $a\in IJ$ as a sum as above, not necessarily satisfying the extra conditions. By Proposition \ref{prop:properties.of.c*-labelled.space}(v) we may suppose, without loss of generality, that $B_j\in \acf_{a_j}\cap \acf_{b_j}$. Applying Lemma~\ref{disjoint}, and replacing the $B_j$'s by mutually disjoint $C_i$'s and summing the coefficients of the terms $s_{a_j}p_{C_i}s^*_{b_j}$ that appears multiple times, we get the claim. $\lozenge$

{\bf Claim 5.} Suppose that $x-a\in\Ann_{L_R\lspace_0}(I)$, where $x\in \Ann_S(I)^{\perp}$ and $a\in IJ$. Furthermore, suppose that $a$ is written as in Claim~4. Then $a_j=b_j$ for all $j$.

Proof. Fix $1\leq k\leq n$ and notice that $s_{a_k}^*(x-a)s_{b_k}=0$ because $x-a\in\Ann_{L_R\lspace_0}(I)$ and $s_{b_k}=s_{b_k}p_{r(b_k)}\in I$. Suppose, by contradiction, that $a_k\neq b_k$. Since $x\in S$, we have $x=\sum_i\lambda_ip_{A_i}$ and by Claim~3, $s_{a_k}^*xs_{b_k}=0$, from where we obtain $s_{a_k}^*as_{b_k}=0$. Using the characterization of $a\in IJ$ as in Claim~4, we have that
\begin{align*}
0 = s_{a_k}^*as_{b_k} & =  s_{a_k}^*\left(\sum_{j=1}^n \mu_js_{a_j}p_{B_j}s_{b_j}^*\right)s_{b_k} \\
& = \sum_{\substack{j \,:\, a_j=a_k \\ \text{ and } b_j=b_k}} \mu_jp_{r(a_k)}p_{B_j}p_{r(b_k)}\\
&= \sum_{\substack{j \,:\, a_j=a_k \\ \text{ and } b_j=b_k}} \mu_jp_{B_j}.
\end{align*}
By Lemma~\ref{lem:projections}(iii), we must have $\mu_jp_{B_j}=0$ for every $j$ such that $a_j=a_k$ and $b_j=b_k$, contradicting the condition $\mu_js_{a_j}p_{B_j}s_{b_j}^*\neq 0$ in Claim~4. Hence $a_j=b_j$ for all $j$. $\lozenge$

Let $a\in IJ$ and $x\in \Ann_S(I)^{\perp}$ be such that $x-a\in\Ann_{L_R\lspace_0}(I)$. As seen above in Claims~4 and 5, we may write $x=\sum_{i=1}^{m}\lambda_ip_{A_i}$, where $A_i$ are pairwise disjoint and $\lambda_i\neq 0$ for all $i$, and 
\[a=\sum_{j=1}^n \mu_js_{a_j}p_{B_j}s_{a_j}^*.\]
For the remainder of the claims, we retain these assumptions and fix these expressions for $a$ and $x$.

{\bf Claim 6}. 
For each $1\leq l\leq m$ and $1\leq k\leq n$ such that $r(A_l,a_k)\neq\emptyset$, we have that
\begin{equation}\label{eq:claim6}
\lambda_lp_{r(A_l,a_k)}=\sum_{\substack{j \,:\, a_j=a_k \text{ and } \\ r(A_l,a_k)\cap B_j\neq \emptyset}}\mu_jp_{r(A_l,a_k)\cap B_j}.
\end{equation}

Proof. Let $l$ and $k$ be as in the statement. Since $x-a\in\Ann_{L_R\lspace_0}(I)$ and $s_{a_k}\in I$, we have  $s_{a_k}^*(x-a)s_{a_k}p_{r(A_l,a_k)}=0$ and, hence, \[s_{a_k}^*xs_{a_k}p_{r(A_l,a_k)}=s_{a_k}^*as_{a_k}p_{r(A_l,a_k)}.\]
By replacing $x$ by $\sum_{i=1}^{m}\lambda_ip_{A_i}$, the left hand side of the above equation becomes
\begin{align*}
s_{a_k}^*xs_{a_k}p_{r(A_l,a_k)} &= s_{a_k}^*\left(\sum_{i=1}^{m}\lambda_ip_{A_i}\right)s_{a_k}p_{r(A_l,a_k)}\\
&= \sum_{i=1}^{m}\lambda_is_{a_k}^*s_{a_k}p_{r(A_i,a_k)}p_{r(A_l,a_k)}\\
&= \lambda_lp_{r(a_k)}p_{r(A_l,a_k)}\\
&= \lambda_lp_{r(A_l,a_k)},
\end{align*}
which is the left hand side of Equation~\eqref{eq:claim6}. Now, let's show that $s_{a_k}^*as_{a_k}p_{r(A_l,a_k)}$ is equal to the right hand side of Equation~\eqref{eq:claim6}. Indeed,
\begin{align*}
s_{a_k}^*as_{a_k}p_{r(A_l,a_k)} &= s_{a_k}^*\left(\sum_{j=1}^n \mu_js_{a_j}p_{B_j}s_{a_j}^*\right)s_{a_k}p_{r(A_l,a_k)} \\
&= \sum_{j=1}^n \mu_js_{a_k}^*s_{a_j}p_{B_j}s_{a_j}^*s_{a_k}p_{r(A_l,a_k)} \\
&= \sum_{j \,:\, a_j=a_k}  \mu_jp_{r(a_k)}p_{B_j}p_{r(a_k)}p_{r(A_l,a_k)} \\
&= \sum_{j \,:\, a_j=a_k}  \mu_jp_{B_j}p_{r(A_l,a_k)} \\
&= \sum_{\substack{j \,:\, a_j=a_k \text{ and } \\ r(A_l,a_k)\cap B_j\neq \emptyset}}  \mu_jp_{r(A_l,a_k)\cap B_j},
\end{align*}
as desired. $\lozenge$

{\bf Claim 7.} 
For each $1\leq k\leq n$, we have that
\[B_k=\bigcup_{i=1}^m r(A_i,a_k)\cap B_k.\]

Proof. We will show that $C_k=B_k\setminus\left(\bigcup_{i=1}^m r(A_i,a_k)\right)=\emptyset$. Notice that
\begin{align*}
    xs_{a_k}p_{C_k} &= \sum_{i=1}^m\lambda_ip_{A_i}s_{a_k}p_{C_k}\\
    &= \sum_{i=1}^m\lambda_is_{a_k}p_{r(A_i,a_k)}p_{C_k}\\
    &= \sum_{i=1}^m\lambda_is_{a_k}p_{r(A_i,a_k)\cap C_k}\\
    &= 0.
\end{align*}
Then
\begin{align*}
0 &= s^*_{a_k}(x-a)s_{a_k}p_{C_k}\\
&= -s^*_{a_k}as_{a_k}p_{C_k} \\
&= -s^*_{a_k}\left(\sum_{j=1}^n \mu_js_{a_j}p_{B_j}s_{a_j}^*\right)s_{a_k}p_{C_k} \\
&= -\sum_{j \,:\, a_j=a_k} \mu_jp_{B_j}p_{C_k}\\
&= -\mu_kp_{C_k},
\end{align*}
where the last equality follows from $B_j\cap C_k \scj B_j\cap B_k=\emptyset$ if $a_j=a_k$ and $j\neq k$. By hypothesis, $\mu_k\neq 0$ and, since $C_k\scj B_k \in \acf_{a_k}$, by Lemma~\ref{lem:projections} we get that $C_k=\emptyset$. $\lozenge$

{\bf Claim 8.} 
For each $1\leq l\leq m$ and each $c\in\alf$ such that $r(A_l,c)\neq\emptyset$, there exists $1\leq k\leq n$ such that $a_k=c$. Furthermore, $A_l\in\acf_{reg}$.

Proof. As in the above claims, $s^*_cxs_cp_{r(A_l,c)}=s^*_cas_cp_{r(A_l,c)}$. Since $r(A_l,c)\neq\emptyset$, we have that $s^*_cxs_cp_{r(A_l,c)}\neq0$. Therefore, $s^*_cas_cp_{r(A_l,c)}\neq0$. Hence, there is $1\leq k\leq n$ such that $a_k=c$. This means that $\lbf(A_l\dgraph^1) = \{c\in\alf \mid r(A_l, c)\neq\emptyset\} \scj \{a_1,a_2,\ldots,a_n\}$, from which we obtain that $|\lbf(A_l\dgraph^1)|<\infty$. Now, since $x\in \Ann_S(I)^{\perp}$, we have $A_l\not\scj \dgraph^0_{sink}$, which says that there exists $c\in\alf$ such that $r(A_l, c)\neq\emptyset$. In other words, $|\lbf(A_l\dgraph^1)|>0$, which shows that $A_l\in\acf_{reg}$. $\lozenge$

{\bf Claim 9.} 
We have that $x=a$ and, in particular, Condition~(3) of Theorem~\ref{hora} holds.

Proof. By Claim~8, for each $1\leq i \leq m$, we have $A_i\in\acf_{reg}$. Using Definition~\ref{def:LPA}(v), we get
\[x = \sum_{i=1}^m\lambda_ip_{A_i} = \sum_{i=1}^m\lambda_i\left(\sum_{c\in\lbf(A_i\dgraph^1)} s_cp_{r(A_i,c)}s^*_c\right) = \sum_{i=1}^m\sum_{c\in\lbf(A_i\dgraph^1)}s_c\lambda_ip_{r(A_i,c)}s^*_c.\]
By Claim~8, for each $c$ in the above sum, there exists $1\leq k\leq n$ such that $a_k=c$. Now, using Claim~6 with $c$ and $i$ in place of $a_k$ and $l$, respectively, we obtain
\begin{align*}
x &= \displaystyle\sum_{i=1}^m\sum_{c\in\lbf(A_i\dgraph^1)}s_c\lambda_ip_{r(A_i,c)}s^*_c \\
  &= \displaystyle\sum_{i=1}^m\sum_{c\in\lbf(A_i\dgraph^1)}s_c\left(\sum_{\substack{j\,:\,a_j=c \text{ and } \\ r(A_i,c)\cap B_j\neq \emptyset}}\mu_jp_{r(A_i,c)\cap B_j}\right)s^*_c \\
  &= \displaystyle\sum_{i=1}^m\sum_{c\in\lbf(A_i\dgraph^1)} \ \sum_{\substack{j\,:\,a_j=c \text{ and } \\ r(A_i,c)\cap B_j\neq \emptyset}}\mu_js_{a_j}p_{r(A_i,a_j)\cap B_j}s_{a_j}^*,
\end{align*}
where in the last equality we just replaced $c$ with $a_j$. Since $p_{r(A_i,a_j)\cap B_j}=0$ if $r(A_i,a_j)\cap B_j=\emptyset$, we can remove the condition $r(A_i,c)\cap B_j\neq \emptyset$ in the above sum, obtaining
\begin{align*}
x &= \displaystyle\sum_{i=1}^m\sum_{c\in\lbf(A_i\dgraph^1)} \ \sum_{\substack{j\,:\,a_j=c \text{ and } \\ r(A_i,c)\cap B_j\neq \emptyset}}\mu_js_{a_j}p_{r(A_i,a_j)\cap B_j}s_{a_j}^* \\
  &= \displaystyle\sum_{i=1}^m\sum_{c\in\lbf(A_i\dgraph^1)} \ \sum_{j\,:\,a_j=c}\mu_js_{a_j}p_{r(A_i,a_j)\cap B_j}s_{a_j}^* \\
  &= \displaystyle\sum_{i=1}^m\sum_{j\,:\,a_j\in\lbf(A_i\dgraph^1)}\mu_js_{a_j}p_{r(A_i,a_j)\cap B_j}s_{a_j}^* \\
  &=\sum_{i=1}^m\sum_{j=1}^n\mu_js_{a_j}p_{r(A_i,a_j)\cap B_j}s_{a_j}^*,
 \end{align*}
where the last equality follows from the fact that if $a_j\notin \lbf(A_i\dgraph^1)$, then $r(A_i,a_j)=\emptyset$. On the other hand, using Claim~7, we have
\begin{align*}
    a  &= \sum_{j=1}^n \mu_js_{a_j}p_{B_j}s_{a_j}^* \\
    &= \sum_{j=1}^n \mu_js_{a_j}\left(\sum_{i=1}^m p_{r(A_i,a_j)\cap B_j}\right)s_{a_j}^*\\
    &=\sum_{i=1}^m\sum_{j=1}^n \mu_js_{a_j}p_{r(A_i,a_j)\cap B_j}s_{a_j}^*,
\end{align*}
which shows that $x=a$, proving Claim~9 and completing the proof of Condition~(3). $\lozenge$

To show Condition~(4), let $x\in \Ann_S(I)\cap\Ann_S(I)^{\perp}$. By Claim~1, $x=\sum_{j=1}^n\lambda_jp_{A_j}$, where each $A_j \scj\dgraph^0_{sink}$.  Lemmas~\ref{disjoint} and \ref{l:orthogonalhspan} imply that we may also suppose the sets $A_j$ are pairwise disjoint. Since $x\in\Ann_S(I)^{\perp}$, and each $p_{A_k}\in\Ann_S(I)$, we get
\[0 = p_{A_k}x = p_{A_k}\sum_{j=1}^n\lambda_jp_{A_j} = \lambda_kp_{A_k}.\]
Since $k$ is arbitrary, $x=\sum_{j=1}^n\lambda_jp_{A_j}=0$, proving Condition~(4).

Finally, notice that $S$ contains the projections $p_A$, $I$ contains each $s_a$ and $J$ contains each $s_a^*$, so that $L_R\lspace$ is the subring of itself generated by $S,I,J$. Hence, it now follows from Theorem~\ref{hora} that we have an $S$-system $(J,I,\psi)$ such that $L_R\lspace$ is graded isomorphic to the Cuntz-Pimsner ring $\mathcal O_{(J,I,\psi)}$.
\end{proof} 

Using Theorem~\ref{LPACPR} and \cite[Corollary~5.4]{CO55} we obtain the following  Graded Uniqueness Theorem for Leavitt labelled path algebras:  

\begin{corollary}\label{GUT}[Graded Uniqueness Theorem](cf. \cite[Corollary~5.4]{CO55}) Let $R$ be a ring and $\lspace$ be a normal labelled space. If $A$ is a $\mathbb Z$-graded ring and $\eta : L_R\lspace \rightarrow A$ is a graded ring homomorphism with $\eta(rp_A)\neq 0$ for all non-empty $A\in\acf$ and all non-zero $r\in R$, then $\eta$ is injective.
\end{corollary}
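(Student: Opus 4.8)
The plan is to reduce the statement to the graded uniqueness theorem for Cuntz-Pimsner rings by means of the realization established in Theorem~\ref{LPACPR}. By that theorem there is a graded isomorphism $L_R\lspace \cong \mathcal{O}_{(J,I,\psi)}$ identifying the coefficient subring of the $S$-system with $S = \vecspan_R\{p_A \mid A \in \acf\}$. Transporting $\eta$ across this isomorphism produces a graded homomorphism out of $\mathcal{O}_{(J,I,\psi)}$, and \cite[Corollary~5.4]{CO55} asserts that such a homomorphism is injective if and only if its restriction to the coefficient ring $S$ is injective. Hence the whole problem collapses to a single point: showing that $\eta|_S$ is injective. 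Here one should first confirm that the grading on $L_R\lspace$ from Proposition~\ref{prop:grading} is exactly the one carried by the Cuntz-Pimsner realization, so that ``graded homomorphism'' has the same meaning on both sides and \cite[Corollary~5.4]{CO55} applies verbatim.

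To establish injectivity of $\eta|_S$, I would take an arbitrary $x \in S$ with $\eta(x) = 0$ and deduce $x = 0$. Writing $x = \sum_{i=1}^n r_i p_{A_i}$, the first move is to apply Lemma~\ref{l:orthogonalhspan} (which rests on Lemma~\ref{disjoint}) to re-express $x = \sum_{j=1}^m s_j p_{C_j}$ with the $C_j \in \acf$ pairwise disjoint; discarding any $C_j = \emptyset$ (as $p_{\emptyset} = 0$) lets one assume every $C_j$ is non-empty. Disjointness makes the $p_{C_j}$ mutually orthogonal, since $p_{C_j}p_{C_k} = p_{C_j \cap C_k} = p_{\emptyset} = 0$ for $j \neq k$ by Definition~\ref{def:LPA}(i). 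Using that $R$ is central in $L_R\lspace$, I would then compute, for each fixed $k$,
\[
0 = \eta(p_{C_k})\,\eta(x) = \eta\big(p_{C_k}\, x\big) = \sum_{j=1}^m \eta\big(s_j\, p_{C_k \cap C_j}\big) = \eta\big(s_k\, p_{C_k}\big).
\]
Since $C_k \neq \emptyset$, the standing hypothesis $\eta(r p_A) \neq 0$ for $r \neq 0$ and $A \neq \emptyset$ forces $s_k = 0$. As $k$ was arbitrary, all coefficients vanish, so $x = \sum_j s_j p_{C_j} = 0$, giving injectivity of $\eta|_S$.

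Combining the two steps, \cite[Corollary~5.4]{CO55} then yields injectivity of $\eta$ on all of $L_R\lspace$. The part requiring the most care is the interface between the hypothesis, which is phrased only in terms of the generators $p_A$, and the injectivity-on-$S$ criterion supplied by the Cuntz-Pimsner graded uniqueness theorem: the disjointification of Lemma~\ref{l:orthogonalhspan} is precisely the bridge, upgrading the generator-level condition to injectivity on the entire degree-zero coefficient ring $S$. I expect no genuine obstacle beyond verifying that the two gradings coincide and that the coefficient ring appearing in \cite[Corollary~5.4]{CO55} is the same $S$ produced by Theorem~\ref{LPACPR}.
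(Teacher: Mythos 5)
Your proposal is correct and follows essentially the same route as the paper: reduce to injectivity on the coefficient ring $S=\vecspan_R\{p_A \mid A\in\acf\}$ via Theorem~\ref{LPACPR} together with \cite[Corollary~5.4]{CO55}, then use the disjointification of Lemma~\ref{l:orthogonalhspan} and multiplication by a single $p_{C_k}$ to isolate one term and invoke the hypothesis $\eta(rp_A)\neq 0$. The only cosmetic difference is that the paper argues by contradiction on one coefficient while you show all coefficients vanish directly; the substance is identical.
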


\begin{proof}
Let $S=\vecspan\{p_A:A\in \acf\}\scj L_R\lspace$ as in the proof of Theorem \ref{LPACPR}. In order to prove that $\eta$ is injective, due to \cite[Corollary~5.4]{CO55} and \cite[Theorem~3.1]{CFHL}, we have to prove that $\eta(x)\neq 0$ for all $x\in S\setminus\{0\}$. By Lemma~\ref{l:orthogonalhspan}, if $x\neq 0$, then there is a family $\{A_1,\ldots,A_n\}$ of pairwise disjoint non-empty elements of $\acf$ and $r_1,\ldots,r_n\in R\setminus\{0\}$ such that
\[x=\sum_{i=1}^n r_ip_{A_i}.\]
If $\eta(x)=0$, then
\[0=\eta(x)=\eta(x)\eta(p_{A_1})=\eta(xp_{A_1})=\eta(r_1p_{A_1}),\]
however $\eta(r_1p_{A_1})\neq 0$ by hypothesis, arriving at a contradiction.
\end{proof}

We can now prove that the homomorphism $\Phi$ of Proposition~\ref{prop:OntoHomLeavittPartialSkew} is an isomorphism.
\begin{theorem}\label{thm:SkewRingLeavittIsom} 
    The graded homomorphism $\Phi: L_R\lspace \to \Lc(\ftight,R)\rtimes_{\hat{\varphi}} \mathbb{F}$ is injective, and therefore a graded isomorphism.
\end{theorem}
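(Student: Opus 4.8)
The plan is to apply the Graded Uniqueness Theorem (Corollary~\ref{GUT}) directly to $\Phi$. We already know from Proposition~\ref{prop:OntoHomLeavittPartialSkew} that $\Phi$ is a surjective $\Z$-graded ring homomorphism, so the only thing missing for $\Phi$ to be an isomorphism is injectivity. Corollary~\ref{GUT} reduces injectivity to a single hypothesis: that $\Phi(rp_A)\neq 0$ for every non-empty $A\in\acf$ and every non-zero $r\in R$. Thus the entire proof amounts to verifying this one condition.

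First I would record that $\Phi(rp_A)=r\,1_A\delta_\eword$, where $1_A$ is the characteristic function of $V_{(\eword,A,\eword)}\subseteq\ftight$. The key observation is that this element is non-zero precisely when $V_{(\eword,A,\eword)}$ is non-empty: since $r\neq 0$ in the coefficient ring and characteristic functions of non-empty sets are non-zero elements of $\Lc(\ftight,R)$, the product $r\,1_A\delta_\eword$ is non-zero as soon as $1_A\neq 0$, i.e.\ as soon as $V_{(\eword,A,\eword)}\neq\emptyset$. So everything hinges on showing that $A\neq\emptyset$ forces $V_{(\eword,A,\eword)}\neq\emptyset$.

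For this step I would invoke Remark~\ref{rmk:V_e.not.empty}: by the general theory of tight spectra, $\ftight$ contains all ultrafilters of $E(S)$, and a Zorn's lemma argument guarantees that $V_e\neq\emptyset$ for every non-zero idempotent $e\in E(S)$. Here the relevant idempotent is $e=(\eword,A,\eword)$, which is a non-zero element of $E(S)$ exactly when $A\in\acf$ is non-empty (recall $E(S)=\{(\alpha,A,\alpha)\mid \alpha\in\awstar,\ A\in\acfra\}\cup\{0\}$, and the zero of $S$ corresponds to $A=\emptyset$). Hence $V_{(\eword,A,\eword)}\neq\emptyset$ whenever $A\neq\emptyset$, which gives $\Phi(rp_A)\neq 0$ as required.

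With the hypothesis of Corollary~\ref{GUT} verified, that corollary immediately yields that $\Phi$ is injective; combined with the surjectivity from Proposition~\ref{prop:OntoHomLeavittPartialSkew}, $\Phi$ is a bijective graded homomorphism and therefore a graded isomorphism. I expect no genuine obstacle here, since the substantive work has already been done in building the Cuntz-Pimsner realization (Theorem~\ref{LPACPR}) that underlies Corollary~\ref{GUT}; the main point worth being careful about is simply the bookkeeping that $\Phi(rp_A)=r\,1_A\delta_\eword$ and that non-emptiness of the basic open set $V_{(\eword,A,\eword)}$ is exactly what non-vanishing demands. Indeed, this is nothing more than the statement already proved in Lemma~\ref{lem:projections}(ii), so one could alternatively cite that lemma for the non-vanishing and then apply Corollary~\ref{GUT}.
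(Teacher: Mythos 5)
Your proposal is correct and is essentially identical to the paper's own proof: the paper likewise deduces injectivity from Corollary~\ref{GUT} by noting that $\Phi$ is graded and that $\Phi(rp_A)=r1_A\delta_\eword\neq 0$ for all $r\in R\setminus\{0\}$ and $A\in\acf\setminus\{\emptyset\}$, which is exactly your verification via the non-emptiness of $V_{(\eword,A,\eword)}$ (Remark~\ref{rmk:V_e.not.empty}). One small caveat on your closing remark: Lemma~\ref{lem:projections}(ii) asserts non-vanishing of $rp_A$ inside $L_R\lspace$ (its proof is what uses $r1_A\delta_\eword\neq 0$ in the skew group ring), so citing the lemma's statement would give the wrong direction --- it is the fact established within its proof, not the lemma itself, that supplies the hypothesis of Corollary~\ref{GUT}.
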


\begin{proof}
Since the map $\Phi$ of Proposition \ref{prop:OntoHomLeavittPartialSkew} is graded and for $r\in R\setminus\{0\}$ and $A\in\acf\setminus\{\emptyset\}$, we have that $r1_A\delta_{\omega}\neq 0$, the result follows from Corollary~\ref{GUT}.
\end{proof}

\section{Characterization of the Leavitt labelled path algebra as a Steinberg algebra}\label{section:steinberg.algebra}

Let $R$ be a  unital commutative ring. If $G$ is a groupoid, then $G^{(0)}$ denotes the set of units of $G$. If $G$ is a  topological groupoid, then $G$ is  \textit{\'etale} if the range and source maps $r,s:G\to G^{(0)}$ are local homeomorphisms. If, in addition, $G$ has basis of compact-open sets, then $G$ is an \textit{ample} groupoid. The \textit{Steinberg algebra} $A_R(G)$ associated with an ample groupoid $G$ is the $R$-algebra spanned by characteristic functions $1_U$, with $U\scj G$ a compact-open bisection. Multiplication is given by convolution: 
\[f*g(\gamma) = \sum_{s(\alpha)=s(\gamma)} f(\gamma\alpha^{-1})g(\alpha).  \]

Let 
\begin{equation*} \label{dfn:trans.groupoid}
	\mathbb{F} \ltimes_{\varphi} \ftight=\{(t,\xi)\in \mathbb{F} \times \ftight \mid \xi\in V_{t}\}
\end{equation*}
denote the transformation groupoid associated with the partial action $\varphi$ (see  (\ref{eq:PartialAction})) as defined in  \cite{MR2045419}. Then $\mathbb{F} \ltimes_{\varphi} \ftight$ is an ample groupoid \cite[Proposition 5.4]{GillesDanie}. We identify the unit space of $\mathbb{F} \ltimes_{\varphi} \ftight$ with $\ftight$. 

The following theorem realizes a Leavitt labelled path algebra as a Steinberg algebra.
\begin{theorem} \label{thm:LeavittSteinbergIsom}
There is an isomorphism $\Psi: L_R\lspace \to  A_R(\mathbb{F} \ltimes_{\varphi} \ftight)$ such that 
    \begin{equation*}
          \Psi(p_A) = 1_{\{\eword\}\times V_{(\eword,A,\eword)}}, \hspace{0.5cm}
          \Psi(s_a) = 1_{\{a\}\times V_{(a,r(a),a)}}, \hspace{0.5cm}
          \Psi(s^*_a) = 1_{\{a^{-1}\}\times V_{(\eword,r(a),\eword)}}, 
    \end{equation*}
    for every $A\in \acf$ and $a\in \alf$.
\end{theorem}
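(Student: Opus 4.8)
The plan is to combine the two realizations we already have: the isomorphism $\Phi : L_R\lspace \to \Lc(\ftight,R)\rtimes_{\hat{\varphi}} \mathbb{F}$ from Theorem~\ref{thm:SkewRingLeavittIsom}, and a standard isomorphism between the partial skew group ring and the Steinberg algebra of the transformation groupoid. Indeed, for a topological partial action $\varphi$ of a discrete group $\F$ on a locally compact, Hausdorff, totally disconnected space such as $\ftight$, there is a well-known graded isomorphism
\[
\Theta : \Lc(\ftight,R)\rtimes_{\hat{\varphi}} \mathbb{F} \longrightarrow A_R(\mathbb{F} \ltimes_{\varphi} \ftight),
\qquad
f_t\delta_t \longmapsto \widetilde{f_t},
\]
where $\widetilde{f_t}$ is the function supported on $\{t\}\times V_t$ given by $\widetilde{f_t}(t,\xi)=f_t(\xi)$ (this is the content of, e.g., the partial skew group ring versus groupoid algebra comparison; cf.\ the $\F$-graded decompositions of both sides). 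The composite $\Psi := \Theta\circ\Phi$ is then the desired isomorphism, and one only needs to check that it sends the generators to the claimed characteristic functions.

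First I would record that $\Theta$ is a well-defined $R$-algebra isomorphism. The key points are that $\Lc(V_t,R)\delta_t$ is $R$-linearly isomorphic to the span of characteristic functions of compact-open bisections contained in $\{t\}\times V_t$, that the convolution product on $A_R(\mathbb{F} \ltimes_{\varphi} \ftight)$ restricted to these pieces reproduces exactly the multiplication rule $(a\delta_s)(b\delta_t)=\ph_s(\ph_{s^{-1}}(a)b)\delta_{st}$ coming from the dual partial action, and that both sides carry compatible $\F$-gradings indexed by $t$. This comparison is purely formal once one unwinds how $\varphi_{t^{-1}}$ appears in $\hat{\varphi}_t$ and how $\gamma\alpha^{-1}$ decomposes in the transformation groupoid; it is routine and I would cite the standard reference rather than grind through it. The relevant topological hypotheses on $\ftight$ are guaranteed by the remarks preceding the statement: the sets $V_{e:e_1,\ldots,e_n}$ form a basis of compact-open sets for a Hausdorff topology, and $\mathbb{F} \ltimes_{\varphi} \ftight$ is ample by \cite[Proposition 5.4]{GillesDanie}.

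Next I would verify the action on generators. Under $\Phi$ we have $\Phi(p_A)=1_A\delta_\eword$, where $1_A$ is the characteristic function on $V_{(\eword,A,\eword)}$; applying $\Theta$ sends this to the characteristic function of $\{\eword\}\times V_{(\eword,A,\eword)}$, which is exactly $\Psi(p_A)$. Similarly $\Phi(s_a)=1_a\delta_a$ with $1_a$ supported on $V_{(a,r(a),a)}$, so $\Theta(1_a\delta_a)=1_{\{a\}\times V_{(a,r(a),a)}}=\Psi(s_a)$, and the same computation with $1_{a^{-1}}$ supported on $V_{(\eword,r(a),\eword)}$ gives $\Psi(s_a^*)=1_{\{a^{-1}\}\times V_{(\eword,r(a),\eword)}}$. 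Since these generators generate $L_R\lspace$ and both $\Phi$ and $\Theta$ are isomorphisms, $\Psi$ is an isomorphism carrying the generators as stated.

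The main obstacle, and the only place requiring genuine care, is establishing $\Theta$ as a ring isomorphism — specifically matching the convolution product against the twisted partial-action product. The subtlety is bookkeeping the homeomorphisms $\varphi_t$ inside the convolution sum and confirming that products of characteristic functions of the generating bisections land back in the span of such functions; this rests on the fact that $\Lc(\ftight,R)$ is generated by the $1_{V_e}$ (Lemma~\ref{lem:DiagGenerators}) and that the partial action is semi-saturated, so the $\F$-grading refines to the $\Z$-grading via $\alpha\beta^{-1}\mapsto|\alpha|-|\beta|$, making $\Theta$ automatically graded. Once $\Theta$ is in hand the remainder is immediate, so I would present the generator computation in full and relegate the $\Theta$ comparison to a citation of the standard partial skew group ring / Steinberg algebra correspondence.
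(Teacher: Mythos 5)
Your proposal is correct and follows essentially the same route as the paper: the paper's proof also composes the isomorphism of Theorem~\ref{thm:SkewRingLeavittIsom} with the standard partial skew group ring versus Steinberg algebra correspondence (citing \cite[Theorem 3.2]{MR3743184}, which is exactly your map $\Theta$, valid since $\ftight$ is Hausdorff with a basis of compact-open sets). The only difference is cosmetic: the paper omits the generator computation that you write out, so your version is, if anything, slightly more complete.
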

\begin{proof}
    Since $\ftight$ has a basis of compact-open sets, it is totally disconnected. Then, by \cite[Theorem 3.2]{MR3743184} we have that $\Lc(\ftight,R)\rtimes_{\hat{\varphi}} \mathbb{F} \cong  A_R(\mathbb{F} \ltimes_{\varphi} \ftight)$, and by Theorem~\ref{thm:SkewRingLeavittIsom} we have that $ L_R\lspace \cong  \Lc(\ftight,R)\rtimes_{\hat{\varphi}} \mathbb{F}$. Hence, there exists an isomorphism $\Psi$ from $L_R\lspace$ onto  $A_R(\mathbb{F} \ltimes_{\varphi} \ftight)$.
    
    By composing the two isomorphisms referred to in the preceding paragraph, we see that $\Psi$ maps the generators of $L_R\lspace$ as described in the statement of this theorem. We omit this computation. 
\end{proof}

Although Theorem~\ref{thm:LeavittSteinbergIsom} gives an explicit isomorphism defined on the generators, it will be useful in what follows to know how $\Psi$ maps general elements of the form $s_\alpha p_A s^*_\beta$. A short computation shows that 
\begin{equation}\label{eq:IsomOnTriples}
    \Psi(s_\alpha p_A s^*_\beta) = 1_{\{\alpha\beta^{-1}\}\times V_{(\alpha, A\cap r(\alpha)\cap r(\beta),\alpha)}}.
\end{equation}

\begin{proposition}\label{prop:diag}
  The diagonal subalgebra $D(L_R\lspace)$ is isomorphic to $\Lc(\ftight,R)$.
\end{proposition}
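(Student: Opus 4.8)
The plan is to transport everything through the isomorphism $\Psi\colon L_R\lspace\to A_R(\mathbb{F}\ltimes_\varphi\ftight)$ of Theorem~\ref{thm:LeavittSteinbergIsom} and to identify the image of the diagonal with the functions supported on the unit space of the transformation groupoid. Since $\Psi$ is an isomorphism and $D(L_R\lspace)$ is closed under multiplication (its spanning elements multiply back into the same form by Proposition~\ref{prop:properties.of.c*-labelled.space}(ix)), the restriction $\Psi|_{D(L_R\lspace)}$ is an injective $R$-algebra homomorphism, so it suffices to compute its image and recognize it as $\Lc(\ftight,R)$.

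First I would specialize formula~\eqref{eq:IsomOnTriples} to the case $\beta=\alpha$. For $(\alpha,A,\alpha)\in E(S)$ one has $A\in\acfra$, hence $A\subseteq r(\alpha)$ and $A\cap r(\alpha)\cap r(\alpha)=A$, giving
\[\Psi(s_\alpha p_A s_\alpha^*)=1_{\{\eword\}\times V_{(\alpha,A,\alpha)}}.\]
Thus $\Psi$ sends each spanning element of $D(L_R\lspace)$ to a characteristic function supported on the unit space $\{\eword\}\times\ftight$ (identified with $\ftight$), and by $R$-linearity and bijectivity of $\Psi$ we obtain that $\Psi(D(L_R\lspace))$ is precisely the $R$-span of $\{1_{\{\eword\}\times V_e}\mid e\in E(S)\}$.

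Next I would invoke the standard fact for Steinberg algebras that the functions supported on the unit space $G^{(0)}$ form a subalgebra of $A_R(G)$ on which convolution reduces to pointwise multiplication, and that this subalgebra is isomorphic to $\Lc(G^{(0)},R)$ via extension by zero off $G^{(0)}$; under this identification $1_{V_e}\in\Lc(\ftight,R)$ corresponds to $1_{\{\eword\}\times V_e}$. It then remains to verify that the $R$-span of $\{1_{V_e}\mid e\in E(S)\}$ is all of $\Lc(\ftight,R)$. Here the main point requiring care is that I need the \emph{span}, not merely the generated algebra, to exhaust $\Lc(\ftight,R)$: I would observe that for $e,e'\in E(S)$ a filter contains both $e$ and $e'$ exactly when it contains $ee'$ (filters are closed under meets and upward closed), so $V_e\cap V_{e'}=V_{ee'}$ and hence $1_{V_e}1_{V_{e'}}=1_{V_{ee'}}$ lies again in the span. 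Consequently the span is already closed under products, so it coincides with the $R$-algebra generated by $\{1_{V_e}\}$, which is $\Lc(\ftight,R)$ by Lemma~\ref{lem:DiagGenerators}.

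Combining these steps, $\Psi$ restricts to an $R$-algebra isomorphism from $D(L_R\lspace)$ onto the unit-space subalgebra of $A_R(\mathbb{F}\ltimes_\varphi\ftight)$, which is isomorphic to $\Lc(\ftight,R)$, yielding the claim. I do not expect a serious obstacle beyond the bookkeeping identity $V_e\cap V_{e'}=V_{ee'}$, which is exactly what upgrades Lemma~\ref{lem:DiagGenerators} from a statement about the generated algebra to a statement about the linear span and thereby matches the linear definition of $D(L_R\lspace)$.
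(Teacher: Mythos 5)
Your proof is correct and takes essentially the same route as the paper: the paper restricts the partial skew group ring isomorphism $\Phi$ of Theorem~\ref{thm:SkewRingLeavittIsom}, computes $\Phi(s_\alpha p_A s_\alpha^*)=1_{V_{(\alpha,A,\alpha)}}\delta_\eword$, and invokes Lemma~\ref{lem:DiagGenerators} to identify the image with $\Lc(\ftight,R)\delta_\eword$, which is exactly parallel to your restriction of the Steinberg algebra isomorphism $\Psi$ and Equation~\eqref{eq:IsomOnTriples}, since $\Psi$ is by construction $\Phi$ composed with the isomorphism $\Lc(\ftight,R)\rtimes_{\hat{\varphi}}\mathbb{F}\cong A_R(\mathbb{F}\ltimes_\varphi\ftight)$ and the two pictures match under $1_{V_e}\delta_\eword\leftrightarrow 1_{\{\eword\}\times V_e}$. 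The one point where you go beyond the paper's write-up is the observation $V_e\cap V_{e'}=V_{ee'}$, which upgrades Lemma~\ref{lem:DiagGenerators} from a statement about the generated algebra to one about the linear span; the paper leaves this implicit (there it can alternatively be handled by Proposition~\ref{prop:properties.of.c*-labelled.space}(ix), which shows that $D(L_R\lspace)$ is itself closed under products), so your patch is a legitimate and welcome tightening rather than a deviation.
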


\begin{proof}
Since $\Lc(\ftight, R)$ is isomorphic to the subalgebra $\Lc(\ftight,R)\delta_\eword$ of $\Lc(\ftight,R)\rtimes_{\hat{\varphi}} \mathbb{F}$, it suffices to show that $D(L_R\lspace)$ is isomorphic to $\Lc(\ftight,R)\delta_\eword$.

Let $\Phi: L_R\lspace \to \Lc(\ftight,R)\rtimes_{\hat{\varphi}} \mathbb{F}$ be the graded isomorphism of Theorem~\ref{thm:SkewRingLeavittIsom}. Then, for any generator $s_{\alpha}p_As^*_{\alpha}$ of $D(L_R\lspace)$ and $r\in R$, we have  
 \begin{eqnarray*}
      \Phi(rs_{\alpha}p_As^*_{\alpha}) 
      &=& (r1_{\alpha}\delta_{\alpha})(1_{A\cap r(\alpha)}\delta_\eword)(1_{\alpha^{-1}}\delta_{\alpha^{-1}}) \\
      &=& r1_{V_{(\alpha, A\cap r(\alpha), \alpha)}}\delta_{\alpha\alpha^{-1}} \\
      &=& r1_{V_{(\alpha, A, \alpha)}}\delta_{\eword}. 
   \end{eqnarray*}
Therefore, Lemma~\ref{lem:DiagGenerators} implies that $\Phi$ maps the generators of $D(L_R\lspace)$ onto the generators of $\Lc(\ftight,R)\delta_\eword$. Hence $D(L_R\lspace)$ is isomorphic to $\Lc(\ftight,R)\delta_\eword$.
\end{proof}

From \cite[Theorem~4.5]{MR3743184}, we get the following characterization of diagonal preserving isomorphism of Leavitt labelled path algebras (cf.  Corollary~\ref{orb1qnob}).

\begin{corollary}\label{orb2qnob}  Let $\lspacei{i}$, $i=1,2$, be two normal labelled spaces, $(\varphi_i, \ftight_i)_{\mathbb{F}_i}$, $i=1,2$, be the associated partial actions, and suppose that $R$ is an integral domain. Then the following are equivalent:
\begin{enumerate}
    \item There is an isomorphism $\Phi:\Lc(\ftight_1,R)\rtimes_{\hat{\varphi_1}} \mathbb{F}_1 \rightarrow \Lc(\ftight_2,R)\rtimes_{\hat{\varphi_2}} \mathbb{F}_2 $ such that $\Phi (\Lc(\ftight_1, R)) = \Lc(\ftight_2, R) $.
    \item There is an isormorphism $\gamma: L_R\lspacei{1} \rightarrow L_R\lspacei{2}$ preserving diagonals, that is, $\gamma(D(L_R\lspace)_1)=D(L_R\lspace)_2.$
    \item The groupoids $\mathbb{F}_1 \ltimes_{\varphi_1} \ftight_1$ and $ \mathbb{F}_2 \ltimes_{\varphi_2} \ftight_2$ are isomorphic as topological groupoids.
\end{enumerate}
\end{corollary}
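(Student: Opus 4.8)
The plan is to deduce the three equivalences by reducing everything, via the realization results already established, to the groupoid-level reconstruction theorem \cite[Theorem~4.5]{MR3743184}. Concretely, I would obtain (1)$\Leftrightarrow$(3) as a direct application of that theorem to the partial actions $\varphi_i$, and then obtain (1)$\Leftrightarrow$(2) by conjugating with the graded isomorphisms of Theorem~\ref{thm:SkewRingLeavittIsom}, exploiting the identification of the diagonal in Proposition~\ref{prop:diag}.

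For (1)$\Leftrightarrow$(3), I would first record that each $(\varphi_i,\ftight_i)$ is a topological partial action of the discrete group $\mathbb{F}_i$ on the Hausdorff, totally disconnected space $\ftight_i$, so that the transformation groupoid $\mathbb{F}_i \ltimes_{\varphi_i} \ftight_i$ is ample and Hausdorff. As $R$ is an integral domain, the hypotheses of \cite[Theorem~4.5]{MR3743184} are met, and that theorem asserts precisely that a diagonal-preserving isomorphism $\Lc(\ftight_1,R)\rtimes_{\hat{\varphi_1}} \mathbb{F}_1 \to \Lc(\ftight_2,R)\rtimes_{\hat{\varphi_2}} \mathbb{F}_2$ carrying $\Lc(\ftight_1,R)$ onto $\Lc(\ftight_2,R)$ exists if and only if $\mathbb{F}_1 \ltimes_{\varphi_1} \ftight_1$ and $\mathbb{F}_2 \ltimes_{\varphi_2} \ftight_2$ are isomorphic as topological groupoids (equivalently, one may pass through the Steinberg algebras $A_R(\mathbb{F}_i \ltimes_{\varphi_i} \ftight_i)$ using \cite[Theorem~3.2]{MR3743184}). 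This yields (1)$\Leftrightarrow$(3) with no additional work.

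For (1)$\Leftrightarrow$(2), let $\Phi_i : L_R\lspacei{i} \to \Lc(\ftight_i,R)\rtimes_{\hat{\varphi_i}} \mathbb{F}_i$ be the graded isomorphism of Theorem~\ref{thm:SkewRingLeavittIsom}. The key fact is Proposition~\ref{prop:diag} (and its proof), which shows that $\Phi_i$ restricts to an isomorphism $D(L_R\lspacei{i}) \to \Lc(\ftight_i,R)$ (identifying $\Lc(\ftight_i,R)$ with $\Lc(\ftight_i,R)\delta_\eword$). Thus, given $\gamma$ as in (2), the composite $\Phi := \Phi_2 \circ \gamma \circ \Phi_1^{-1}$ is an isomorphism of the partial skew group rings, and combining $\gamma(D(L_R\lspacei{1})) = D(L_R\lspacei{2})$ with $\Phi_i(D(L_R\lspacei{i})) = \Lc(\ftight_i,R)$ forces $\Phi(\Lc(\ftight_1,R)) = \Lc(\ftight_2,R)$, which is (1). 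Conversely, given $\Phi$ as in (1), the composite $\gamma := \Phi_2^{-1} \circ \Phi \circ \Phi_1$ is an isomorphism $L_R\lspacei{1} \to L_R\lspacei{2}$, and the same diagonal identifications give $\gamma(D(L_R\lspacei{1})) = D(L_R\lspacei{2})$, which is (2).

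Since the genuinely substantive work---constructing the partial skew group ring isomorphism (Theorem~\ref{thm:SkewRingLeavittIsom}) and pinning down the image of the diagonal (Proposition~\ref{prop:diag})---has already been carried out, the remaining argument is essentially bookkeeping. The only points demanding care are the exact matching of the diagonal subalgebras under $\Phi_i$ and the verification that the standing hypotheses of \cite[Theorem~4.5]{MR3743184} (integral-domain coefficients, ample Hausdorff groupoid) are in force; the main conceptual obstacle is therefore resolved upstream rather than here.
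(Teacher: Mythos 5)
Your proposal is correct and takes essentially the same route as the paper: the paper obtains this corollary precisely by invoking \cite[Theorem~4.5]{MR3743184} for the equivalence between diagonal-preserving isomorphism of the partial skew group rings and isomorphism of the transformation groupoids, and by transporting diagonal-preserving isomorphisms between $L_R\lspacei{i}$ and $\Lc(\ftight_i,R)\rtimes_{\hat{\varphi_i}}\mathbb{F}_i$ via Theorem~\ref{thm:SkewRingLeavittIsom} and Proposition~\ref{prop:diag}. The conjugation argument you write out for (1)$\Leftrightarrow$(2) is exactly the bookkeeping the paper leaves implicit in its one-line justification.
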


 Contrary to what happens with Leavitt path algebras of graphs and ultragraphs, where the unit, if it exists, can be written only in terms of the projections associated with sets of vertices, we might need to use the elements $s_a$ to describe the unit, if it exists, of a Leavitt labelled path algebra, as we see in the next corollary and example.

\begin{corollary}\label{unital} Let $\lspace$ be a normal labelled space. The following are equivalent:
    \begin{enumerate}[(i)]
        \item\label{item:LPAunital} $L_R\lspace$ is unital,
        \item\label{item:ftight.compact} $\ftight$ is compact,
        \item\label{item:unit} $\acf$ has a top element $I$ and the set $\alfg{F}=\{a\in\alf \mid  r(a)\setminus r(I,a)\neq\emptyset\}$ is finite.
    \end{enumerate}
    In this case, with the notation of item \eqref{item:unit},
    \[p_I+\sum_{a\in \alfg{F}}{s_ap_{r(a)\setminus r(I,a)}s_a^*}\]
    is the unit of $L_R\lspace$.
\end{corollary}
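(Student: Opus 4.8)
The plan is to run the entire argument through the Steinberg algebra realization $L_R\lspace\cong A_R(\mathbb{F}\ltimes_{\varphi}\ftight)$ of Theorem~\ref{thm:LeavittSteinbergIsom}, whose unit space is identified with $\ftight$. The equivalence (i)$\Leftrightarrow$(ii) is then immediate from the standard fact that the Steinberg algebra of an ample groupoid $G$ (Hausdorff here, since $\ftight$ is) is unital if and only if $G^{(0)}$ is compact, with unit $1_{G^{(0)}}$. All the genuine content therefore lies in the combinatorial equivalence (ii)$\Leftrightarrow$(iii) and in identifying the unit, and I would derive both from a single explicit clopen partition of $\ftight$.

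The key geometric input I would establish is that, whenever $\acf$ has a top element $I$,
\[\ftight = V_{(\eword,I,\eword)}\ \sqcup\ \bigsqcup_{a\in\alfg{F}} V_{(a,\,r(a)\setminus r(I,a),\,a)}.\]
Here the first block equals the closed set $V=\bigcup_{A\in\acf}V_{(\eword,A,\eword)}$ of Lemma~\ref{lem:closed}: since $I$ is the top and filters are upward closed, $A\in\xi_0\Rightarrow I\in\xi_0$, so $V=V_{(\eword,I,\eword)}$. The remaining blocks are meant to exhaust $\ftight\setminus V=\{\xi\mid\xi_0=\emptyset\}$. Proving this partition is the main obstacle.

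To prove it, I would take $\xi\in\ftight$ with associated word $\alpha$ and split on $\xi_0$. If $\xi_0\neq\emptyset$ then $I\in\xi_0$ and $\xi\in V_{(\eword,I,\eword)}$. If $\xi_0=\emptyset$ then $\alpha\neq\eword$; writing $a=\alpha_1$, the completeness relation $\xi_0=\{A\in\acf\mid r(A,a)\in\xi_1\}$ forces $r(I,a)\notin\xi_1$, while $\xi_1$ is a nonempty filter in $\acfrg{a}$ and hence contains its top $r(a)$. The delicate point is that $\xi_1$ is an \emph{ultrafilter}: Theorem~\ref{thm:TightFiltersType} gives this directly in the infinite-type case, and in the finite-type case I would prove it by downward induction from the ultrafilter $\xi_{|\alpha|}$, using weak left-resolving to show that for $B\in\acfrg{\alpha_{1,n}}$ (with $B^c=r(\alpha_{1,n})\setminus B$) one has $r(B,\alpha_{n+1})\cap r(B^c,\alpha_{n+1})=\emptyset$ and $r(B,\alpha_{n+1})\cup r(B^c,\alpha_{n+1})=r(\alpha_{1,n+1})$, so that primeness of $\xi_{n+1}$ descends to $\xi_n$. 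Primeness of $\xi_1$ applied to the disjoint decomposition $r(a)=r(I,a)\sqcup(r(a)\setminus r(I,a))$ then yields $r(a)\setminus r(I,a)\in\xi_1$, i.e.\ $\xi\in V_{(a,r(a)\setminus r(I,a),a)}$ with $a\in\alfg{F}$ (nonemptiness of this set forces $a\in\alfg F$). Disjointness across distinct letters is automatic, as a filter has a unique first letter, and the reverse inclusion follows from the same completeness relation (if $\xi_0\neq\emptyset$ then $r(I,a)\in\xi_1$, contradicting $r(a)\setminus r(I,a)\in\xi_1$).

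With the partition established the rest is short. For (iii)$\Rightarrow$(ii), the partition writes $\ftight$ as a \emph{finite} union of basic compact-open sets $V_e$, hence $\ftight$ is compact. For (ii)$\Rightarrow$(iii), compactness of the closed set $V$ extracts a finite subcover $V=\bigcup_i V_{(\eword,A_i,\eword)}=V_{(\eword,I,\eword)}$ with $I=\bigcup_i A_i\in\acf$; that $I$ is a genuine top follows because any $A$ with $A\setminus I\neq\emptyset$ would make $V_{(\eword,A\setminus I,\eword)}$ a nonempty (Remark~\ref{rmk:V_e.not.empty}) subset of $V=V_{(\eword,I,\eword)}$, forcing $\emptyset=(A\setminus I)\cap I\in\xi_0$, a contradiction; here normality supplies $A\setminus I\in\acf$. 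Then $V$ is clopen and compact, so $\ftight\setminus V$ is clopen and compact, and being the disjoint union of the nonempty open sets $V_{(a,r(a)\setminus r(I,a),a)}$, $a\in\alfg{F}$, compactness forces $\alfg{F}$ finite. Finally, for the unit formula I would pull $1_{G^{(0)}}=1_{\{\eword\}\times\ftight}$ back through $\Psi$: by \eqref{eq:IsomOnTriples} the indicators of the partition blocks are exactly $\Psi(p_I)$ and $\Psi(s_a p_{r(a)\setminus r(I,a)}s_a^*)$ (using $(r(a)\setminus r(I,a))\cap r(a)=r(a)\setminus r(I,a)$ and $aa^{-1}=\eword$), so summing over the partition shows $p_I+\sum_{a\in\alfg{F}}s_a p_{r(a)\setminus r(I,a)}s_a^*$ maps to the unit of $A_R(\mathbb{F}\ltimes_\varphi\ftight)$ and is therefore the unit of $L_R\lspace$.
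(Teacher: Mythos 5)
Your proposal is correct and follows essentially the same route as the paper's proof: the Steinberg algebra realization handles (i)$\Leftrightarrow$(ii), and the clopen decomposition $\ftight = V_{(\eword,I,\eword)}\sqcup\bigsqcup_{a\in\alfg{F}}V_{(a,\,r(a)\setminus r(I,a),\,a)}$ drives both (ii)$\Leftrightarrow$(iii) and the identification of the unit, exactly as in the paper. The only difference is one of detail: the paper asserts this decomposition without proof, while you supply the verification, including the downward-induction argument (via weak left-resolving) that the intermediate filters $\xi_n$ in the complete family of a finite-type tight filter are ultrafilters, which is precisely the point needed to conclude $r(a)\setminus r(I,a)\in\xi_1$ from $r(I,a)\notin\xi_1$.
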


\begin{proof}
The equivalence of items \eqref{item:LPAunital} and \eqref{item:ftight.compact} follows from \cite[Proposition 4.11]{BenGroupoid} and Theorem~\ref{thm:LeavittSteinbergIsom}.

We prove that \eqref{item:ftight.compact} implies \eqref{item:unit}. For that, suppose that $\ftight$ is compact. By Lemma~\ref{lem:closed}, $V=\bigcup_{A\in\acf}V_{(\eword,A,\eword)}$ is closed in $\ftight$ and therefore compact. Then there exist $A_1,\ldots,A_n$ such that $V=\bigcup_{i=1}^nV_{(\eword,A_i,\eword)}$. We claim that $I=\bigcup_{i=1}^nA_i$ is the top element of $\acf$. If there exists $B\in\acf$ with $I\subsetneq B$, then an ultrafilter $\xi$ containing $(\eword,B\setminus I,\eword)$ is in $V$ but not in $V_{(\eword,A_i,\eword)}$ for any $i$, which is a contradiction. Now, if $\alfg{F}=\{a\in\alf \mid r(a)\setminus r(I,a)\neq\emptyset\}$ is infinite, then we could write $\ftight$ as an infinite disjoint union of open sets as follows
\begin{equation}\label{eq:ftight.dec}
    \ftight=V_{(\eword,I,\eword)}\cup\bigcup_{a\in\alfg{F}}V_{(a,r(a)\setminus r(I,a),a)},
\end{equation}
which contradicts the compactness of $\ftight$.

For \eqref{item:unit} implies \eqref{item:ftight.compact}, suppose that $\acf$ has a top element $I$ and the set $\alfg{F}=\{a\in\alf \mid r(a)\setminus r(I,a)\neq\emptyset\}$ is finite. Then the decomposition of $\ftight$ in Equation~\eqref{eq:ftight.dec} still holds and is a finite union of compact sets, and hence $\ftight$ is compact.

As for the expression for the unit, by \cite[Proposition 4.11]{BenGroupoid}, it must correspond to the characteristic function of $\ftight$ via the correspondence given in Theorem~\ref{thm:LeavittSteinbergIsom}. From Equation~\eqref{eq:ftight.dec}, we get that
\[1_{\ftight}=1_{V_{(\eword,I,\eword)}}+\sum_{a\in\acfg{F}}1_{V_{(a,r(a)\setminus r(I,a),a)}}.\]
Then, by Theorem~\ref{thm:LeavittSteinbergIsom} and Equation~\eqref{eq:IsomOnTriples}, we conclude that
\[p_I+\sum_{a\in \alfg{F}}{s_ap_{r(a)\setminus r(I,a)}s_a^*}\]
is the unit of $L_R\lspace$.
\end{proof}

\begin{example}
Consider a graph with only one edge $e$ with $s(e)\neq r(e)$, and a label $a$. If we use $\acf=\{\emptyset,r(e)\}$, then $r(e)$ is the top element of $\acf$. For the only label, we have that $r(a)\setminus r(r(a),a)=\{r(e)\}\setminus\emptyset\neq\emptyset$. By Corollary~\ref{unital}, the unit is then $p_{r(e)}+s_ap_{r(e)}s_a^{*}=p_{r(e)}+s_as_a^*$. Also notice that the only non-trivial projection $p_{r(e)}$ is such that $p_{r(e)}s_a=0$, so the unit cannot be written using only the generating projections.
\end{example}

\begin{remark}
In the non-unital case, we can look at some weaker properties such as having enough idempotents or having local units (see \cite[Definition~1.2.10]{AbrAraMol}), the former implying the latter. It is shown in \cite[Lemma~1.2.12]{AbrAraMol} that every Leavitt path algebra has enough idempotents, however, this does not need to be the case for Leavitt labelled path algebras (e.g. $\Lc(\overline{\nn},\mathbb{Z})$, where $\overline{\nn}$ is the one-point compactification of the natural numbers as seen in Example~\ref{ex:commutative}). Nevertheless, every Leavitt labelled path algebra has local units by Theorem~\ref{thm:LeavittSteinbergIsom} and \cite[Proposition~2.1]{BenModule}.
\end{remark}

\section{Examples}\label{section:examples}

In the first few examples, we show that Leavitt path algebras of graphs and ultragraphs can be seen as Leavitt labelled path algebras, and we give some sufficient conditions on the labelled space for its algebra to be isomorphic with a usual Leavitt path algebra. In the second part, we prove that any torsion-free commutative algebra generated by idempotents can be seen as a Leavitt labelled path algebra, and in particular, we give an example of a Leavitt labelled path algebra which is not isomorphic to the Leavitt path algebra associated with any graph or ultragraph. Finally, we provide a partial converse for Theorem~\ref{thm:SkewRingLeavittIsom}, which shows that certain partial skew group rings are isomorphic to Leavitt labelled path algebras.

\subsection{Leavitt path algebras of graphs and ultragraphs}

\begin{example}
Leavitt path algebras of graphs.
\end{example}
As in \cite[Example~3.3.(i)]{BP1}, given a graph $\dgraph$ we can build a labelled space where $\alf=\dgraph^1$, $\lbf:\dgraph^1\to\alf$ is the identity function and $\acf$ is the set of all finite subsets of $\dgraph^0$. Then $\lspace$ is a normal labelled space.

Let $L_R(\dgraph)$ be the usual Leavitt path algebra of the graph with generators $\{v\mid v\in\dgraph^0\}$ and $\{e,e^*\mid e\in\dgraph^1\}$ (see \cite{AbrAraMol}). There is an isomorphism $\phi:L_R\lspace\to L_R(\dgraph)$ given by $\phi(p_A)=\sum_{v\in A}v$ and $\phi(s_e)=e$. Indeed, that $\phi$ is a homomorphism follows from the universal property of $L_R\lspace$, that it is injective follows from the graded uniqueness theorem (Corollary~\ref{GUT}), and that it is surjective follows from observing that its image contains all the generators of $L_R(\dgraph)$.

\begin{example}
Leavitt path algebras of ultragraphs.
\end{example}

Recall that an \emph{ultragraph} is a quadruple $\mathcal{G}=(G^0, \mathcal{G}^1, r,s)$ consisting of two countable sets $G^0, \mathcal{G}^1$, a map $s:\mathcal{G}^1 \to G^0$, and a map $r:\mathcal{G}^1 \to \powerset{G^0}\setminus \{\emptyset\}$, where $\powerset{G^0}$ is the power set of $G^0$.

As in \cite[Example~3.3.ii]{BP1} and \cite[Definition~2.7]{GDD2}, given an ultragraph $(G^0,\mathcal{G}^1,r,s)$, we can build an associated labelled space: let $\dgraph = \dgraph_{\mathcal{G}}$ be the graph such that  $\dgraph^0 = G^0$, $\dgraph^1 = \{ ( s(e) , w ) \mid e \in \mathcal{G}^1 , w \in r (e)
\}$ and the range $r': \dgraph^1 \to \dgraph^0$  and source $s': \dgraph^1 \to \dgraph^0$ maps are defined by $r' ( v , w ) = w$ and $s' ( v , w ) = v$. Set $\alf=\mathcal{G}^1$ and define $\lbf: \dgraph^1 \to \alf$ by $\lbf ((s(e),w)) = e$. Let $\mathcal{G}^0$ be the set of all generalized vertices, that is, the smallest family containing all vertices and all ranges of edges, and is closed under finite intersections, finite unions and relative complements. If we let $\acf=\mathcal{G}^0$, then $\lspace$ is a normal labelled space. It is then straightforward to check that $L_R\lspace$ is isomorphic to $L_R(\mathcal G)$, via an isomorphism that takes generators to generators. Since every Leavitt path algebra of a graph can be seen as an ultragraph Leavitt path algebra, this example includes the previous one.
 
Leavitt path algebras associated with ultragraphs were originally defined in \cite{imanfar2017leavitt}, where the set $\mathcal{G}^0$ is not assumed to be closed under relative complements. In some works, for example \cite{reduction,goncalves_royer_2019, Nam}, $\mathcal{G}^0$ is assumed to be closed under relative complements. However, it is shown in \cite[Proposition~5.2]{GDD2} that the ultragraph Leavitt path algebras constructed with and without $\mathcal{G}^0$ being closed under relative complements are isomorphic. Hence, there is no harm in assuming that $\mathcal{G}^0$ is closed under relative complements.

\begin{example}
Label-finite left-resolving labelled graphs.
\end{example}

We have seen in the previous examples that ultragraph Leavitt algebras (and graph Leavitt path algebras) can be seen as Leavitt labelled path algebras. In this example we address the converse, that is, if a Leavitt labelled path algebra can be seen as a graph Leavitt path algebra. In fact, we provide a sufficient condition to obtain a positive answer to the aforementioned question. Our result is inspired by its C*-algebraic counterpart, \cite[Theorem~6.6]{BP1}, but also improves on it, as we are able to drop the row-finite assumption. However, in general, the converse is not true as is shown in Example~\ref{ex:commutative}.

\begin{definition}
    Let $\lgraph$ be a labelled graph. We say that $\lgraph$ is \emph{left-resolving} if for every $v\in\dgraph^0$, we have that $\lbf|_{r^{-1}(v)}$ is injective. We say that $\lgraph$ is \emph{label-finite} if $|\lbf^{-1}(a)|<\infty$ for every $a\in \alf$.
\end{definition}

\begin{proposition}(cf. \cite[Theorem~6.6]{BP1})\label{LLPALPA}. Let $\lgraph$ be a left-resolving, label-finite, labelled graph, and let $\acf$ be the family of all finite subsets of $\dgraph^0$. Then $L_R\lspace$ is isomorphic to $L_R(\dgraph)$.
\end{proposition}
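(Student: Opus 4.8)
The plan is to define an explicit $R$-algebra homomorphism $\phi\colon L_R\lspace\to L_R(\dgraph)$, check that it respects the defining relations of Definition~\ref{def:LPA} via the universal property, and then establish bijectivity: injectivity through the Graded Uniqueness Theorem (Corollary~\ref{GUT}) and surjectivity by a direct generator computation. One first checks that $\lspace$ is a normal labelled space: since $\lgraph$ is left-resolving, each labelled path is uniquely determined by its label and its range, which gives weak left-resolvingness, and since $\acf$ consists of finite sets while label-finiteness makes each $r(\alpha)$ finite, $\acf$ is an accommodating family closed under relative complements. Thus $L_R\lspace$ is defined, and I would set
\[
\phi(p_A)=\sum_{v\in A}v,\qquad \phi(s_a)=\sum_{e\in\lbf^{-1}(a)}e,\qquad \phi(s_a^*)=\sum_{e\in\lbf^{-1}(a)}e^*,
\]
each sum being finite precisely because $\lgraph$ is label-finite.

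Verifying relations (i) and (ii) is routine, using $vw=\delta_{vw}v$ and the equivalence $r(e)\in r(A,a)\Leftrightarrow s(e)\in A$ for $e\in\lbf^{-1}(a)$, which itself follows from left-resolvingness. I expect the real work, and the main obstacle, to lie in relations (iii), (iv) and (v), all of which reduce to controlling products of distinct equally-labelled edges. Two facts combine here: the relation $e^*f=\delta_{ef}\,r(e)$, valid in $L_R(\dgraph)$, kills the cross terms in $\phi(s_a^*)\phi(s_a)$; and left-resolvingness, which forces $r(e)\neq r(f)$ whenever $e\neq f$ share the label $a$, kills the cross terms $ef^*=e\,r(e)r(f)\,f^*=0$ arising in $\phi(s_a)\phi(s_a^*)$ and also guarantees that the ranges $\{r(e):e\in\lbf^{-1}(a)\}$ are distinct, so that $\phi(s_a^*)\phi(s_a)=\sum_{e\in\lbf^{-1}(a)}r(e)=\sum_{v\in r(a)}v=\phi(p_{r(a)})$. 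This yields (iii), and relation (iv) follows in the same manner.

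For relation (v) the computation $\phi(s_ap_{r(A,a)}s_a^*)=\sum_{e\in\lbf^{-1}(a),\,s(e)\in A}ee^*$ (again using left-resolvingness to pass from $r(e)\in r(A,a)$ to $s(e)\in A$ and to discard cross terms) gives, after summing over $a\in\lbf(A\dgraph^1)$, the element $\sum_{s(e)\in A}ee^*$. The subtle point is matching regularity: if $A\in\acf_{reg}$ then, for each $v\in A$, the set $\{v\}$ is regular, and label-finiteness forces $0<|s^{-1}(v)|<\infty$, i.e.\ $v$ is a regular vertex of $\dgraph$; summing the graph Cuntz–Krieger relations $v=\sum_{s(e)=v}ee^*$ over $v\in A$ then gives $\phi(p_A)=\sum_{v\in A}v=\sum_{s(e)\in A}ee^*=\phi\bigl(\sum_{a\in\lbf(A\dgraph^1)}s_ap_{r(A,a)}s_a^*\bigr)$, so $\phi$ is a well-defined homomorphism. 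It is $\Z$-graded, since $\phi(p_A),\phi(s_a),\phi(s_a^*)$ have degrees $0,1,-1$, and $\phi(rp_A)=r\sum_{v\in A}v\neq 0$ for $\emptyset\neq A\in\acf$ and $0\neq r\in R$, as distinct vertices are $R$-linearly independent in $L_R(\dgraph)$; Corollary~\ref{GUT} then forces injectivity. Finally, surjectivity holds because $\phi(p_{\{v\}})=v$ and, for $e\in\lbf^{-1}(a)$, left-resolvingness gives $\phi(s_ap_{\{r(e)\}})=e$ and $\phi(p_{\{r(e)\}}s_a^*)=e^*$, so the image contains all generators of $L_R(\dgraph)$. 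Hence $\phi$ is an isomorphism.
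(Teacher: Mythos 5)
Your proposal is correct and follows essentially the same route as the paper's proof: the same assignment $p_A\mapsto\sum_{v\in A}v$, $s_a\mapsto\sum_{e\in\lbf^{-1}(a)}e$, verification of the defining relations via the universal property (with the Cuntz--Krieger relation (v) as the key step, using left-resolvingness to convert $r(e)\in r(A,a)$ into $s(e)\in A$ and label-finiteness plus regularity of $A$ to invoke the graph relations at each vertex), injectivity from the Graded Uniqueness Theorem (Corollary~\ref{GUT}), and surjectivity by producing $v$ and $e$ in the image. You merely spell out details the paper leaves to the reader, such as the routine relations (i)--(iv), normality of the labelled space, and the nonvanishing hypothesis $\phi(rp_A)\neq 0$ needed to apply the uniqueness theorem.
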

\begin{proof}

Let $\{e,e^*,v\}_{e\in \dgraph^1, v\in \dgraph^0}$ denote the set of generators of the Leavitt path algebra $L_R(\dgraph)$ and, for each $A\in \acf$ and $a\in \alf $, define \[T_A = \sum_{v\in A}v,\quad Q_a= \sum_{\lbf(e)=a}e.\] Notice that the sum defining $Q_a$ is finite because the labelled graph is label-finite. To obtain a homomorphism from $L_R\lspace$ to $L_R(\dgraph)$ that maps $p_A$ to $T_A$ and $s_a$ to $Q_a$, we have to check that $\{T_A,Q_a\}_{A\in\acf,a\in\alf}$ satisfies the relations defining $L_R\lspace$ (see Definition~\ref{def:LPA}). We only check the Cuntz-Krieger condition (Condition (v) of Definition~\ref{def:LPA}) and leave the others to the reader.

Let $A\in \acf_{reg}$. Notice that $A$ contains no sinks (since it is regular), is finite, and contains no infinite emitters (since the graph is label-finite and $A$ is regular). We want to show that \begin{equation}\label{ck}T_A=\sum_{a\in\lbf(A\dgraph^1)}Q_a T_{r(A,a)}Q_a^*.
\end{equation}

Notice that, since the graph is left-resolving, \[Q_a T_{r(A,a)} = \sum_{\lbf(e)=a}e  \sum_{v\in r(A,a)} v = \sum_{\substack{\lbf(e)=a \\ r(e)\in r(A,a)}} e,\] so the right hand side of Equation~\eqref{ck} is equal to
\[\sum_{a\in\lbf(A\dgraph^1)} \sum_{\substack{\lbf(e)=a \\ r(e)\in r(A,a)}} e r(e) \sum_{\lbf(e')=a}r(e')(e')^* = \sum_{a\in\lbf(A\dgraph^1)} \sum_{\substack{\lbf(e)=a \\ r(e)\in r(A,a)}} e e^*. \]
To finish notice that, since the set $A$ is finite and regular, we can re-arrange the terms in the double sum on the right hand side of the equation above as 
\[ \sum_{v\in A} \sum_{s(e)=v} e e^* = \sum_{v\in A} v = T_A.\]

Now, from the universal property of $L_R\lspace$, we obtain an homomorphism $\phi: L_R\lspace \rightarrow L_R(\dgraph)$ such that $ \phi(p_A)= T_A$ and $\phi(s_a)= Q_A$. It is injective by the  Graded Uniqueness Theorem (Theorem~\ref{GUT}). To see that it is surjective, notice that $\phi(p_{\{v\}}) =v$ for every vertex $v$ and, for any edge $e$,  $e= Q_{\lbf(e)}T_{\{r(e)\}}= \phi(s_{\lbf(e)}p_{r(e)}) $. 
\end{proof}

\begin{remark} Observe that instead of using the graded uniqueness theorem in the proof above, we could instead define an inverse of $\phi$, which would be the homomorphism from $L_R(\dgraph)$ to $L_R\lspace$ such that $v\mapsto p_{\{v\}}$ and $e\mapsto s_{\lbf(e)}p_{r(e)}$.
\end{remark}

\subsection{Commutative algebras generated by idempotents and partial actions on them}

\begin{example}\label{ex:commutative}
Torsion-free commutative algebras generated by idempotents.
\end{example}
Let $A$ be a commutative $R$-algebra generated by its idempotents elements $E(A)$. Suppose also that $A$ is torsion-free in the sense that if $r$ is a non-zero element of $R$ and $e$ is a non-zero idempotent of $A$, then $re\neq 0$.\footnote{This is weaker than the condition given in \cite{Keimel}, but it is enough for the proof of \cite[Théorème~1]{Keimel}. His notion would imply the ring is an integral domain. On the other hand,  our definition is stronger than asking that $ra\neq 0$ for every $r$ that is not a zero-divisor and $a\neq 0$, which is sometimes used as the definition of a torsion-free module.} 
If $A\neq 0$, then this in particular implies that the only idempotents of $R$ are $0$ and $1$, because if $r\in R$ is idempotent then $1-r$ is an idempotent and $r(1-r)a=0$ for all $a\in A$. By \cite[Théorème~1]{Keimel}, $A$ is isomorphic to $\Lc(X,R)$ where $X$ is the Stone dual of the Boolean algebra of idempotents of $A$. More precisely, $X$ is the set of all ultrafilters in $E(A)$ with a basis for the topology given by the sets of the form $Z_e=\{\ft\in X\mid e\in\ft\}$ for $e\in E(S)$. The aforementioned isomorphism identifies $1_{Z_e}$ with $e$. We prove that $A$ is isomorphic to a Leavitt labelled path algebra.

Consider the graph where $\dgraph^0=X$ and $\dgraph^1=\emptyset$. Since there are no edges, there is no need for labels. More precisely, $\lbf$ is the empty function. We let $\acf$ be the compact-open subsets of $X$, which is an accommodating family closed under relative complements. Because there are no edges, we have that $\ftight\cong X$. Also, because there are no edges, we have that $L_R\lspace=D(L_R\lspace)=L_R\lspace_0$. Then, by Proposition~\ref{prop:diag}, we have that  $L_R\lspace\cong \Lc(X,R)\cong A$.

Now, we can give a family of examples of labelled spaces whose associated algebras are not isomorphic to any Leavitt path algebras of either graphs or ultragraphs. We can focus on the latter since it includes the former. It is shown in \cite[Corollary~6.14]{GDD2} that if a Leavitt path algebra of an ultragraph is commutative, then it is a direct sum of copies of $R$ or $R[x,x^{-1}]$. Notice that if $A$ is generated by idempotents, then its only possible $\zn$-grading is $A=A_0$, so $A$ cannot have $R[x,x^{-1}]$ as direct summand. Hence, if a Leavitt path algebra of an ultragraph is commutative and generated by idempotents, then it is necessarily a direct sum of copies of $R$, say $\bigoplus_{i\in I}R$. The space $X$ as above for this algebra must be the index set $I$ with the discrete topology. Because $X$ can be recovered from $\Lc(X,R)$ via the idempotents, if $X$ is a Stone space that is not discrete (for example, the one-point compactification of $\nn$), then $\Lc(X,R)$ is a Leavitt labelled path algebra that is not isomorphic to any Leavitt path algebra of an ultragraph.

\begin{example}
Partial actions on torsion-free commutative algebras generated by idempotents.
\end{example}

For this example, we retain the notation introduced in Example~\ref{ex:commutative}.

Let $A$ and $B$ be torsion-free commutative algebras generated by idempotents and assume that they are both unital. If $\theta:A\to B$ is a unital homomorphism then, when restricted to the idempotents, we get a Boolean algebra homomorphism $\theta:E(A)\to E(B)$. If $X$ and $Y$ are the Stone duals of $E(A)$ and $E(B)$ respectively, then there is a continuous function $\widehat{\theta}:Y\to X$ defined by $\widehat{\theta}(\ft)=\theta^{-1}(\ft)$. If $A$ and $B$ are not unital, then we need to impose an extra hypothesis on $\theta$ (see \cite{MR2974110}), however, this will not be needed in what follows.

Now, remove the hypothesis that $A$ is unital and let $I$ be a unital ideal of $A$. Then the unit $e$ of $I$ must be an idempotent of $A$ and via the isomorphism of $A$ and $\Lc(X,R)$ as explained in Example~\ref{ex:commutative}, it can be identified with $1_{Z_e}$. Moreover, we have that $I\cong\Lc(Z_e,R)$.

Suppose now that $\tau=(\{D_t\}_{t\in\F},\{\tau_t\}_{t\in\F})$ is a partial action of a free group $\F$ on $A$ such that $D_t$ is a unital ideal for every $t\in\F\setminus\{\eword\}$. For each $t\in\F\setminus\{\eword\}$, let $e_t$ be the identity of $D_t$ and $U_t=Z_{e_t}$, which is a compact-open subset of $X$. As explained above, for each $t\in \F\setminus\{\eword\}$, we can define a continuous function $\rho_t:U_{t^{-1}}\to U_t$ as $\rho_t=\widehat{\tau_{t^{-1}}}$. We also define $\rho_{\eword}=Id_X$. Analogous to what is done in \cite[Chapter~11]{MR3699795} (see also \cite{MR3506079,DE1}), we see that  $\rho=(\{U_t\}_{t\in\F},\{\rho_t\}_{t\in\F})$ is a topological partial action of $\F$ on $X$. Conversely, given such $\rho$, we can find a partial action $\tau$ such that these two constructions are inverses of each other. We say that $\rho$ and $\tau$ are \emph{dual actions} (notice the same arguments above, leading to the definition of dual actions, carry through if the free group is replaced by a discrete group).

Dual actions share interesting properties. More precisely, we have that $\tau$ is semi-saturated, in the sense that $\tau_{st}=\tau_s\circ \tau_t$ for all $s,t\in\ft$ such that $|st|=|s|+|t|$, if and only if $\rho$ is semi-saturated. Also, $\tau$ is orthogonal, in the sense that $D_a\cap D_b=\{0\}$ for generators $a,b\in\F$, if and only if $\rho$ is orthogonal in the sense that $U_a\cap U_b=\emptyset$ for all generators $a,b\in\F$. 

Our next goal is to prove that for a semi-saturated and orthogonal partial action $\tau$ of a free group $\F$ acting on $A$ as above, the skew group ring $A\rtimes_{\tau}\F$ is isomorphic to a Leavitt labelled path algebra. For that, we first work in a topological partial action setting. We highlight that the partial action $\varphi$ recalled in Section~\ref{section:partial.skew.group} is orthogonal and semi-saturated, \cite[Proposition~3.12]{GillesDanie}.

\begin{theorem}\label{thm:partial.action}
Let $X$ be a Stone space, that is, a dual of a Boolean algebra, and let $\rho=(\{U_t\}_{t\in\F},\{\rho_t\}_{t\in\F})$ be a semi-saturated, orthogonal topological partial action of a free group $\F$ on $X$ such that $U_t$ is compact-open for all $t\in\F\setminus\{\eword\}$. Then there exists a labelled space $\lspace$ and a homeomorphism $f:X\to\ftight$, where $\ftight$ is the tight spectrum of the inverse semigroup associated with $\lspace$, such that $f$ is equivariant with respect to the actions $\rho$ and $\varphi$ given in Section \ref{section:partial.skew.group}. In particular $\F\ltimes_\rho X$ and $\F\ltimes_\varphi\ftight$ are isomorphic as topological groupoids.
\end{theorem}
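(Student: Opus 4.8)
The plan is to realise $\rho$ as the canonical action $\varphi$ attached to a concretely built labelled space. I would take the alphabet $\alf$ to be the free generating set of $\F$ (discarding any generator $a$ with $U_a=\emptyset$, which contributes nothing to either groupoid), and set $\dgraph^0=X$. For each generator $a$ and each $x\in U_a$ I would put a single edge labelled $a$ from $x$ to $\rho_{a^{-1}}(x)$; orthogonality makes the sets $U_a$ pairwise disjoint, and since each $\rho_a$ is a bijection the resulting labelled graph $\lgraph$ is left-resolving. I would let $\acf$ be the Boolean algebra of compact-open subsets of $X$. Using that $\rho$ is semi-saturated one checks that $r(A,\alpha)=\rho_{\alpha^{-1}}(A\cap U_\alpha)$ and $r(\alpha)=U_{\alpha^{-1}}$ for $\alpha\in\awplus$; injectivity of $\rho_{\alpha^{-1}}$ yields weak left-resolvingness, and since $U_\alpha$ is compact-open and $\rho_{\alpha^{-1}}$ a homeomorphism, $\acf$ is closed under relative ranges. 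As the compact-open sets of $X$ form a Boolean algebra, $\acf$ is closed under the Boolean operations, so $\lspace$ is a normal labelled space.

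\textbf{Definition of $f$ and tightness.} Next I would define $f\colon X\to\ftight$ by reading off the forward itinerary of the partial shift. For $x\in X$ set $x^{(0)}=x$; if $x^{(n)}$ lies in the (at most one, by orthogonality) set $U_a$, record $\alpha_{n+1}=a$ and set $x^{(n+1)}=\rho_{a^{-1}}(x^{(n)})$, stopping when $x^{(n)}$ lies in no $U_a$. This produces a word $\alpha(x)\in\overline{\awleinf}$ together with the family $\ftg{F}_n=\{A\in\acfrg{\alpha_{1,n}}\mid x^{(n)}\in A\}$; using the formula $r(A,\alpha_{n+1})=\rho_{\alpha_{n+1}^{-1}}(A\cap U_{\alpha_{n+1}})$ one sees this is a complete family, so by Theorem~\ref{thm.filters.in.E(S)} it defines a filter $f(x)$. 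Each $\ftg{F}_n$ is the point-ultrafilter of compact-open sets at $x^{(n)}$, hence an ultrafilter. When $\alpha(x)$ is infinite this makes $f(x)$ tight by Theorem~\ref{thm:TightFiltersType}(i); when it is finite of length $n$, the terminal point $x^{(n)}$ lies in no $U_a$, i.e.\ is a sink of $\dgraph$, and for each $A\in\ftg{F}_n$ either $\lbf(A\dgraph^1)$ is infinite (giving (a)) or, removing the finitely many $U_a$ that meet $A$, the set obtained is a nonempty compact-open subset of $A\cap\dgraph^0_{sink}$ (giving (b)); thus $f(x)$ is tight by Theorem~\ref{thm:TightFiltersType}(ii).

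\textbf{Bijectivity and homeomorphism.} For bijectivity I would invert $f$ through Stone duality. Given a tight filter $\xi^\alpha$ with complete family $\{\xi_n\}$, completeness together with the formula for $r(-,\alpha_{n+1})$ forces each nonempty $\xi_n$ to be the point-ultrafilter at a point $y^{(n)}$ with $\rho_{\alpha_{n+1}}(y^{(n+1)})=y^{(n)}$; in particular $\xi_0$ is nonempty and determines a point $x=y^{(0)}\in X$. The decisive point is that a finite-type tight filter must terminate at a sink: if $y^{(n)}\in U_b$, then a small compact-open $A\scj U_b$ containing $y^{(n)}$ would violate both (a) and (b) of Theorem~\ref{thm:TightFiltersType}(ii) by orthogonality, a contradiction; hence the itinerary of $x$ is exactly $\alpha$ and $f(x)=\xi$. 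Injectivity is immediate since $\ftg{F}_0$ already recovers $x$. For the topology, a direct computation using Remark~\ref{remark.when.in.xialpha} gives $f^{-1}(V_{(\alpha,A,\alpha)})=\rho_\alpha(A)$ (for $A\scj r(\alpha)$), which is compact-open; finite Boolean combinations then show that preimages of the basic sets $V_{e:e_1,\dots,e_n}$ are open, so $f$ is continuous, while $f(A)=V_{(\eword,A,\eword)}$ shows that $f$ carries the basis of compact-open sets of $X$ to open sets, so $f$ is open. Hence $f$ is a homeomorphism.

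\textbf{Equivariance and the main obstacle.} Finally I would verify equivariance on generators: the computation above gives $f(U_a)=V_a$ and $f(U_{a^{-1}})=V_{a^{-1}}$, and comparing the itinerary description of $f(\rho_a(x))$ with the explicit description of $\varphi_a$ in \eqref{vaccinatedaligator} shows $f\circ\rho_a=\varphi_a\circ f$ on $U_{a^{-1}}$. Since both $\rho$ and $\varphi$ are semi-saturated (the latter by \cite[Proposition~3.12]{GillesDanie}), equivariance propagates from generators to all of $\F$, whence $(t,x)\mapsto(t,f(x))$ is an isomorphism of topological groupoids $\F\ltimes_\rho X\to\F\ltimes_\varphi\ftight$. \textbf{The main obstacle} is the precise matching between the dynamics and the tight spectrum: showing that $f$ lands in, and surjects onto, the tight filters (rather than all filters of $E(S)$) requires translating the combinatorial conditions of Theorem~\ref{thm:TightFiltersType}(ii) into the dynamical statement that the forward orbit either never stops or stops at a point that is a sink on an entire compact-open neighbourhood, and it is exactly here that orthogonality and the totally disconnected (Stone) structure of $X$ are indispensable.
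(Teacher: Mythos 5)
Your proposal is correct and follows essentially the same route as the paper's proof: the identical labelled space (alphabet the generators of $\F$, $\dgraph^0=X$, one edge labelled $a$ from $x$ to $\rho_{a^{-1}}(x)$ for $x\in U_a$, $\acf$ the compact-open sets), the same itinerary map $f$ built from point-ultrafilter complete families, the same sink argument for tightness and for forcing finite-type tight filters to terminate at sinks, Stone duality for surjectivity, the basis argument for the homeomorphism, and semi-saturation to reduce equivariance to generators. The only cosmetic deviations are that you run the surjectivity recursion downward from $\xi_{|\alpha|}$ rather than upward from $\xi_0$, and that you explicitly discard generators with $U_a=\emptyset$ (a small point, needed for the labelling map to be surjective, which the paper glosses over); neither changes the substance.
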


\begin{proof}
We start by constructing a labelled space. Let $\alf$ be the set of generators of $\F$, $\dgraph^0=X$, $\dgraph^1=\{(a,x)\in\alf\times X\mid x\in U_a\}$, $s(a,x)=x$, $r(a,x)=\rho_{a^{-1}}(x)$, $\lbf(a,x)=a$, and $\acf$ the set of compact-open subsets of $X$. Then $\lgraph$ is a labelled graph. Notice that, because each $\rho_{a^{-1}}$ is bijective, $r(a,x)=r(a,y)$ implies that $x=y$, so that the labelled graph $\lgraph$ is left-resolving. Because the action is semi-saturated, we see that, in general, for $\alpha\in\awplus$, we have that $r(\alpha)=U_{\alpha^{-1}}$ so that $\acfra=\{A\in\acf\mid A\scj U_{\alpha^{-1}}\}$. Moreover, for $A\in\acf$, we have that $r(A,\alpha)=\rho_{\alpha^{-1}}(A\cap U_{\alpha})$, which implies that $\acf$ is an accommodating family for $\lgraph$ such that $\lspace$ is a normal labelled space.

Notice that, because the partial action $\rho$ is orthogonal, we have a partition of $X$ given by $X=\dgraph^0_{sink}\sqcup\left(\bigsqcup_{a\in\alf}U_a\right)$. We associate an element of $\awleinf$ with a given $x\in X$ as follows: there is either a unique letter $a_1\in \alf$ such that $x\in U_{a_1}$, or $x\in \dgraph^0_{sink}$. In the first case, the same dichotomy applies to $\rho_{a_1^{-1}}(x)$, and so either there is a unique letter $a_2\in\alf$ such that $\rho_{a_1^{-1}}(x)\in U_{a_2}$ or $\rho_{a_1^{-1}}(x)\in \dgraph^0_{sink}$. Notice that because the partial action is semi-saturated, $\rho_{a_2^{-1}}(\rho_{a_1^{-1}}(x))=\rho_{(a_1a_2)^{-1}}(x)$ and, in particular, $x\in U_{a_1a_2}$. This process might either stop and we get a finite word $a_1\cdots a_n$ (possibly the empty word), or it does not stop and we get an infinite word $a_1a_2\cdots$. This word is exactly the label of the path given by the finite or infinite sequence of edges $(a_1,x),(a_2,\rho_{a_1^{-1}}(x)),\ldots,(a_n,\rho_{(a_1\cdots a_{n-1})^{-1}}(x))(\ldots)$ and thus it belongs to $\awleinf$. Denote by $\alpha_x$ this word and observe that $x\in U_{a_{1,i}}$ for all $1\leq i\leq |\alpha_x|$ (as in the definition of a complete family in Section~\ref{subsection:filters.E(S)},  understanding that $i\leq |\alpha_x|$ means $i<\infty$ if $|\alpha_x|=\infty$).

Next we want to associate a complete family with $x$ and $\alpha_x$. For each $0\leq i\leq |\alpha_x|$, we then let $\ftg{F}_i=\{A\in\acfrg{a_{1,i}}\mid \rho_{a_{1,i}^{-1}}(x)\in A\}$, which is an ultrafilter in $\acfrg{a_{1,i}}$ because it is a principal filter of a singleton. Suppose that $0\leq i<i+1\leq |\alpha_x|$, then 
\begin{align*}
\{A\in \acfrg{a_{1,i}} \mid r(A,a_{i+1})\in\ftg{F}_{i+1}\} & =\{A\in \acfrg{a_{1,i}} \mid \rho_{a_{1,i+1}}^{-1}(x)\in\rho_{a_{i+1}}^{-1}(A\cap U_{a_{i+1}})\} \\
&=\{A\in \acfrg{a_{1,i}} \mid \rho_{a_{1,i}}^{-1}(x)\in(A\cap U_{a_{i}})\}\\
&=\ftg{F}_i.
\end{align*}
Hence $\{\ftg{F}_i\}_{i=0}^{|\alpha_x|}$ is a complete family of ultrafilters for $\alpha_x$, that essentially corresponds to the sequence $\{x,\rho_{a_1^{-1}}(x),\ldots,\rho_{a_{1,n-1}^{-1}}(x)(\ldots)\}$.

Let $\xi_x$ be the filter corresponding to the pair $(\alpha_x,\{\ftg{F}_i\}_{i=0}^{|\alpha_x|})$ as in Theorem~\ref{thm.filters.in.E(S)} and let us prove that $\xi_x$ is tight. If $|\alpha_x|=\infty$, by Theorem~\ref{thm:TightFiltersType}, $\xi_x$ is a tight filter, because $\ftg{F}_i$ is an ultrafilter for all $i$. Suppose now that $n=|\alpha_x|<\infty$, so that $\rho_{\alpha_x^{-1}}(x)\in\dgraph^0_{sink}$. Let $A\in\ftg{F}_n$ and suppose that $|\lbf(A\dgraph^1)|<\infty$ and let us show that there exists $\emptyset\neq B\scj A\cap \dgraph^0_{sink}$. Notice that, because $|\lbf(A\dgraph^1)|<\infty$, $A$ intersects $U_a$ for a finite amount of letters so that $B=A\setminus \bigcup_{a\in\alf}U_a\scj \dgraph^0_{sink}$ is again a compact-open subset of $X$. Moreover, $\rho_{\alpha_x^{-1}}(x)\in B$, so that $B\neq\emptyset$ and is the desired set. By (ii) of Theorem~\ref{thm:TightFiltersType}, we have that $\xi_x$ is tight.

We then have a map $f:X\to\ftight$ given by $f(x)=\xi_x$, which is injective, because $\ftg{F}_0$ is completely determined by $x$. Next, we prove that $f$ is actually a homeomorphism that is equivariant with respect to the partial action $\rho$ on $X$ and $\varphi$ on $\ftight$.

For the surjectivity of $f$, let $\xia\in\ftight$. We cannot have $|\alpha|\geq 1$ and $\xi_0=\emptyset$, because if $A\in\xi_1$, then $r(\rho_{\alpha_1}(A),\alpha_1)=A$ so that $\rho_{\alpha_1}(A)\in\xi_0$. Let $x_{\xi}\in X$ be the element corresponding to $\xi_0$ via Stone duality, and let us prove that $f(x_{\xi})=\xi$. Suppose first that $\alpha=\eword$. If $x_\xi\notin \dgraph^{0}_{sink}$, then there exists $a\in\alf$ such that $x_\xi\in U_a$, but this would imply that $U_a\in\xi_0$. However, $\lbf(U_a\dgraph^1)=\{a\}$ is finite and $U_a\cap \dgraph^0_{sink}=\emptyset$, contradicting the fact that $\xi$ is tight. Hence, in this case, $\alpha_{x_\xi}=\eword$ and $f(x_\xi)=\xi$. Suppose now that $1\leq |\alpha|$ and let us prove that, for $1\leq i\leq|\alpha|$,  $x_\xi\in U_{\alpha_{1,i}}$ and $\xi_i$ is the ultrafilter in $\acfrg{\alpha_{1,i}}$ corresponding to the point $\rho_{\alpha_{1,i}^{-1}}(x_\xi)$. For $i=1$, we must have $x_\xi\in U_{\alpha_{1}}$ because otherwise we would find $A\in\xi_0$ such that $r(A,\alpha_1)=\emptyset$, which does not exist. Supposing that $\xi_1$ corresponds to a point $y\in X$ different from $\rho_{\alpha_1^{-1}}(x_\xi)$, we would have an element $B\in\xi_1$ such that $y\in B$ but $\rho_{\alpha_1^{-1}}(x_\xi)\notin B$. Then $A=\rho_{\alpha_1}(B)\in\acf$ is such that $r(A,\alpha_1)=B$ so that $A\in\xi_0$, but $x_\xi\notin A$, which is a contradiction. Using induction and repeating the same argument we conclude that $\xi_i$ is the ultrafilter in $\acfrg{\alpha_{1,i}}$ corresponding to the point $\rho_{\alpha_{1,i}^{-1}}(x_\xi)$. Now, if $|\alpha|$ is finite, as for the case of the empty word, we have that $\varphi_{\alpha^{-1}}(x)\in\dgraph^0_{sink}$ and hence $\alpha_x=\alpha$ and $f(x_\xi)=\xi$. On the other hand, if $|\alpha|=\infty$, then $\varphi_{\alpha{1,i}^{-1}}(x)\notin\dgraph^0_{sink}$ for all $i$ and again we have $\alpha=\alpha_x$ and $f(x_\xi)=\xi$. Hence $f$ is surjective and therefore a bijection.

By the above construction, we see that if $A\in\acf$ is such that $A\scj U_a$ for some $a\in\alf$, then $f(A)=V_{(a,\rho_{a^{-1}}(A),a)}$, or more generally, if $A\in\acf$ and $\alpha\in\awplus$ are such that $A\scj U_{\alpha}$, then $f(A)=V_{(\alpha,\rho_{\alpha^{-1}}(A),\alpha)}$.  In general, for $A\in\acf$, we have that $f(A)=V_{(\eword,A,\eword)}$. On the other hand, if $(\alpha,A,\alpha)\in E(S)$, then $A\scj U_{\alpha^{-1}}$ and $A'=\rho_{\alpha}(A)$ is such that $f(A')=V_{(\alpha,A,\alpha)}$. This implies that $f$ sends a basis of $X$ to a basis of $\ftight$ and therefore $f$ is a homeomorphism.

To show that $f$ is equivariant, since $\rho$ and $\varphi$ are semi-saturated, it enough to prove that $f(\rho_{a^{-1}}(x))=\varphi_{a^{-1}}(f(x))$ for all $a\in\alf$ and all $x\in U_a$. For that, we just have to check that $\varphi_{a^{-1}}(f(x))_0$ is the ultrafilter in $\acf$ corresponding to $\rho_{a^{-1}}(x)$. We already know that $f(x)_1$ is the ultrafilter in $\acfrg{a}$ corresponding to $\rho_{a^{-1}}(x)$. Now
\[\varphi_{a^{-1}}(f(x))_0=\usetr{f(x)_1}{\acf}=\{A\in\acf\mid \exists\, B\scj U_{a^{-1}}\text{ s.t. }\rho_{a^{-1}}(x)\in B\text{ and }B\scj A\},\]
and if $C\in\acf$ is such that $\rho_{a^{-1}}(x)\in C$ then, taking $B=C\cap U_{a^{-1}}$, we see that $C\in \varphi_{a^{-1}}(f(x))_0$ and hence $\varphi_{a^{-1}}(f(x))_0$ is the ultrafilter corresponding to $\rho_{a^-1}(x)$.

Since $f$ is equivariant, it is straightforward to check that the map $F:\F\ltimes_\rho X\to \F\ltimes_\varphi\ftight$ given by $F(t,x)=(t,f(x))$ is an isomorphism of topological groupoids.
\end{proof}

\begin{corollary}\label{jacarevoador}
    Let $R$ be a commutative unital ring and $A$ be a torsion-free commutative $R$-algebra generated by its idempotents elements. Let also $\tau=(\{D_t\}_{t\in\F},\{\tau_t\}_{t\in\F})$ be a semi-saturated, orthogonal algebraic partial action of a free group $\F$ on $A$ such that $D_t$ is unital for every $t\in\F\setminus\{\eword\}$. Then, there exists a normal labelled space $\lspace$ such that $A\rtimes_{\tau}\F\cong L_R\lspace$.
\end{corollary}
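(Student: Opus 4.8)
The plan is to transport everything to the topological/groupoid side via Stone duality, apply Theorem~\ref{thm:partial.action}, and then chain the realization theorems already established. Concretely, by Example~\ref{ex:commutative} we may identify $A$ with $\Lc(X,R)$, where $X$ is the dual of the Boolean algebra $E(A)$; since $A$ is torsion-free this identification carries each idempotent $e\in E(A)$ to $1_{Z_e}$. The partial action $\tau$ then dualizes to a topological partial action $\rho=(\{U_t\}_{t\in\F},\{\rho_t\}_{t\in\F})$ of $\F$ on $X$, exactly as recalled in the discussion preceding Theorem~\ref{thm:partial.action}: for $t\neq\eword$ the unit $e_t$ of the unital ideal $D_t$ gives the compact-open set $U_t=Z_{e_t}$, and $\rho_t=\widehat{\tau_{t^{-1}}}$.

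First I would record that $\rho$ satisfies all the hypotheses of Theorem~\ref{thm:partial.action}: $X$ is a Stone space (dual of a Boolean algebra), each $U_t$ for $t\neq\eword$ is compact-open, and, by the duality between $\tau$ and $\rho$, the partial action $\rho$ is semi-saturated and orthogonal precisely because $\tau$ is. Applying Theorem~\ref{thm:partial.action} then yields a normal labelled space $\lspace$ together with an equivariant homeomorphism $f\colon X\to\ftight$ intertwining $\rho$ with the partial action $\varphi$ of Section~\ref{section:partial.skew.group}, and hence an isomorphism of topological groupoids $F\colon\F\ltimes_\rho X\to\F\ltimes_\varphi\ftight$.

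The remaining task is to identify $A\rtimes_\tau\F$ with $L_R\lspace$ through this data, which I would do in three isomorphisms. \emph{(a)} Under $A\cong\Lc(X,R)$, the algebraic partial action $\tau$ is intertwined with the dual algebraic partial action $\widehat{\rho}$ of $\rho$ on $\Lc(X,R)$ (this is the content of the two constructions being mutually inverse), so functoriality of the partial skew group ring gives $A\rtimes_\tau\F\cong\Lc(X,R)\rtimes_{\widehat{\rho}}\F$. \emph{(b)} Since $X$ is totally disconnected, \cite[Theorem~3.2]{MR3743184} gives $\Lc(X,R)\rtimes_{\widehat{\rho}}\F\cong A_R(\F\ltimes_\rho X)$, and the groupoid isomorphism $F$ induces $A_R(\F\ltimes_\rho X)\cong A_R(\F\ltimes_\varphi\ftight)$. \emph{(c)} Finally, Theorem~\ref{thm:LeavittSteinbergIsom} gives $A_R(\F\ltimes_\varphi\ftight)\cong L_R\lspace$. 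Composing these yields $A\rtimes_\tau\F\cong L_R\lspace$. (Equivalently, one may stay on the skew-ring side and use $f$ to obtain $\Lc(X,R)\rtimes_{\widehat{\rho}}\F\cong\Lc(\ftight,R)\rtimes_{\hat\varphi}\F$ directly, then invoke Theorem~\ref{thm:SkewRingLeavittIsom}.)

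The main obstacle is step \emph{(a)}: making precise that the Stone-duality isomorphism $A\cong\Lc(X,R)$ carries the algebraic partial action $\tau$ to the dual action $\widehat{\rho}$, so that the two partial skew group rings genuinely coincide rather than merely being built from isomorphic dynamical data. This requires checking, for each $t$, that the algebra isomorphism $\tau_t\colon D_{t^{-1}}\to D_t$ corresponds under $D_t\cong\Lc(U_t,R)$ to $\widehat{\rho}_t(f)=f\circ\rho_{t^{-1}}$, i.e.\ that dualizing $\tau_t$ on idempotents returns exactly $\rho_t$. This is where the hypothesis that each $D_t$ is \emph{unital} enters, since it guarantees $D_t\cong\Lc(U_t,R)$ with $U_t$ compact-open and makes the Stone dual of $\tau_t$ a homeomorphism of compact-open sets. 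Once this intertwining is in place, the isomorphism of partial skew group rings is the standard functoriality statement, and the rest is bookkeeping in composing the already-established isomorphisms.
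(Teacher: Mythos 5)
Your proposal is correct and follows essentially the same route as the paper: Stone duality to identify $A$ with $\Lc(X,R)$ and $\tau$ with the dual of a topological partial action $\rho$ (with compact-open $U_t$ coming from unitality of the $D_t$), then Theorem~\ref{thm:partial.action} for the groupoid isomorphism $\F\ltimes_\rho X\cong\F\ltimes_\varphi\ftight$, and finally \cite[Theorem~3.2]{MR3743184} together with Theorem~\ref{thm:LeavittSteinbergIsom} to conclude $A\rtimes_\tau\F\cong L_R\lspace$. The only difference is presentational: what you isolate as step \emph{(a)} the paper absorbs into a ``without loss of generality'' reduction, justified by the dual-actions discussion preceding Theorem~\ref{thm:partial.action}, where the two constructions ($\tau\mapsto\rho$ and $\rho\mapsto\tau$) are noted to be mutually inverse.
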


\begin{proof}
As explained before Theorem~\ref{thm:partial.action}, we may assume without loss of generality that $A=\Lc(X,R)$, where $X$ is some Stone space and $\tau$ is the dual of a partial action $\rho=(\{U_t\}_{t\in\F},\{\rho_t\}_{t\in\F})$ on $X$, which is also semi-saturated and orthogonal. The hypothesis that $D_t$ is unital for every $t\in\F\setminus\{\eword\}$ implies that $U_t$ is compact-open for these $t$'s.

By Theorem~\ref{thm:partial.action}, $\F\ltimes_\rho X$ and $\F\ltimes_\varphi\ftight$ are isomorphic as topological groupoids. So, by \cite[Theorem 3.2]{MR3743184} and Theorem~\ref{thm:LeavittSteinbergIsom} we then conclude that $\Lc(X,R)\rtimes_{\tau}\F\cong L_R\lspace$ as $R$-algebras.
\end{proof}

\section{The general uniqueness theorem}\label{section:general.uniqueness.theorems}

Let $R$ be a commutative unital ring and $\lspace$ a normal labelled space. In this section, we define a subalgebra of $L_R\lspace$ which we call the core subalgebra. We prove that it is maximal abelian and also that an $R$-algebra homomorphism $\phi$ of $L_R\lspace$ is injective if and only if $\phi$ is injective on the core subalgebra. To prove these results, we use the partial action defined in Section~\ref{section:partial.skew.group} and we describe the isotropy bundle of the associated groupoid.

\begin{definition}\cite[Definition 9.5]{COP}
	Let $\lspace$ be a normal labelled space.
	\begin{enumerate}
		\item A pair $(\alpha,A)$ with $\alpha\in\awplus$ and $A\in\acfra$ is a \emph{cycle} if for every $B\in\acfra$ with $B\scj A$, we have that $r(B,\alpha)=B$.
		\item A cycle $(\alpha, A)$ has an \textit{exit} if there exist $0\leq k \leq |\alpha|$ and $\emptyset\neq B\in \acf$ such that $B\scj r(A,\alpha_{1,k})$ and $\lbf(B\dgraph^1)\neq \{\alpha_{k+1}\}$ (where $\alpha_{|\alpha|+1}=\alpha_1$).
		\item The labelled space $\lspace$ satisfies condition ($L_\acf$) if every cycle has an exit.
	\end{enumerate}
\end{definition}

To fix notation, $\text{Iso}(G)^0$ represents the interior of the isotropy bundle of a groupoid $G$. Our next goal is to describe $\text{Iso}(\F\ltimes_\varphi\ftight)^0$. This set will play a crucial role in a generalized uniqueness theorem for $L_R\lspace$ as well as our simplicity characterization of $L_R\lspace$.

\begin{remark}\label{rmk:iso}
From \cite[Remark~6.11]{GillesDanie}, in order for a $(t,\xi)\in \F\ltimes_\varphi\ftight$ with $t\neq\eword$ to be a element of the isotropy bundle (not necessarily in its interior) it is necessary that there exist $\beta\in\awstar$, $\gamma\in\awplus$ such that the associated labelled path of $\xi$ is $\beta\gamma^{\infty}$ and $t$ is either $\beta\gamma\beta^{-1}$ or $\beta\gamma^{-1}\beta^{-1}$.
\end{remark}
In the next results we relate $\text{Iso}(\F\ltimes_\varphi\ftight)^0$ to cycles without exits.

\begin{lemma}\label{l:isocycle}
Suppose that $(\gamma,C)$ is a cycle without exits. Then, for every $\xi\in V_{(\gamma,C,\gamma)}$, we have that $\xi\in V_{\gamma}\cap V_{\gamma^{-1}}$ and $\varphi_{\gamma}(\xi)=\xi=\varphi_{\gamma^{-1}}(\xi)$.
\end{lemma}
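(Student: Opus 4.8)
The plan is to prove the two membership statements $\xi\in V_\gamma$ and $\xi\in V_{\gamma^{-1}}$ first, then to identify the labelled path associated with $\xi$ as $\gamma^\infty$, and finally to use the explicit formulas for $\varphi$ to check that $\varphi_{\gamma^{-1}}$ (and hence $\varphi_\gamma$) fixes $\xi$.

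First I would unpack $\xi\in V_{(\gamma,C,\gamma)}$. By Remark~\ref{remark.when.in.xialpha} this says that $\gamma$ is a beginning of the labelled path $\alpha$ associated with $\xi$ and that $C\in\xi_{|\gamma|}$. Writing $C_k=r(C,\gamma_{1,k})$ for $0\le k\le|\gamma|$, a downward induction using the completeness relation $\xi_k=\{A\in\acfrg{\gamma_{1,k}}\mid r(A,\gamma_{k+1})\in\xi_{k+1}\}$, the range composition $r(C_k,\gamma_{k+1})=C_{k+1}$, and the base case $C_{|\gamma|}=r(C,\gamma)=C\in\xi_{|\gamma|}$ (where the cycle property $r(C,\gamma)=C$ is used) shows $C_k\in\xi_k$ for all $k$; in particular $C=C_0\in\xi_0$ and $C=C_{|\gamma|}\in\xi_{|\gamma|}$. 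Since $C\scj r(\gamma)$ and the $\xi_k$ are filters, upward closure gives $r(\gamma)\in\xi_0$ and $r(\gamma)\in\xi_{|\gamma|}$, that is $(\eword,r(\gamma),\eword)\in\xi$ and $(\gamma,r(\gamma),\gamma)\in\xi$. By the description of $V_\gamma$ and $V_{\gamma^{-1}}$ recalled in Section~\ref{section:partial.skew.group}, this is exactly $\xi\in V_\gamma\cap V_{\gamma^{-1}}$.

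Next I would show $\alpha=\gamma^\infty$, extending $C_k\in\xi_k$ to all $k\ge0$ via the periodic convention $C_k=r(C,\gamma_{1,\,k\bmod|\gamma|})$. Assuming $\alpha_{1,n}$ agrees with $\gamma^\infty$ and $C_n\in\xi_n$, completeness forces $r(C_n,\alpha_{n+1})\in\xi_{n+1}$, so $\alpha_{n+1}\in\lbf(C_n\dgraph^1)$; since $(\gamma,C)$ has no exit and $C_n=r(C,\gamma_{1,k})$ with $k=n\bmod|\gamma|$, one has $\lbf(C_n\dgraph^1)=\{\gamma_{k+1}\}$, whence $\alpha_{n+1}$ is the next letter of $\gamma^\infty$ and $C_{n+1}\in\xi_{n+1}$. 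The path cannot terminate at a finite $n$: if it did, $\xi$ would be of finite type and, by Theorem~\ref{thm:TightFiltersType}(ii), the element $C_n\in\xi_{|\alpha|}$ would have to satisfy that $\lbf(C_n\dgraph^1)$ is infinite or that some nonempty set of $\acf$ is contained in $C_n\cap\dgraph^0_{sink}$, both of which are ruled out by the no-exit hypothesis. Hence $\alpha=\gamma^\infty$, $\xi$ is of infinite type, and every $\xi_n$ is an ultrafilter by Theorem~\ref{thm:TightFiltersType}(i).

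Finally I would verify $\varphi_{\gamma^{-1}}(\xi)=\xi$ and then deduce $\varphi_\gamma(\xi)=\xi$. Since $\xi\in V_\gamma$ has path $\gamma^\infty$, Equation~\eqref{eq:remove.beginning} gives $\varphi_{\gamma^{-1}}(\xi)_n=\usetr{\xi_{|\gamma|+n}}{\acfrg{\gamma^\infty_{1,n}}}$. Both $\xi$ and $\varphi_{\gamma^{-1}}(\xi)$ have path $\gamma^\infty$, so by completeness each level is determined by the next one, and it suffices to match them cofinally, say at all levels $n=|\gamma|m$, where $\acfrg{\gamma^\infty_{1,|\gamma|m}}=\acfrg{\gamma^m}$. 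Iterating completeness over one full period gives the equivalence $(\star)$: for $D\in\acf$ with $D\scj C$, $D\in\xi_{|\gamma|m}\iff r(D,\gamma)\in\xi_{|\gamma|(m+1)}\iff D\in\xi_{|\gamma|(m+1)}$, the last step using $r(D,\gamma)=D$ from the cycle. Using $(\star)$ and $C\in\xi_{|\gamma|m}$ for all $m$, both inclusions of $\usetr{\xi_{|\gamma|(m+1)}}{\acfrg{\gamma^m}}=\xi_{|\gamma|m}$ follow by intersecting a given set with $C$ and invoking upward closure of the relevant filter. Propagating down the levels yields $\varphi_{\gamma^{-1}}(\xi)=\xi$, and applying the homeomorphism $\varphi_\gamma$ (defined on $\xi\in V_{\gamma^{-1}}$, with $\varphi_\gamma\circ\varphi_{\gamma^{-1}}=\mathrm{id}_{V_\gamma}$) gives $\varphi_\gamma(\xi)=\varphi_\gamma(\varphi_{\gamma^{-1}}(\xi))=\xi$, completing the proof. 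The \emph{main obstacle} is this last part together with the identification $\alpha=\gamma^\infty$: the no-exit hypothesis and the tight-filter classification of Theorem~\ref{thm:TightFiltersType} are genuinely needed to pin down the path, and matching the complete families through the upper-set formula \eqref{eq:remove.beginning} is delicate; the key simplification is to compare only at levels that are multiples of $|\gamma|$, where the relevant segment of $\gamma^\infty$ is $\gamma$ itself, so that only the original cycle identity $r(C,\gamma)=C$, rather than any rotation of it, is required.
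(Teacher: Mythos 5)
Your proof is correct and follows essentially the same route as the paper's: identify the word of $\xi$ as $\gamma^{\infty}$ via the no-exit hypothesis and Theorem~\ref{thm:TightFiltersType}, compare the complete families of $\xi$ and $\varphi_{\gamma^{-1}}(\xi)$ only at levels that are multiples of $|\gamma|$ using Equation~\eqref{eq:remove.beginning}, and prove both inclusions by intersecting with $C$ and using $r(D,\gamma)=D$ for $D\scj C$, finishing by applying $\varphi_{\gamma}$. The only cosmetic difference is in the first claim, where the paper obtains $\xi\in V_{\gamma}\cap V_{\gamma^{-1}}$ in one line from the order inequalities $(\gamma,C,\gamma)\leq(\gamma,r(\gamma),\gamma)$ and $(\gamma,C,\gamma)\leq(\eword,C,\eword)\leq(\eword,r(\gamma),\eword)$ together with upward closure of filters, whereas you rederive the same fact by an induction through the complete family.
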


\begin{proof}
The first part of the statement follows from the inequalities $(\gamma,C,\gamma)\leq (\gamma,r(\gamma),\gamma)$ and $(\gamma,C,\gamma)\leq (\eword,C,\eword)\leq (\eword,r(\gamma),\eword)$, where second inequality we uses that $r(C,\gamma)=C$. Also notice that, because $(\gamma,C)$ is a cycle without exits, we have that $C\in\acf_{reg}$. Then, using Theorem~\ref{thm:TightFiltersType}, given $\xi\in V_{(\gamma,C,\gamma)}$, we conclude that the labelled path associated with $\xi$ must be $\gamma^{\infty}$ and, for every $k\in\nn^*$, $(\gamma^k,C,\gamma^k)\in\xi$.

Now, due to the definition of a complete family and Theorem~\ref{thm.filters.in.E(S)}, in order to prove that $\xi=\varphi_{\gamma^{-1}}(\xi)$, it is sufficient to prove that $\xi_{k|\gamma|}=\varphi_{\gamma^{-1}}(\xi)_{k|\gamma|}$ for every $k\in\nn^*$. On the one hand, we have
\[\xi_{k|\gamma|}=\{A\in\acfrg{\gamma^k}\mid r(A,\gamma)\in \xi_{(k+1)|\gamma|}\},\]
and on the other hand, by Equation~\eqref{eq:remove.beginning}, we have
\[\varphi_{\gamma^{-1}}(\xi)_{k|\gamma|}=\usetr{\xi_{(k+1)|\gamma|}}{\acfrg{\gamma^k}}.\]

Suppose first that $A\in \xi_{k|\gamma|}$. Then $A\cap C\in \xi_{k|\gamma|}$ and, since $(\gamma,C)$ has no exits, $A\cap C=r(A\cap C,\gamma)\in \xi_{(k+1)|\gamma|}$. Hence $A\in \varphi_{\gamma^{-1}}(\xi)_{k|\gamma|}$.

Suppose now that $A\in \varphi_{\gamma^{-1}}(\xi)_{k|\gamma|}$, that is, there exists $B\in \xi_{(k+1)|\gamma|}$ such that $B\scj A$. Because $C\in\xi_{(k+1)|\gamma|}$, we may assume that $B\scj C$. Then, $B=r(B,\gamma)\scj r(A,\gamma)$, and hence $r(A,\gamma)\in \xi_{(k+1)|\gamma|}$. It follows that $A\in \xi_{k|\gamma|}$, concluding the proof that $\xi=\varphi_{\gamma^{-1}}(\xi)$.

Finally, by applying $\varphi_{\gamma}$ to the above equality, we get $\varphi_{\gamma}(\xi)=\xi$.
\end{proof}

\begin{lemma}\label{l:intiso}
Let $\beta\in\awstar$ and $\gamma\in\awplus$ be such that $\beta\gamma\in\awplus$. If $(\gamma,C)$ is a cycle without exits and $C\scj r(\beta)$, then $\{\beta\gamma\beta^{-1}\}\times V_{(\beta\gamma,C,\beta\gamma)}\scj \text{Iso}(\F\ltimes_\varphi\ftight)^0$ and $\{\beta\gamma^{-1}\beta^{-1}\}\times V_{(\beta\gamma,C,\beta\gamma)}\scj \text{Iso}(\F\ltimes_\varphi\ftight)^0$.
\end{lemma}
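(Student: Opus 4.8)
The plan is to show that the set $\{\beta\gamma\beta^{-1}\}\times V_{(\beta\gamma,C,\beta\gamma)}$ is an \emph{open} subset of $\F\ltimes_\varphi\ftight$ that is entirely contained in the isotropy bundle; since any open subset of the isotropy bundle lies in its interior, this yields the first inclusion, and the second will follow by applying the inversion homeomorphism of the groupoid.

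First I would settle openness and legitimacy. Since $\F$ is discrete, sets of the form $\{t\}\times U$ with $U$ open in $V_t$ are open in $\F\ltimes_\varphi\ftight$, so it suffices to check $V_{(\beta\gamma,C,\beta\gamma)}\scj V_{\beta\gamma\beta^{-1}}$. As $\beta\gamma\beta^{-1}$ is reduced with $\alpha$-part $\beta\gamma$ and $\delta$-part $\beta$, Equation~\eqref{eq:Vab-1} gives $V_{\beta\gamma\beta^{-1}}=V_{(\beta\gamma,\,r(\beta\gamma)\cap r(\beta),\,\beta\gamma)}$. Using $r(C,\gamma)=C$ (as $(\gamma,C)$ is a cycle), monotonicity of relative ranges, and $C\scj r(\beta)$ together with $r(\beta\gamma)=r(r(\beta),\gamma)$, one checks $C\scj r(\beta\gamma)$; this simultaneously shows $C\in\acfrg{\beta\gamma}$ (so $(\beta\gamma,C,\beta\gamma)\in E(S)$ is legitimate) and gives $(\beta\gamma,C,\beta\gamma)\leq(\beta\gamma,r(\beta\gamma)\cap r(\beta),\beta\gamma)$ in $E(S)$, whence the inclusion of basic open sets follows from upward closure of filters.

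The core of the argument is to verify that $\varphi_{\beta\gamma\beta^{-1}}(\xi)=\xi$ for every $\xi\in V_{(\beta\gamma,C,\beta\gamma)}$. Since there is no cancellation in the reduced products $\gamma\beta^{-1}$ and $\beta(\gamma\beta^{-1})$, two applications of semi-saturatedness of $\varphi$ \cite[Proposition~3.12]{GillesDanie} give the factorization $\varphi_{\beta\gamma\beta^{-1}}=\varphi_\beta\circ\varphi_\gamma\circ\varphi_{\beta^{-1}}$ of partial homeomorphisms. Next I would set $\eta=\varphi_{\beta^{-1}}(\xi)$ and, using Equation~\eqref{eq:remove.beginning} to compute $\eta_{|\gamma|}=\usetr{\xi_{|\beta\gamma|}}{\acfrg{\gamma}}$ together with $C\in\xi_{|\beta\gamma|}$ and $C\in\acfrg{\gamma}$, show that $(\gamma,C,\gamma)\in\eta$, i.e.\ $\eta\in V_{(\gamma,C,\gamma)}$. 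Lemma~\ref{l:isocycle} then yields $\varphi_\gamma(\eta)=\eta$, and therefore
\[\varphi_{\beta\gamma\beta^{-1}}(\xi)=\varphi_\beta(\varphi_\gamma(\varphi_{\beta^{-1}}(\xi)))=\varphi_\beta(\varphi_\gamma(\eta))=\varphi_\beta(\eta)=\varphi_\beta(\varphi_{\beta^{-1}}(\xi))=\xi,\]
so $(\beta\gamma\beta^{-1},\xi)$ lies in the isotropy bundle, establishing the first inclusion.

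Finally, for the second inclusion I would use $\beta\gamma^{-1}\beta^{-1}=(\beta\gamma\beta^{-1})^{-1}$ and the fact that the inversion map of $\F\ltimes_\varphi\ftight$ is a homeomorphism carrying the isotropy bundle, and hence its interior, onto itself. Since $\varphi_{\beta\gamma\beta^{-1}}$ fixes every point of $V_{(\beta\gamma,C,\beta\gamma)}$, inversion sends the open set $\{\beta\gamma\beta^{-1}\}\times V_{(\beta\gamma,C,\beta\gamma)}$ exactly onto $\{\beta\gamma^{-1}\beta^{-1}\}\times V_{(\beta\gamma,C,\beta\gamma)}$, giving the second inclusion immediately. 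I expect the main obstacle to be the bookkeeping in the factorization step: confirming that the relevant free-group products are reduced so that semi-saturatedness applies, and verifying that $\eta$ genuinely lands in $V_{(\gamma,C,\gamma)}$ so that Lemma~\ref{l:isocycle} can be invoked.
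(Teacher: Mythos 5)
Your proposal is correct and follows essentially the same route as the paper's proof: both establish $V_{(\beta\gamma,C,\beta\gamma)}\scj V_{\beta\gamma\beta^{-1}}$ via the order on $E(S)$ and Equation~\eqref{eq:Vab-1}, factor $\varphi_{\beta\gamma\beta^{-1}}=\varphi_\beta\circ\varphi_\gamma\circ\varphi_{\beta^{-1}}$ by semi-saturatedness, verify that $\eta=\varphi_{\beta^{-1}}(\xi)$ lies in $V_{(\gamma,C,\gamma)}$, and invoke Lemma~\ref{l:isocycle}. Your only deviation is handling the second inclusion through the inversion homeomorphism of the groupoid rather than repeating the computation ``analogously'' as the paper does, which is a harmless (and slightly cleaner) variant of the same argument.
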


\begin{proof}
Note that since $(C,\gamma)$ is a cycle, $C=r(C,\gamma)\scj r(\beta\gamma)$ and $C=r(C,\gamma^2)\scj r(\beta\gamma^2)$. Moreover, we have that $(\beta\gamma,C,\beta\gamma)\leq(\beta,r(\beta)\cap r(\beta\gamma),\beta)$ and $(\beta\gamma,C,\beta\gamma)\leq(\beta\gamma,r(\beta)\cap r(\beta\gamma),\beta\gamma)$ so that, by Equation~\eqref{eq:Vab-1},
$V_{(\beta\gamma,C,\beta\gamma)}\scj V_{\beta\gamma^{-1}\beta^{-1}}\cap V_{\beta\gamma\beta^{-1}}$.
Hence, $\{\beta\gamma\beta^{-1}\}\times V_{(\beta\gamma,C,\beta\gamma)}$ and $\{\beta\gamma^{-1}\beta^{-1}\}\times V_{(\beta\gamma,C,\beta\gamma)}$ are indeed open sets of $\F\ltimes_\varphi\ftight$. Also, because the cycle $(\gamma,C)$ has no exists, we have that $r(C,\gamma_{1,n})\in\acf_{reg}$ for all $0\leq n\leq |\gamma|$, and, using Theorem~\ref{thm:TightFiltersType}, we conclude that if $\xi\in V_{(\beta\gamma,C,\beta\gamma)}$, then its associated word must $\beta\gamma^{\infty}$.

We claim that if $\xi\in V_{(\beta\gamma,C,\beta\gamma)}$, then $\varphi_{\beta\gamma\beta^{-1}}(\xi)=\xi$ and $\varphi_{\beta\gamma^{-1}\beta^{-1}}(\xi)=\xi$. Indeed, $\xi\in V_{\beta}$, and if $\eta=\varphi_{\beta^{-1}}(\xi)$, then $(\gamma,C,\gamma)\in\eta$, and by Lemma~\ref{l:isocycle}, $\varphi_{\gamma}(\eta)=\eta=\varphi_{\gamma^{-1}}(\eta)$. Also, $\eta\in V_{\gamma^{-1}\beta^{-1}}\cap V_{\gamma\beta^{-1}}$ so that
\begin{align*}
    \varphi_{\beta\gamma\beta^{-1}}(\xi)&=\varphi_{\beta\gamma}(\varphi_{\beta^{-1}}(\xi)) \\
    & =\varphi_{\beta\gamma}(\eta) \\
    & =\varphi_{\beta}(\varphi_{\gamma}(\eta)) \\
    &=\varphi_{\beta}(\eta)\\
    &=\xi,
\end{align*}
and similarly $\varphi_{\beta\gamma^{-1}\beta^{-1}}(\xi)=\xi$.

Finally, for $\xi\in V_{(\beta\gamma,C,\beta\gamma)}$, we have that
\[s(\beta\gamma\beta^{-1},\xi)=\varphi_{\beta\gamma^{-1}\beta^{-1}}(\xi)=\xi=r(\beta\gamma\beta^{-1},\xi),\]
so that $\{\beta\gamma\beta^{-1}\}\times V_{(\beta\gamma,C,\beta\gamma)}\scj \text{Iso}(\F\ltimes_\varphi\ftight)^0$. Analogously, $\{\beta\gamma^{-1}\beta^{-1}\}\times V_{(\beta\gamma,C,\beta\gamma)}\scj \text{Iso}(\F\ltimes_\varphi\ftight)^0$.
\end{proof}

\begin{lemma}\label{lem:NeighbEqualAllN}
Let $(\gamma,C)$ be a cycle without exits and $\beta\in\awstar$ be such that $\beta\gamma\in\awstar$. Then, for all $n\in\nn$, $V_{(\beta,C,\beta)}=V_{(\beta\gamma^n,C,\beta\gamma^n)}$.
\end{lemma}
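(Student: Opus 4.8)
The plan is to prove the two inclusions separately. Throughout I use that, since $(\gamma,C)$ is a cycle, $r(C,\gamma)=C$, whence by the composition rule $r(A,\mu\nu)=r(r(A,\mu),\nu)$ for relative ranges we get $r(C,\gamma^n)=C$ for every $n$; together with $C\scj r(\beta)$ (implicit in the statement, as it is what makes $(\beta,C,\beta)\in E(S)$) this also yields $C=r(C,\gamma^n)\scj r(r(\beta),\gamma^n)=r(\beta\gamma^n)$, so that $(\beta\gamma^n,C,\beta\gamma^n)\in E(S)$ and $V_{(\beta\gamma^n,C,\beta\gamma^n)}$ is a genuine basic open set. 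The inclusion $V_{(\beta\gamma^n,C,\beta\gamma^n)}\scj V_{(\beta,C,\beta)}$ is then formal: writing $\beta\gamma^n=\beta\cdot\gamma^n$, the order description recalled in Section~\ref{section:preliminaries} shows $(\beta\gamma^n,C,\beta\gamma^n)\leq(\beta,C,\beta)$ precisely because $C\scj r(C,\gamma^n)=C$; since filters are upward closed, any $\xi$ containing the smaller idempotent contains the larger one.

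For the reverse inclusion I first record what the absence of exits provides. Put $C_k=r(C,\gamma_{1,k})$ for $0\leq k\leq|\gamma|$, so that $C_0=C$, $C_{|\gamma|}=r(C,\gamma)=C$, and each $C_k$ is nonempty (indeed $r(C_k,\gamma_{k+1,|\gamma|})=C\neq\emptyset$). Taking $B=C_k$ in the definition of an exit, the no-exit hypothesis forces $\lbf(C_k\dgraph^1)=\{\gamma_{k+1}\}$ for $0\leq k\leq|\gamma|-1$; in particular each $C_k$ is regular, and $C_k$ admits no nonempty subset $B\in\acf$ contained in $\dgraph^0_{sink}$, for such a $B$ would satisfy $\lbf(B\dgraph^1)=\emptyset\neq\{\gamma_{k+1}\}$.

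Now let $\xi=\xi^\alpha\in V_{(\beta,C,\beta)}$, so by Remark~\ref{remark.when.in.xialpha} the word $\beta$ is a beginning of $\alpha$ and $C\in\xi_{|\beta|}$. I claim that for every $0\leq j\leq n|\gamma|$, setting $k=j\bmod|\gamma|$, the word $\alpha$ has $\beta\gamma^{\lfloor j/|\gamma|\rfloor}\gamma_{1,k}$ as a beginning and $C_k\in\xi_{|\beta|+j}$; the case $j=0$ is the hypothesis. For the inductive step put $\ell=|\beta|+j$ and assume $C_k\in\xi_\ell$. The word cannot terminate at $\ell$: were $|\alpha|=\ell$, then Theorem~\ref{thm:TightFiltersType}(ii) applied to $C_k\in\xi_{|\alpha|}$ would force either $\lbf(C_k\dgraph^1)$ infinite or a nonempty sink-subset of $C_k$, both excluded above. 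Hence $\alpha$ has a letter $\alpha_{\ell+1}$, and the completeness relation gives $r(C_k,\alpha_{\ell+1})\in\xi_{\ell+1}$, which is nonempty; therefore $\alpha_{\ell+1}\in\lbf(C_k\dgraph^1)=\{\gamma_{k+1}\}$, i.e.\ $\alpha_{\ell+1}=\gamma_{k+1}$ and $r(C_k,\gamma_{k+1})=C_{k+1}\in\xi_{\ell+1}$, advancing the claim. At $j=n|\gamma|$ we conclude that $\beta\gamma^n$ is a beginning of $\alpha$ and $C=C_0\in\xi_{|\beta|+n|\gamma|}$, so $(\beta\gamma^n,C,\beta\gamma^n)\in\xi$ by Remark~\ref{remark.when.in.xialpha}; that is, $\xi\in V_{(\beta\gamma^n,C,\beta\gamma^n)}$. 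This proves $V_{(\beta,C,\beta)}\scj V_{(\beta\gamma^n,C,\beta\gamma^n)}$ and hence equality.

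The main obstacle is this reverse inclusion, and inside it the step ruling out premature termination of the word: this is exactly where the regularity coming from the no-exit condition and the finite-type clause of the tightness characterization (Theorem~\ref{thm:TightFiltersType}) must be used together. Once a tight filter over $(\beta,C,\beta)$ is forced to read the letters of $\gamma$ one at a time, the periodicity $C_{|\gamma|}=C_0$ lets the induction close up after each full traversal of the cycle.
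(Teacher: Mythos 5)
Your proof is correct and takes essentially the same approach as the paper's: tightness (Theorem~\ref{thm:TightFiltersType}) combined with the no-exit condition forces the associated word of any filter over $(\beta,C,\beta)$ to keep traversing $\gamma$, and the completeness relation propagates $C$ along the filter family by induction, which is exactly the paper's argument (stated tersely by appeal to Lemma~\ref{l:isocycle}). Your separate handling of the easy inclusion via the order on $E(S)$ is just an explicit spelling-out of what the paper leaves implicit.
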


\begin{proof}
As in the proof of Lemma~\ref{l:isocycle}, because $(\gamma,C)$ is a cycle without exits, we see that for any element $\xi\in V_{(\beta\gamma^n,C,\beta\gamma^n)}$, the associated word of $\xi$ must be $\xi\gamma^{\infty}$. Now, since $(\gamma,C)$ is a cycle, for all $n\in\nn$, $C=r(C,\gamma^n)$. Using 
Theorem~\ref{thm:TightFiltersType} and an induction argument, we then see that for $\xi\in\ftight$, we have $C\in\xi_{|\beta|}$ if and only if $C\in\xi_{|\beta\gamma^n|}$. Hence $V_{(\beta,C,\beta)}=V_{(\beta\gamma^n,C,\beta\gamma^n)}$.
\end{proof}

\begin{proposition}\cite[Proposition 6.10]{GillesDanie}\label{prop:non.trivial.isotropy}
Let $\lspace$ be a normal labelled space. A tight filter $\xia$ has non-trivial isotropy if and only if there exist $\beta,\gamma\in\awplus$ such that $\alpha=\beta\gamma^{\infty}$ and for all $n\in\nn^*$ and all $A\in\xi_{|\beta\gamma^n|}$ we have that $A\cap r(A,\gamma)\neq\emptyset$. 
\end{proposition}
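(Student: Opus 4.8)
The plan is to reduce the statement to a fixed-point problem for the periodic tail of $\xi$ and then translate that fixed point into the stated self-intersection condition using the ultrafilter description of tight filters of infinite type. First I would invoke Remark~\ref{rmk:iso}: if $\xi=\xi^\alpha$ has non-trivial isotropy, then necessarily $\alpha=\beta\gamma^\infty$ with $\beta\in\awstar$, $\gamma\in\awplus$ (after absorbing one copy of $\gamma$ into $\beta$ we may assume $\beta\in\awplus$), and the only candidates for a non-trivial isotropy element are $t=\beta\gamma^{\pm1}\beta^{-1}$. Since these are mutually inverse it suffices to treat $t=\beta\gamma\beta^{-1}$. Setting $\eta=\varphi_{\beta^{-1}}(\xi)$, which has associated word $\gamma^\infty$, the same semi-saturated computation as in the proof of Lemma~\ref{l:intiso} (namely $\varphi_{\beta\gamma^{-1}\beta^{-1}}=\varphi_\beta\circ\varphi_{\gamma^{-1}}\circ\varphi_{\beta^{-1}}$ together with injectivity of $\varphi_\beta$) shows that $\xi\in V_t$ with $\varphi_{t^{-1}}(\xi)=\xi$ if and only if $\varphi_{\gamma^{-1}}(\eta)=\eta$. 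As $\eta$ is tight of infinite type, Theorem~\ref{thm:TightFiltersType}(i) guarantees that every component $\eta_j$ is an ultrafilter; I write $m=|\gamma|$ and $\zeta_n:=\eta_{nm}\in\acfrg{\gamma^n}$.

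Next I would reformulate the equality $\varphi_{\gamma^{-1}}(\eta)=\eta$. Both filters have word $\gamma^\infty$, so by the downward determination of complete families it suffices to compare them on the cofinal set of levels $nm$. Using \eqref{eq:remove.beginning} one gets $\varphi_{\gamma^{-1}}(\eta)_{nm}=\usetr{\zeta_{n+1}}{\acfrg{\gamma^n}}=:U_n$, so the fixed point is equivalent to $\zeta_n=U_n$ for all $n\geq 1$. I would then record three elementary facts, all resting on the containment $r(\gamma^{n+1})\scj r(\gamma^n)$ (which holds because a path labelled $\gamma^{n+1}$ ends at the endpoint of its $\gamma^n$-tail): the one-period relation $A\in\zeta_n\Leftrightarrow r(A,\gamma)\in\zeta_{n+1}$, obtained by composing the complete-family relation across one period; the ultrafilter description $B\in U_n\Leftrightarrow B\cap r(\gamma^{n+1})\in\zeta_{n+1}$; and the observation that $r(A,\gamma)\in\acfrg{\gamma^n}$ whenever $A\in\acfrg{\gamma^n}$.

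With these in hand both implications become short. For the necessity of the condition, if $\zeta_n=U_n$ and $A\in\zeta_n$, then $r(A,\gamma)\in\zeta_{n+1}$ and $A\cap r(\gamma^{n+1})\in\zeta_{n+1}$; intersecting these two members of the ultrafilter $\zeta_{n+1}$ and using $r(A,\gamma)\scj r(\gamma^{n+1})$ yields $A\cap r(A,\gamma)\in\zeta_{n+1}$, hence $A\cap r(A,\gamma)\neq\emptyset$. For the converse I expect the genuinely non-obvious point — and the main obstacle — to be upgrading the \emph{nonemptiness} hypothesis $A\cap r(A,\gamma)\neq\emptyset$ into an \emph{ultrafilter-membership} statement. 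The device I would use is that for any $A\in\acfrg{\gamma^n}$ the set $A\setminus r(A,\gamma)$ is disjoint from its own $\gamma$-range; hence the requirement that $A\cap r(A,\gamma)\neq\emptyset$ for all $A\in\zeta_n$ is equivalent to $r(A,\gamma)\in\zeta_n$ for all $A\in\zeta_n$. Granting this preservation property at level $n+1$, for $B\in U_n$ I would set $A'=B\cap r(\gamma^{n+1})\in\zeta_{n+1}$ and invoke weak left-resolvingness, $r(A',\gamma)=r(B,\gamma)\cap r(\gamma^{n+2})\scj r(B,\gamma)$, to deduce $r(B,\gamma)\in\zeta_{n+1}$, i.e. $B\in\zeta_n$; thus $U_n\scj\zeta_n$ and, both being ultrafilters, $\zeta_n=U_n$.

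Finally I would dispatch the remaining bookkeeping, namely that the self-intersection condition for the ultrafilters $\zeta_n=\eta_{nm}$ is equivalent to the condition stated for $\xi_{|\beta\gamma^n|}$. This follows routinely from the pushforward-ultrafilter description $\eta_{nm}=\usetr{\xi_{|\beta\gamma^n|}}{\acfrg{\gamma^n}}$ (so that $B\in\eta_{nm}\Leftrightarrow B\cap r(\beta\gamma^n)\in\xi_{|\beta\gamma^n|}$) together with weak left-resolvingness, and I would carry it out through the same preservation reformulation used above, transporting the property ``$r(\cdot,\gamma)$ preserves the ultrafilter'' back and forth along $\varphi_{\beta^{-1}}$. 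Reversing the reduction of the first paragraph then turns $\varphi_{\gamma^{-1}}(\eta)=\eta$ into the desired isotropy element $t=\beta\gamma\beta^{-1}$, completing the equivalence.
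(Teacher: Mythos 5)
Your argument is correct, but there is no internal proof to measure it against: the paper does not prove this proposition at all, it imports it verbatim from \cite[Proposition~6.10]{GillesDanie}, and the only trace of the cited argument is the sentence before Lemma~\ref{l:isointer}, which indicates that the external proof works with the fixed-point equation $\varphi_{t^{-1}}(\xi)=\xi$ directly at the levels $\xi_{|\beta\gamma^n|}$ via Equation~\eqref{vaccinatedaligator}. Your route is organized differently and, to my mind, more cleanly: you conjugate by $\varphi_{\beta^{-1}}$ so that everything happens in the tail filter $\eta=\varphi_{\beta^{-1}}(\xi)$, where Equation~\eqref{eq:remove.beginning} turns isotropy into the identity $\zeta_n=\usetr{\zeta_{n+1}}{\acfrg{\gamma^n}}$ at the cofinal levels $n|\gamma|$ (cofinality suffices because complete families are determined downwards, by Theorem~\ref{thm.filters.in.E(S)}), and your disjointness device --- $A\setminus r(A,\gamma)$ is disjoint from its own $\gamma$-range, so an ultrafilter in the Boolean algebra $\acfrg{\gamma^n}$ satisfying the nonemptiness condition must contain $A\cap r(A,\gamma)$ and hence $r(A,\gamma)$ --- is exactly the right lever for upgrading nonemptiness to membership; this is where normality and Theorem~\ref{thm:TightFiltersType}(i) do their work. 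Three small points should be made explicit in a write-up, none of which is a genuine gap: (a) to pass from $U_n\scj\zeta_n$ to $U_n=\zeta_n$ you need $U_n$ to be an ultrafilter, which follows in one line because $U_n=\varphi_{\gamma^{-1}}(\eta)_{n|\gamma|}$ is a level of a tight filter of infinite type; (b) $\varphi_\beta\circ\varphi_{\gamma^{-1}}\circ\varphi_{\beta^{-1}}$ is only \emph{contained} in $\varphi_{\beta\gamma^{-1}\beta^{-1}}$ as a partial map, so your reduction to $\varphi_{\gamma^{-1}}(\eta)=\eta$ needs the domain checks $\xi\in V_\beta$, $\eta\in V_\gamma$, $\eta\in V_{\beta^{-1}}$, all of which are automatic (the first two because $r(\beta)$ and $r(\gamma)$ are top elements of the relevant Boolean algebras, the third because $\eta$ lies in the range of $\varphi_{\beta^{-1}}$); (c) the final transfer between $\eta$-levels and $\xi$-levels is even easier than you suggest and needs no weak left-resolvingness: one direction is the inclusion $\xi_{|\beta\gamma^n|}\scj\eta_{n|\gamma|}$, and for the other, given $B\in\eta_{n|\gamma|}$, the set $C=B\cap r(\beta\gamma^n)$ lies in $\xi_{|\beta\gamma^n|}$ and satisfies $C\cap r(C,\gamma)\scj B\cap r(B,\gamma)$ by monotonicity of relative ranges.
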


The proof of the above proposition also shows the following lemma.

\begin{lemma}\label{l:isointer}
Let $(t,\xi) \in \text{Iso}(\F\ltimes_\varphi\ftight)$ with $t=\beta\gamma\beta^{-1}$ or $t=\beta\gamma^{-1}\beta^{-1}$ where $\beta\gamma^{\infty}$ is the labelled path associated with $\xi$. Then for all $n\in\nn^*$ and all $A\in\xi_{|\beta\gamma^n|}$ we have that $A\cap r(A,\gamma)\neq\emptyset$.
\end{lemma}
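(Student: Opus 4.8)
The plan is to delete the transient head $\beta$, reduce everything to the pure cycle $\gamma$, and then play the complete-family relation against the isotropy condition. Write $m=|\beta|$ and $p=|\gamma|$, and set $\eta=\varphi_{\beta^{-1}}(\xi)$, which by the discussion around \eqref{eq:remove.beginning} is a tight filter with associated word $\gamma^{\infty}$; since $\xi$ is of infinite type, Theorem~\ref{thm:TightFiltersType} guarantees that every $\eta_{k}$ is an ultrafilter in $\acfrg{(\gamma^{\infty})_{1,k}}$. First I would observe that the two possibilities for $t$ give the same hypothesis: because $\varphi_{\beta\gamma\beta^{-1}}$ and $\varphi_{\beta\gamma^{-1}\beta^{-1}}$ are mutually inverse, the isotropy condition $\varphi_{t^{-1}}(\xi)=\xi$ is in either case equivalent to $\varphi_{\beta\gamma^{-1}\beta^{-1}}(\xi)=\xi$. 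Using semi-saturation of $\varphi$ to factor $\varphi_{\beta\gamma^{-1}\beta^{-1}}=\varphi_\beta\circ\varphi_{\gamma^{-1}}\circ\varphi_{\beta^{-1}}$ (exactly as in the computation in the proof of Lemma~\ref{l:intiso}) and cancelling the injective map $\varphi_\beta$, this reduces to the clean statement $\varphi_{\gamma^{-1}}(\eta)=\eta$.

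Next I would extract two identities for $\eta$, evaluated at the levels $np$ that are full multiples of the period. On one hand, iterating the complete-family relation of $\eta$ over one period and using the composition law $r(r(A,\mu),a)=r(A,\mu a)$ for relative ranges yields
\[\eta_{np}=\{A\in\acfrg{\gamma^{n}}\mid r(A,\gamma)\in\eta_{(n+1)p}\}.\]
On the other hand, applying \eqref{eq:remove.beginning} to $\varphi_{\gamma^{-1}}(\eta)$ (noting $(\gamma^{\infty})_{1,np}=\gamma^{n}$) and invoking $\varphi_{\gamma^{-1}}(\eta)=\eta$ gives
\[\eta_{np}=\usetr{\eta_{(n+1)p}}{\acfrg{\gamma^{n}}}.\]

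With these in hand the conclusion is immediate. Fix $n\in\nn^{*}$ and $A\in\eta_{np}$. The first identity gives $r(A,\gamma)\in\eta_{(n+1)p}$, while the second produces some $B\in\eta_{(n+1)p}$ with $B\scj A$. Since $\eta_{(n+1)p}$ is an ultrafilter, and in particular a filter that is closed under intersections and omits $\emptyset$, we get $r(A,\gamma)\cap B\in\eta_{(n+1)p}$ and hence $r(A,\gamma)\cap B\neq\emptyset$; as $B\scj A$ this forces $A\cap r(A,\gamma)\neq\emptyset$. Finally, since $r(\beta\gamma^{n})\scj r(\gamma^{n})$, every element of $\xi_{m+np}\scj\acfrg{\beta\gamma^{n}}$ already lies in $\acfrg{\gamma^{n}}$, and \eqref{eq:remove.beginning} shows $\xi_{m+np}\scj\usetr{\xi_{m+np}}{\acfrg{\gamma^{n}}}=\eta_{np}$; thus the inequality just proved for $\eta_{np}$ holds in particular for every $A\in\xi_{|\beta\gamma^{n}|}$, which is the assertion.

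I expect the only delicate points to be bookkeeping ones: correctly tracking the shift-and-upper-set formula \eqref{eq:remove.beginning} through a full period, and checking that the two descriptions of $t$ collapse to a single hypothesis. Neither is a genuine obstacle, which is consistent with the remark in the text that this lemma is already contained in the proof of Proposition~\ref{prop:non.trivial.isotropy}; indeed, the computation above is precisely the necessity half of that proof, read off for the specific $\beta$ and $\gamma$ naming $t$.
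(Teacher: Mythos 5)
Your overall strategy is sound, and it supplies a self-contained argument where the paper gives none (the paper merely observes that the proof of Proposition~\ref{prop:non.trivial.isotropy}, which is cited from an external reference, also proves this lemma). The two identities for $\eta_{np}$, their combination via the filter property of $\eta_{(n+1)p}$, and the final inclusion $\xi_{|\beta\gamma^{n}|}\scj\eta_{np}$ are all correct.

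However, the reduction step $\varphi_{\gamma^{-1}}(\eta)=\eta$ is justified in a way that would fail as written. Semi-saturation gives $\varphi_{st}=\varphi_s\circ\varphi_t$ only when $|st|=|s|+|t|$ in $\F$, and the word $\beta\gamma^{-1}\beta^{-1}$ need not be reduced: if $\beta$ and $\gamma$ end in the same letter there is cancellation (for instance $\beta=a$, $\gamma=ba$ gives $\beta\gamma^{-1}\beta^{-1}=(ab)^{-1}$, of length $2$ rather than $4$), and such decompositions are allowed by the hypotheses of the lemma. So the asserted factorization $\varphi_{\beta\gamma^{-1}\beta^{-1}}=\varphi_\beta\circ\varphi_{\gamma^{-1}}\circ\varphi_{\beta^{-1}}$ is not available; in general one only has the containment of partial maps $\varphi_\beta\circ\varphi_{\gamma^{-1}}\circ\varphi_{\beta^{-1}}\scj\varphi_{\beta\gamma^{-1}\beta^{-1}}$, whose domain can be strictly smaller. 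To ``cancel $\varphi_\beta$'' you must first know that $\xi$ lies in the domain of the composite, i.e.\ that $\varphi_{\gamma^{-1}}(\eta)\in V_{\beta^{-1}}$; but that membership is essentially the equality $\varphi_{\gamma^{-1}}(\eta)=\eta$ you are trying to establish, so the argument as stated is circular. The gap is easily repaired by computing $\varphi_{(\beta\gamma)^{-1}}(\xi)$ in two ways. Since $\gamma^{-1}\beta^{-1}=(\beta\gamma)^{-1}$ involves no cancellation, semi-saturation legitimately gives $\varphi_{(\beta\gamma)^{-1}}=\varphi_{\gamma^{-1}}\circ\varphi_{\beta^{-1}}$, whence $\varphi_{(\beta\gamma)^{-1}}(\xi)=\varphi_{\gamma^{-1}}(\eta)$. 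On the other hand, $\gamma^{-1}\beta^{-1}=\beta^{-1}\cdot(\beta\gamma^{-1}\beta^{-1})$, and the composite $\varphi_{\beta^{-1}}\circ\varphi_{\beta\gamma^{-1}\beta^{-1}}$, which is a restriction of $\varphi_{\gamma^{-1}\beta^{-1}}$, is defined at $\xi$ precisely because $\varphi_{\beta\gamma^{-1}\beta^{-1}}(\xi)=\xi\in V_{\beta}$; hence $\varphi_{(\beta\gamma)^{-1}}(\xi)=\varphi_{\beta^{-1}}(\xi)=\eta$. Comparing the two computations yields $\varphi_{\gamma^{-1}}(\eta)=\eta$, after which the rest of your proof goes through unchanged.
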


\begin{lemma}\label{l:isogoback}
Let $(t,\xi) \in \text{Iso}(\F\ltimes_\varphi\ftight)$ with $t=\beta\gamma\beta^{-1}$ or $t=\beta\gamma^{-1}\beta^{-1}$, where $\beta\gamma^{\infty}$ is the labelled path associated with $\xi$. Then, for all $n\in\nn$ and all $C\in\xi_{|\beta\gamma^n|}$, we have that $C\cap r(\beta)\in\xi_{|\beta|}$.
\end{lemma}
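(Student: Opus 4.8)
The plan is to strip off the transient initial segment $\beta$ and reduce everything to the filter sitting over the purely periodic tail, where the isotropy hypothesis becomes an exact self-similarity. Write $p=|\beta|$ and $q=|\gamma|$, so that $|\beta\gamma^n|=p+nq$, and put $\eta=\varphi_{\beta^{-1}}(\xi)$. Since $\xi$ has associated word $\beta\gamma^{\infty}$ it lies in $V_{\beta}=\mathrm{dom}(\varphi_{\beta^{-1}})$, so $\eta$ is well defined, has associated word $\gamma^{\infty}$, and by Equation~\eqref{eq:remove.beginning} its complete family satisfies $\eta_m=\usetr{\xi_{p+m}}{\acfrg{\gamma^\infty_{1,m}}}$ for every $m$; in particular $\eta_0=\usetr{\xi_p}{\acf}$ and $\eta_{nq}=\usetr{\xi_{p+nq}}{\acfrg{\gamma^n}}$.

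Next I would convert $\varphi_t(\xi)=\xi$ into $\varphi_{\gamma^{-1}}(\eta)=\eta$. Replacing $t$ by $t^{-1}$ if necessary (the isotropy bundle is closed under inversion), assume $t=\beta\gamma\beta^{-1}$. Using only the forward composition axiom of a partial action, namely $\varphi_g\circ\varphi_h\subseteq\varphi_{gh}$ whenever the intermediate point lies in the relevant domain, one computes $\varphi_{\gamma^{-1}}(\eta)=\varphi_{(\beta\gamma)^{-1}}(\xi)$ and, feeding in $\varphi_t(\xi)=\xi$,
\[\varphi_{(\beta\gamma)^{-1}}(\xi)=\varphi_{(\beta\gamma)^{-1}}(\varphi_t(\xi))=\varphi_{(\beta\gamma)^{-1}t}(\xi)=\varphi_{\beta^{-1}}(\xi)=\eta.\]
The required domain memberships ($\xi\in V_{\beta\gamma}$, $\xi\in V_{t^{-1}}$, and $\eta\in V_{\gamma}$) all follow from $\xi$ having word $\beta\gamma^{\infty}$ together with the isotropy condition $\xi\in V_t\cap V_{t^{-1}}$. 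Once $\varphi_{\gamma^{-1}}(\eta)=\eta$ is known, semi-saturatedness gives $\varphi_{\gamma^{-n}}(\eta)=\eta$, and Equation~\eqref{eq:remove.beginning} applied with $\gamma^{n}$ in place of $\beta$ yields the periodicity $\eta_0=\varphi_{\gamma^{-n}}(\eta)_0=\usetr{\eta_{nq}}{\acf}$.

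Finally I would transfer this back to $\xi$. Combining $\eta_0=\usetr{\xi_p}{\acf}$, $\eta_{nq}=\usetr{\xi_{p+nq}}{\acfrg{\gamma^n}}$, and the periodicity above (collapsing the iterated upper set via $\xi_{p+nq}\subseteq\acfrg{\gamma^n}$), one gets $\usetr{\xi_p}{\acf}=\usetr{\xi_{p+nq}}{\acf}$; that is, $\xi_p$ and $\xi_{p+nq}$ generate the same upper set in $\acf$. Now take $C\in\xi_{p+nq}$ (the claim being vacuous otherwise, and this case already forces $\xi_p\neq\emptyset$ since $\eta_{nq}\neq\emptyset$). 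Then $C\in\usetr{\xi_{p+nq}}{\acf}=\usetr{\xi_p}{\acf}=\eta_0$, and also $r(\beta)\in\eta_0$ because every element of $\xi_p$ is contained in $r(\beta)$. As $\eta_0$ is closed under intersection, $C\cap r(\beta)\in\eta_0$; and $C\cap r(\beta)\in\acfrg{\beta}$, while an element of $\acfrg{\beta}$ lies in $\xi_p$ exactly when it lies in $\eta_0=\usetr{\xi_p}{\acf}$ (using that $\xi_p$ is upward closed in the Boolean algebra $\acfrg{\beta}$). Hence $C\cap r(\beta)\in\xi_p=\xi_{|\beta|}$, as required. When $\beta=\eword$ one has $\eta=\xi$ and the argument degenerates to $C\in\xi_{nq}\Rightarrow C\in\xi_0$, so no separate treatment is needed there.

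The main obstacle is the middle step, deriving $\varphi_{\gamma^{-1}}(\eta)=\eta$ cleanly: the cancellations in $\F$ are trivial, but each use of $\varphi_g\circ\varphi_h=\varphi_{gh}$ must be paired with a check that the intermediate filter sits in the correct basic domain $V_{(\cdot)}$, and these memberships must be read off from the level descriptions of $\xi$ and $\eta$. A robust alternative, should the domain bookkeeping prove delicate, is to bypass abstract composition and apply the explicit formula \eqref{vaccinatedaligator} to $\varphi_t(\xi)$ directly at the levels $p+(m+1)q$; the equality $\varphi_t(\xi)=\xi$ then produces the recursion $\xi_{p+(m+1)q}=\{A\cap r(\beta\gamma^{m+1})\mid A\in\usetr{\xi_{p+mq}}{\acfrg{\gamma^m}}\}$, from which the identity $\usetr{\xi_p}{\acf}=\usetr{\xi_{p+nq}}{\acf}$ of the previous paragraph can be extracted by induction, keeping the whole argument inside the combinatorics of Theorem~\ref{thm.filters.in.E(S)}.
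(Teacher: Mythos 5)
Your main argument is correct, and it organizes the proof differently from the paper, though both rest on the same tool (the explicit action of $\varphi$ on complete families). The paper never leaves $\xi$: since $(\beta\gamma\beta^{-1})^{-1}=\beta\gamma^{-1}\beta^{-1}$, isotropy gives $\varphi_{\beta\gamma^{-1}\beta^{-1}}(\xi)=\xi$ in both cases, and a single application of Equation~\eqref{vaccinatedaligator} to this element yields the \emph{downward} recursion
\[\xi_{|\beta\gamma^n|}=\{B\cap r(\beta\gamma^n)\mid B\in \usetr{\xi_{|\beta\gamma^{n+1}|}}{\acfrg{\gamma^n}}\},\]
so that $C\in\xi_{|\beta\gamma^{n+1}|}$ forces $C\cap r(\beta\gamma^{n})\in\xi_{|\beta\gamma^{n}|}$; induction then gives $C\cap r(\beta)\cap r(\beta\gamma)\cap\cdots\cap r(\beta\gamma^{n-1})\in\xi_{|\beta|}$, and upward closedness of the filter $\xi_{|\beta|}$ in $\acfrg{\beta}$ finishes. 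You instead conjugate by $\varphi_{\beta^{-1}}$, prove $\varphi_{\gamma^{-1}}(\eta)=\eta$ via the composition law for partial actions (your domain checks do go through: $\xi\in V_{\beta\gamma}$ and $\eta\in V_{\gamma}$ hold because the relevant level filters are non-empty, being filters of a complete family at positive levels), convert isotropy into the periodicity $\usetr{\xi_{|\beta|}}{\acf}=\usetr{\xi_{|\beta\gamma^n|}}{\acf}$, and descend to $\xi_{|\beta|}$ using that it is upward closed in $\acfrg{\beta}$. Your route buys a transparent structural statement (the upper-set filters over the periodic tail are literally periodic) and avoids the nested intersections, at the cost of the partial-action bookkeeping; the paper's route buys brevity by staying entirely inside the combinatorics of Theorem~\ref{thm.filters.in.E(S)}. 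One caution about your fallback sketch in the last paragraph: applying \eqref{vaccinatedaligator} to $\varphi_t$ with $t=\beta\gamma\beta^{-1}$ produces a recursion expressing the \emph{higher} level $\xi_{p+(m+1)q}$ in terms of the lower one, which is the wrong direction for the descent you need; extracting $\usetr{\xi_p}{\acf}\supseteq\usetr{\xi_{p+nq}}{\acf}$ from it requires an extra ingredient (e.g.\ that the levels of an infinite-type tight filter are ultrafilters, plus properness), whereas using $t^{-1}$ instead gives the downward recursion directly --- and that is exactly the paper's proof.
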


\begin{proof}
Because $(\beta\gamma\beta^{-1})^{-1}=\beta\gamma^{-1}\beta^{-1}$, in both cases, we have that $\varphi_{\beta\gamma^{-1}\beta^{-1}}(\xi)=\xi$. Using Equation~\eqref{vaccinatedaligator}, we get that
\[\xi_{|\beta\gamma^n|}=\{B\cap r(\beta\gamma^n)\mid B\in \usetr{\xi_{|\beta\gamma^{n+1}|}}{\acfrg{\gamma^n}}\}.\]

A simple induction argument then shows that for all $n\in\nn$ and $C\in\xi_{|\beta\gamma^n|}$, we have that
\[C\cap r(\beta)\cap r(\beta\gamma)\cap\cdots\cap r(\beta\gamma^{n-1})\in\xi_{|\beta|},\]
and because $\xi_{|\beta|}$ is a filter, we conclude that $C\cap r(\beta)\in\xi_{|\beta|}$.
\end{proof}

\begin{lemma}\label{lem:ExistCycleNeighb}
Let $(t,\xi) \in \text{Iso}(\F\ltimes_\varphi\ftight)$ with $t=\beta\gamma\beta^{-1}$ or $t=\beta\gamma^{-1}\beta^{-1}$, where $\beta\gamma^{\infty}$ is the labelled path associated with $\xi$. 
Then, for every neighborhood $U\scj \ftight$ of $\xi$, there exists $k>1$  and  $B\in\xi_{|\beta\gamma^k|}$ such that
\[ \xi\in V_{(\beta\gamma^k,B,\beta\gamma^k)}\scj U. \]
\end{lemma}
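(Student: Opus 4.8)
The plan is to reduce to a basic neighborhood and then to manufacture the cylinder set $V_{(\beta\gamma^k,B,\beta\gamma^k)}$ by choosing the level $k$ large and the set $B\in\xi_{|\beta\gamma^k|}$ so as to force the positive generator defining $U$ and to kill the negative ones; the only feature of $(t,\xi)$ I will use is that the labelled path associated with $\xi$ is the genuinely infinite word $\beta\gamma^{\infty}$ with $\gamma\in\awplus$. Since the sets $V_{e:e_1,\ldots,e_n}$ form a basis, I may assume $U=V_{e:e_1,\ldots,e_n}$ with $e\in\xi$ and $e_1,\ldots,e_n\notin\xi$; dropping those $e_i$ equal to $0$, write $e=(\delta,A,\delta)$ and $e_i=(\delta_i,A_i,\delta_i)$ in $E(S)$. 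As $\xi$ is of infinite type, Theorem~\ref{thm:TightFiltersType}(i) makes every non-empty $\xi_m$ an ultrafilter in the Boolean algebra $\acfrg{(\beta\gamma^{\infty})_{1,m}}$; in particular $\xi_{|\beta\gamma^k|}$ is an ultrafilter for every $k\geq 1$. I then fix $k>1$ with $|\beta\gamma^k|>\max\{|\delta|,|\delta_1|,\ldots,|\delta_n|\}$, possible because $\gamma\in\awplus$. By Remark~\ref{remark.when.in.xialpha}, $\delta$ is a beginning of $\beta\gamma^{\infty}$, hence a proper beginning of $\beta\gamma^k$; and each $\delta_i$, being strictly shorter than $\beta\gamma^k$, is either a proper beginning of $\beta\gamma^k$ or incomparable with it.

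Next I construct $B$. Writing $\beta\gamma^k=\delta\mu$, I set $A'=r(A,\mu)$; iterating the complete family relation on $A\in\xi_{|\delta|}$ (and using $r(A,\mu)=r(r(A,\cdot),\cdot)$) gives $A'\in\xi_{|\beta\gamma^k|}$, and the description of the order on $E(S)$ yields $(\beta\gamma^k,B,\beta\gamma^k)\leq(\delta,A,\delta)$ whenever $B\scj A'$. For the excluded generators I distinguish two cases. If $\delta_i$ is incomparable with $\beta\gamma^k$, then no filter whose word begins with $\beta\gamma^k$ can have $\delta_i$ as a beginning of its word, so such an $e_i$ is excluded automatically. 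If $\delta_i$ is a proper beginning, write $\beta\gamma^k=\delta_i\nu_i$; from $A_i\notin\xi_{|\delta_i|}$ the complete family relation gives $r(A_i,\nu_i)\notin\xi_{|\beta\gamma^k|}$, so, $\xi_{|\beta\gamma^k|}$ being an ultrafilter, the relative complement $D_i=r(\beta\gamma^k)\setminus r(A_i,\nu_i)$ belongs to $\xi_{|\beta\gamma^k|}$ and is disjoint from $r(A_i,\nu_i)$. I then put $B=A'\cap\bigcap_{i\,:\,\delta_i\text{ begins }\beta\gamma^k}D_i$; as a finite intersection of members of the filter $\xi_{|\beta\gamma^k|}$, it is a non-empty element of $\acfrg{\beta\gamma^k}$ lying in $\xi_{|\beta\gamma^k|}$, so $(\beta\gamma^k,B,\beta\gamma^k)\in E(S)$.

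It remains to check the two required properties. Membership $\xi\in V_{(\beta\gamma^k,B,\beta\gamma^k)}$ is immediate from $B\in\xi_{|\beta\gamma^k|}$ and Remark~\ref{remark.when.in.xialpha}. For $V_{(\beta\gamma^k,B,\beta\gamma^k)}\scj U$, take any $\eta$ containing $(\beta\gamma^k,B,\beta\gamma^k)$: upward closure and $B\scj A'$ give $e\in\eta$; and for $i$ with $\delta_i$ a beginning of $\beta\gamma^k$, if $e_i$ were in $\eta$ then $r(A_i,\nu_i)\in\eta_{|\beta\gamma^k|}$, which with $B\in\eta_{|\beta\gamma^k|}$ and $B\cap r(A_i,\nu_i)=\emptyset$ would put $\emptyset$ in the filter $\eta_{|\beta\gamma^k|}$, a contradiction. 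I expect the main obstacle to be exactly this exclusion step: the tempting approach is to complement $A_i$ inside $\acfrg{\delta_i}$, but $\xi_{|\delta_i|}$ need not be an ultrafilter (for instance $\xi_0$ can be empty), so the decisive idea is to transport the test to level $|\beta\gamma^k|$—where the infinite-type hypothesis guarantees an ultrafilter—via the range identity and to take the complement there. The disjointness of $D_i$ from $r(A_i,\nu_i)$ needs only that they are relative complements, and the weakly left-resolving hypothesis is what makes relative ranges interact correctly with the Boolean operations used above.
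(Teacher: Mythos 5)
Your proof is correct and takes essentially the same route as the paper's: both reduce to a basic set $V_{e:e_1,\ldots,e_n}$, choose $k>1$ with $|\beta\gamma^k|$ exceeding all the word lengths involved, split the excluded generators into beginnings of $\beta\gamma^k$ versus incomparable words, and use the same set $B$ (your $A'\cap\bigcap_i D_i$ is exactly the paper's $r(A,\delta)\setminus\bigcup_{i=1}^m r(A_i,\delta_i)$), with the ultrafilter property of $\xi_{|\beta\gamma^k|}$ doing the decisive work. The only difference is expository: you spell out the complete-family iterations and the exclusion step that the paper compresses into one sentence.
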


\begin{proof}
We may assume that $U$ is of the form $V_{e:e_1,\ldots,e_n}$ for some $e=(\alpha,A,\alpha),e_1=(\alpha_1,A_1,\alpha_1),\ldots,e_n=(\alpha_n,A_n,\alpha_n)\in E(S)$. Let $k\in\nn$ be such that $k>1$ and $|\beta\gamma^k|\geq\max\{|\alpha|,|\alpha_1|,\ldots,|\alpha_n|\}$. By rearranging the indices, if necessary, we assume that for some $0\leq m\leq n$, we have that $\alpha_1,\ldots,\alpha_m$ are beginnings of $\beta\gamma^k$ and $\alpha_{m+1},\ldots,\alpha_{n}$ are not comparable with $\beta\gamma^k$. For $i=1,\ldots,m$, let $\delta_i$ be such that $\alpha_i\delta_i=\beta\gamma^k$, and let $\delta$ be such that $\alpha\delta=\beta\gamma^k$. Define $B=r(A,\delta)\setminus\left(\bigcup_{i=1}^m r(A_i,\delta_i)\right)$. Using Remark~\ref{remark.when.in.xialpha}, the order in $E(S)$ and that $\xi_{|\beta\gamma^k|}$ is an ultrafilter in a Boolean algebra, we see that $B\in\xi_{|\beta\gamma^k|}$, and in particular $B\neq\emptyset$. Moreover, since the $\alpha_{m+1},\ldots,\alpha_n$'s are not comparable with $\beta\gamma^k$, we have that 
\[\xi\in V_{(\beta\gamma^k,B,\beta\gamma^k)}\scj V_{e:e_1,\ldots,e_n}.\]
\end{proof}

\begin{lemma}\label{lem:InteriorMustBeCycle}
Let $(t,\xi) \in \text{Iso}(\F\ltimes_\varphi\ftight)$ with $t=\beta\gamma\beta^{-1}$ or $t=\beta\gamma^{-1}\beta^{-1}$, where $\beta\gamma^{\infty}$ is the labelled path associated with $\xi$.  Suppose that for every $C\in\xi_{|\beta|}$ one of the following two conditions holds:
\begin{enumerate}[(i)]
    \item $(\gamma,C)$ is a cycle with an exit,
    \item $(\gamma,C)$ is not a cycle,
\end{enumerate}
then $\xi\notin \text{Iso}(\F\ltimes_\varphi\ftight)^0$.
\end{lemma}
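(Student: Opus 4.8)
The plan is to argue by contradiction. Recall that in the transformation groupoid the basic open neighborhoods of $(t,\xi)$ are the sets $\{t\}\times W$ with $W\scj V_t$ open and $\xi\in W$, and that $(t,\xi)$ lies in the isotropy precisely when $\varphi_t(\xi)=\xi$. Thus, if we suppose that $(t,\xi)\in\text{Iso}(\F\ltimes_\varphi\ftight)^0$, then there is an open set $W$ with $\xi\in W\scj V_t$ and $\varphi_t(\eta)=\eta$ for every $\eta\in W$. First I would apply Lemma~\ref{lem:ExistCycleNeighb} to find $k>1$ and $B\in\xi_{|\beta\gamma^k|}$ with $\xi\in V_{(\beta\gamma^k,B,\beta\gamma^k)}\scj W$. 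Since this basic neighborhood sits inside $W\scj V_t$, every $\eta\in V_{(\beta\gamma^k,B,\beta\gamma^k)}$ satisfies $(t,\eta)\in\text{Iso}(\F\ltimes_\varphi\ftight)$, so by Remark~\ref{rmk:iso} (together with the fact that $t=\beta\gamma\beta^{-1}$ or $t=\beta\gamma^{-1}\beta^{-1}$ is fixed) the labelled path associated with every such $\eta$ must be exactly $\beta\gamma^{\infty}$. The goal is then to contradict this rigidity.

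Next I would produce a candidate cycle from the data. Put $C=B\cap r(\beta)$, which belongs to $\xi_{|\beta|}$ by Lemma~\ref{l:isogoback}. Since $B\in\acfrg{\beta\gamma^k}$ we have $B\scj r(\beta\gamma^k)$, and because any path labelled $\beta\gamma^k$ ends in a segment labelled $\gamma$ we get $r(\beta\gamma^k)\scj r(\gamma)$; hence $C\scj B\scj r(\gamma)$, so $(\gamma,C)$ is a legitimate candidate for a cycle, with $C\in\acfrg{\gamma}\cap\acfrg{\beta}$. By the hypothesis applied to this $C\in\xi_{|\beta|}$, either $(\gamma,C)$ is not a cycle or $(\gamma,C)$ is a cycle with an exit.

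The technical heart (and the main obstacle) is to turn each of these two alternatives into a tight filter $\eta$ that contains $(\beta\gamma^k,B,\beta\gamma^k)$ but whose associated labelled path is \emph{not} $\beta\gamma^{\infty}$, thereby contradicting the previous paragraph. If $(\gamma,C)$ has an exit, say $\emptyset\neq D\scj r(C,\gamma_{1,j})$ with $\lbf(D\dgraph^1)\neq\{\gamma_{j+1}\}$, I would run the filter forward along copies of $\gamma$ (keeping it inside $B$, which stays nonempty by Lemma~\ref{l:isointer}) until its range meets $D$, and then either branch along a label different from $\gamma_{j+1}$ or stop at a sink contained in $D$; this yields a complete family for a word that is eventually non-periodic or finite, whose tightness is checked against the two cases of Theorem~\ref{thm:TightFiltersType}. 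If instead $(\gamma,C)$ is not a cycle, there is $\emptyset\neq B'\scj C$ in $\acfrg{\gamma}$ with $r(B',\gamma)\neq B'$, and I would use this failure of the return condition to steer a complete family off the periodic pattern in the same manner. In either case the resulting $\eta$ lies in $V_{(\beta\gamma^k,B,\beta\gamma^k)}$ yet has a labelled path distinct from $\beta\gamma^{\infty}$, which is the desired contradiction; consequently $(t,\xi)\notin\text{Iso}(\F\ltimes_\varphi\ftight)^0$. The delicate point throughout is the explicit construction of these filters — guaranteeing that the relative ranges remain nonempty at every stage, that the extended family satisfies the completeness relation of Section~\ref{subsection:filters.E(S)}, and that the endpoint is genuinely tight via Theorem~\ref{thm:TightFiltersType} — rather than the surrounding bookkeeping.
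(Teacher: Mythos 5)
Your setup coincides with the paper's: reduce to a basic neighborhood $V_{(\beta\gamma^k,B,\beta\gamma^k)}$ via Lemma~\ref{lem:ExistCycleNeighb}, pass to $C=B\cap r(\beta)\in\xi_{|\beta|}$ via Lemma~\ref{l:isogoback}, and split according to the hypothesis. The genuine gap is in the case where $(\gamma,C)$ is \emph{not a cycle}: there you claim to produce a tight filter $\eta\in V_{(\beta\gamma^k,B,\beta\gamma^k)}$ whose labelled path differs from $\beta\gamma^{\infty}$, and such a filter need not exist. Concretely, take $\dgraph^0=\nn\cup\{\infty\}$ with one edge from $n$ to $n+1$ for each $n\in\nn$ and a loop at $\infty$, all edges labelled by a single letter $a$, and $\acf$ the compact-open subsets of $\nn\cup\{\infty\}$ (a variant of the example following Proposition~\ref{p:isobasis}; it is a normal labelled space). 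Since there are no sinks and only one letter, Theorem~\ref{thm:TightFiltersType} forces every tight filter to be of infinite type with word $a^{\infty}$. Let $\xi$ be the tight filter whose complete family consists of the ultrafilters determined by $\infty$, and let $t=a$ (so $\beta=\eword$, $\gamma=a$). Then $(t,\xi)\in\text{Iso}(\F\ltimes_\varphi\ftight)$, the hypothesis of the lemma holds in alternative (ii) (every $C\in\xi_0$ contains some $n\in\nn$ and $r(\{n\},a)=\{n+1\}\neq\{n\}$), and the conclusion holds because the filters $\eta$ determined by points $n\in\nn$ accumulate at $\xi$ and satisfy $\varphi_{a^{-1}}(\eta)\neq\eta$. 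But \emph{every} tight filter of this labelled space has word $a^{\infty}=\beta\gamma^{\infty}$, so the filter your plan requires simply does not exist. The conceptual point is that Remark~\ref{rmk:iso} gives only a \emph{necessary} condition for isotropy: a filter can have the correct word and still be moved by $\varphi_{t^{-1}}$, because its complete family shifts. In the non-cycle case this is the only failure mode available, so a word-based contradiction cannot close it. The paper argues at the level of the filter instead: choosing $B'\scj C$ with $r(B',\gamma)\neq B'$, one of the sets $A=B'\setminus r(B',\gamma)$ or $A=r(B',\gamma)\setminus B'$ is nonempty; taking $f=(\beta\gamma^k,A,\beta\gamma^k)$ (resp.\ $f=(\beta\gamma^{k+1},A,\beta\gamma^{k+1})$), any tight filter $\eta$ containing $f$ (which exists by Remark~\ref{rmk:V_e.not.empty}) lies in $V_{(\beta\gamma^k,B,\beta\gamma^k)}$, yet $A\in\eta_{|\beta\gamma^k|}$ (resp.\ $\eta_{|\beta\gamma^{k+1}|}$) satisfies $A\cap r(A,\gamma)=\emptyset$, so Lemma~\ref{l:isointer} rules out $(t,\eta)\in\text{Iso}(\F\ltimes_\varphi\ftight)$ regardless of the word of $\eta$.

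In the exit case your outline does match the paper's mechanism (force the word off $\beta\gamma^{\infty}$), but what you defer as the ``technical heart'' is the entire content, and it is unnecessary to build complete families and verify tightness by hand: by Remark~\ref{rmk:V_e.not.empty}, every nonzero idempotent of $E(S)$ lies in some tight filter, so it suffices to exhibit $f=(\beta\gamma^k\gamma_{1,l},D,\beta\gamma^k\gamma_{1,l})$ when $\lbf(D\dgraph^1)=\emptyset$, or $f=(\beta\gamma^k\gamma_{1,l}a,r(D,a),\beta\gamma^k\gamma_{1,l}a)$ when some $a\in\lbf(D\dgraph^1)$ with $a\neq\gamma_{l+1}$; since $f\leq(\beta\gamma^k,B,\beta\gamma^k)$, any tight filter containing $f$ lies in the neighborhood, and its word (finite, or branching off at $a$) is incompatible with $\beta\gamma^{\infty}$, so Remark~\ref{rmk:iso} finishes that case. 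So: the exit half of your plan is completable along these lines; the non-cycle half, as formulated, would fail and needs Lemma~\ref{l:isointer} in place of the word argument.
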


\begin{proof}
In order to prove that $\xi\notin \text{Iso}(\F\ltimes_\varphi\ftight)^0$, it is sufficient to show that if $U$ is an open neighborhood of $\xi$ such that $U\scj V_t$, then there exists $\eta\in U$ such that $(t,\eta)\notin \text{Iso}(\F\ltimes_\varphi\ftight)$. Also, we may assume that $U$ is of the form $V_{e:e_1,\ldots,e_n}$ for some $e=(\alpha,A,\alpha),e_1=(\alpha_1,A_1,\alpha_1),\ldots,e_n=(\alpha_n,A_n,\alpha_n)\in E(S)$. 
By Lemma~\ref{lem:ExistCycleNeighb}, there is a $k>1$ and a $B\in\xi_{|\beta\gamma^k|}$ such that 
\[\xi\in V_{(\beta\gamma^k,B,\beta\gamma^k)}\scj V_{e:e_1,\ldots,e_n}.\]

Also by Lemma~\ref{l:isogoback}, $C=B\cap r(\beta)\in\xi_{|\beta|}$. Suppose first that $(\gamma,C)$ is a cycle so that $(\beta\gamma^k,C,\beta\gamma^k)\in E(S)$. By assumption $(\gamma,C)$ it has an exit, that is, there exists $0\leq l \leq|\gamma|$ and $\emptyset\neq D\scj r(C,\gamma_{1,l})$ such that $\lbf(D\dgraph^1)\neq \{\gamma_{|l+1|}\}$. If $\lbf(D\dgraph^1)=\emptyset$, we consider $f=(\beta\gamma^k\gamma_{1,l},D,\beta\gamma^k\gamma_{1,l})$, and if $a\in \lbf(D\dgraph^1)\neq \{\gamma_{|l+1|}\}$, we let $f=(\beta\gamma^k\gamma_{1,l}a,r(D,a),\beta\gamma^k\gamma_{1,l}a)$. In either case, if we take $\eta$ such that $f\in\eta$ (see Remark~\ref{rmk:V_e.not.empty} for the existence of such $\eta$), then $\eta\in V_{(\beta\gamma^k,B,\beta\gamma^k)}$, but $(t,\eta)\notin \text{Iso}(\F\ltimes_\varphi\ftight)$, because the associated word of $\eta$ is either $\beta\gamma^k\gamma_{1,l}$ in the first case, or starts with $\beta\gamma^k\gamma_{1,l}a$ in the second case.

Suppose now that $(\gamma,C)$ is not a cycle, then neither is $(\gamma,B)$. Then, either $B\setminus r(B,\gamma)\neq\emptyset$ or $r(B,\gamma)\setminus B\neq\emptyset$. In the first case, if we let $f=(\beta\gamma^k,B\setminus r(B,\gamma),\beta\gamma^k)$ and $\eta$ such that $f\in \eta$, we have that $(t,\eta)\notin \text{Iso}(\F\ltimes_\varphi\ftight)$, because of Lemma~\ref{l:isointer} and of the equality
\[(B\setminus r(B,\gamma))\cap r(B\setminus r(B,\gamma),\gamma)=(B\setminus r(B,\gamma))\cap (r(B,\gamma)\setminus r(B,\gamma^2))=\emptyset.\]
The second case is analogous with $f=(\beta\gamma^{k+1},r(B,\gamma)\setminus B,\beta\gamma^{k+1})$.
\end{proof}

\begin{proposition}\label{p:isobasis}
The collection of sets of the form $\{t\}\times V_{(\beta,C,\beta)}$, where $t=\beta\gamma\beta^{-1}$ or $t=\beta\gamma^{-1}\beta^{-1}$ for some cycle $(\gamma,C)$ without exists  and such that $\beta\gamma\in\awstar$, together with the empty set, forms a basis of compact-open bisections for $\text{Iso}(\F\ltimes_\varphi\ftight)^0 \setminus (\F\ltimes_\varphi\ftight)^{(0)}$. Moreover, this basis is closed under finite intersections.
\end{proposition}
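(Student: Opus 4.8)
The plan is to verify three things: that each set $\{t\}\times V_{(\beta,C,\beta)}$ of the indicated form is a compact-open bisection contained in $\text{Iso}(\F\ltimes_\varphi\ftight)^0\setminus(\F\ltimes_\varphi\ftight)^{(0)}$; that these sets form a basis for the subspace topology; and that the family is closed under finite intersections. Since $\gamma\in\awplus$ we have $t\neq\eword$, so every element of such a set has nontrivial group coordinate and therefore lies off the unit space. A small fact used repeatedly is that if $(\gamma,C_0)$ is a cycle without exits and $\emptyset\neq C\scj C_0$ with $C\in\acf$, then $(\gamma,C)$ is again a cycle without exits: the cycle condition $r(C,\gamma)=C$ and the defining property pass to subsets because $r(\cdot,\alpha)$ is monotone, and any exit of $(\gamma,C)$ would, via $r(C,\gamma_{1,l})\scj r(C_0,\gamma_{1,l})$, be an exit of $(\gamma,C_0)$.

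For the first point, fix such a set with $t=\beta\gamma\beta^{-1}$ (the case $t=\beta\gamma^{-1}\beta^{-1}$ is identical). Since $C\scj r(\beta)$ and $(\gamma,C)$ is a cycle without exits, Lemma~\ref{lem:NeighbEqualAllN} gives $V_{(\beta,C,\beta)}=V_{(\beta\gamma,C,\beta\gamma)}$, and Lemma~\ref{l:intiso} then yields $\{t\}\times V_{(\beta,C,\beta)}\scj\text{Iso}(\F\ltimes_\varphi\ftight)^0$. Each $V_{(\beta,C,\beta)}=V_e$ is a basic compact-open subset of $\ftight$, so $\{t\}\times V_{(\beta,C,\beta)}$ is compact-open; being contained in the isotropy, the source and range maps both restrict to the homeomorphism $(t,\xi)\mapsto\xi$ onto $V_{(\beta,C,\beta)}$, so the set is a bisection. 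Together with $t\neq\eword$, this places it inside $\text{Iso}(\F\ltimes_\varphi\ftight)^0\setminus(\F\ltimes_\varphi\ftight)^{(0)}$.

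For the basis property, take $(t,\xi)$ in the space together with a groupoid-open neighborhood $O$ with $(t,\xi)\in O\scj\text{Iso}(\F\ltimes_\varphi\ftight)^0$ and $t\neq\eword$; shrinking, we may assume $O=\{t\}\times V_{e:e_1,\ldots,e_n}$. By Remark~\ref{rmk:iso}, $t=\beta\gamma\beta^{-1}$ or $t=\beta\gamma^{-1}\beta^{-1}$ with associated word $\beta\gamma^{\infty}$. Applying the contrapositive of Lemma~\ref{lem:InteriorMustBeCycle} (valid since $(t,\xi)\in\text{Iso}(\F\ltimes_\varphi\ftight)^0$) produces $C_0\in\xi_{|\beta|}$ for which $(\gamma,C_0)$ is a cycle without exits. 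Lemma~\ref{lem:ExistCycleNeighb} provides $k>1$ and $B\in\xi_{|\beta\gamma^k|}$ with $\xi\in V_{(\beta\gamma^k,B,\beta\gamma^k)}\scj V_{e:e_1,\ldots,e_n}$. Since $(\gamma,C_0)$ is a cycle, $C_0=r(C_0,\gamma^k)\scj r(\beta\gamma^k)$, and Lemma~\ref{lem:NeighbEqualAllN} together with Remark~\ref{remark.when.in.xialpha} gives $C_0\in\xi_{|\beta\gamma^k|}$; hence $C:=B\cap C_0\in\xi_{|\beta\gamma^k|}$ is nonempty and, being contained in $C_0$, yields a cycle $(\gamma,C)$ without exits by the fact above. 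Writing $\beta'=\beta\gamma^k$, so that $t=\beta'\gamma(\beta')^{-1}$ (resp.\ $\beta'\gamma^{-1}(\beta')^{-1}$), the set $\{t\}\times V_{(\beta',C,\beta')}$ is of the required form and satisfies $(t,\xi)\in\{t\}\times V_{(\beta',C,\beta')}\scj\{t\}\times V_{(\beta\gamma^k,B,\beta\gamma^k)}\scj O$, using $C\scj B$. This is the crux of the argument and the step I expect to be most delicate, since it requires matching the representation $t=\beta'\gamma(\beta')^{-1}$, transporting $C_0$ to level $|\beta\gamma^k|$, and checking that $B\cap C_0$ still carves out a cycle without exits.

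Finally, for closure under intersections, consider two members with data $(t,\beta,\gamma,C)$ and $(t',\beta',\gamma',C')$. If $t\neq t'$ the intersection is empty. If $t=t'$, it equals $\{t\}\times\bigl(V_{(\beta,C,\beta)}\cap V_{(\beta',C',\beta')}\bigr)$, and since filters are upward closed and closed under the semilattice operation we have $V_e\cap V_f=V_{ef}$, so it suffices to compute the product of the two idempotents in $E(S)$. A filter lying in the intersection must have both $\beta$ and $\beta'$ as beginnings of its word (Remark~\ref{remark.when.in.xialpha}), so $\beta,\beta'$ are comparable; otherwise the product is $0$ and the intersection empty. Assuming $\beta'=\beta\delta$, the $E(S)$-product is $(\beta',r(C,\delta)\cap C',\beta')$, so with $C'':=r(C,\delta)\cap C'\scj C'$ we obtain $V_{(\beta,C,\beta)}\cap V_{(\beta',C',\beta')}=V_{(\beta',C'',\beta')}$. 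By the subset fact, $(\gamma',C'')$ is a cycle without exits, and $t=t'=\beta'\gamma'(\beta')^{-1}$ (resp.\ the inverse type) is precisely the representation attached to the longer word $\beta'$; hence $\{t\}\times V_{(\beta',C'',\beta')}$ is again a member of the family. The symmetric case $\beta=\beta'\delta'$ uses the first set's data $(\gamma,C)$ analogously. This shows the family is closed under finite intersections and completes the proof.
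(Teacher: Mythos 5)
Your proposal is correct and follows essentially the same route as the paper's proof: the same key lemmas (Remark~\ref{rmk:iso}, Lemmas~\ref{lem:InteriorMustBeCycle}, \ref{lem:ExistCycleNeighb}, \ref{lem:NeighbEqualAllN} and \ref{l:intiso}) drive the same three-step argument of checking the sets are compact-open bisections off the unit space, extracting a basic set inside an arbitrary neighborhood, and computing pairwise intersections via comparability of the words and the fact that subsets of exitless cycles are exitless cycles. The only differences are bookkeeping: you anchor the basic set at $\beta\gamma^k$ (using $t=\beta\gamma^k\gamma(\beta\gamma^k)^{-1}$ in $\F$) where the paper transports the cycle set back to level $\beta$ via Lemma~\ref{l:isogoback}, and you shrink to basic product-topology neighborhoods $\{t\}\times V_{e:e_1,\ldots,e_n}$ where the paper works with the range $r(U)$ of a compact-open bisection $U$.
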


\begin{proof}
We start by observing that if $(\gamma,C)$ is a cycle, then $\gamma\in\awplus$, so that for every  $\beta\in \awstar$, we have that $\beta\gamma\beta^{-1}\neq\eword$ and $\beta\gamma^{-1}\beta^{-1}\neq\eword$ in $\F$. 

Suppose $(t,\xi)\in \text{Iso}(\F\ltimes_\varphi\ftight)^0 \setminus (\F\ltimes_\varphi\ftight)^{(0)}$. Then, by Remark~\ref{rmk:iso} and Lemma~\ref{lem:InteriorMustBeCycle}, $ t=\beta\gamma\beta^{-1}$ or $t=\beta\gamma^{-1}\beta^{-1}$ and there exists $C\in \xi_{|\beta|}$ such that $(\gamma, C)$ is a cycle without exists. 

Let $U\scj \F\ltimes_\varphi\ftight$ be a compact-open bisection containing $(t,\xi)$. Since $\F\ltimes_\varphi\ftight$ endowed with the product topology is a regular space and $(\F\ltimes_\varphi\ftight)^{(0)}$ is closed, we may assume that $U\scj \text{Iso}(\F\ltimes_\varphi\ftight)^0 \setminus (\F\ltimes_\varphi\ftight)^{(0)}$. Let $V=r(U)\scj \ftight$. Then $V$ is a compact-open neighborhood of $\xi$ in $\ftight$. By Lemma~\ref{lem:ExistCycleNeighb} there there is a $k\in\N$ and $B\in\xi_{|\beta\gamma^k|}$ such that 
\begin{equation*}
    \xi\in V_{(\beta\gamma^k,B,\beta\gamma^k)}\scj V.
\end{equation*}

Since $(\gamma, C)$ is a cycle without exits, it follows from Lemma~\ref{lem:NeighbEqualAllN}, that $V_{(\beta,C,\beta)}=V_{(\beta\gamma^m, C, \beta\gamma^m)}$ for every $m\in \N$. 
Now, since $\xi_{|\beta\gamma^k|}$ is a filter, we have that $\emptyset\neq B\cap C \in \xi_{\beta\gamma^k}$, and thus, by Lemma~\ref{l:isogoback}, we have that $(B\cap C) \cap r(\beta)\in \xi_{|\beta|}$. Therefore, $(\gamma, B\cap C)$ is a cycle without exits such that 
\begin{equation}\label{eq:IsoIntBasis}
    \xi\in V_{(\beta\gamma^k,B\cap C,\beta\gamma^k)}\scj V_{e:e_1,\ldots,e_n}.
\end{equation}
In fact, (\ref{eq:IsoIntBasis}) holds for all $k\in \N^*$, by Lemma~\ref{lem:NeighbEqualAllN}. 
Thus, $\{t\}\times V_{(\beta,C,\beta)}$ is a compact-open neighborhood of $(t,\xi)$ such that $\{t\}\times V_{(\beta,C,\beta)} \scj U$. This proves that sets of the form $\{t\}\times V_{(\beta,C,\beta)}$ which have the properties described in the statement form a basis of compact-open sets for $\text{Iso}(\F\ltimes_\varphi\ftight)^0 \setminus (\F\ltimes_\varphi\ftight)^{(0)}$. 

Next we show that this basis is closed under finite intersections. Suppose $\{s\}\times V_{(\alpha,B,\alpha)}$ and $\{t\}\times V_{(\beta,C,\beta)}$ are basic compact-open sets in $\text{Iso}(\F\ltimes_\varphi\ftight)^0 \setminus (\F\ltimes_\varphi\ftight)^{(0)}$ with $(\delta, B)$ and $(\gamma,C)$ cycles without exits. If $(\{s\}\times V_{(\alpha,B,\alpha)}) \cap (\{t\}\times V_{(\beta,C,\beta)}) = \emptyset$, then we are done. Suppose now that 
$(\{s\}\times V_{(\alpha,B,\alpha)}) \cap (\{t\}\times V_{(\beta,C,\beta)}) \neq \emptyset$. Then $s=t$ and $V_{(\alpha,B,\alpha)}\cap V_{(\beta,C,\beta)} \neq \emptyset$. Since $(\delta, B)$ and $(\gamma,C)$ are cycles without exits, Lemma~\ref{lem:NeighbEqualAllN} implies that  
\[\emptyset\neq V_{(\alpha,B,\alpha)}\cap V_{(\beta,C,\beta)}= V_{(\alpha\delta^m,B,\alpha\delta^m)}\cap V_{(\beta\gamma^n,C,\beta\gamma^n)}\]
for all $m,n\in \N^*$, which in turn implies that $\alpha\delta^\infty=\beta\gamma^\infty$. 
In particular, $\alpha$ is a beginning of $\beta$ or vice-versa. Assume the former, that is, $\beta=\alpha\beta'$ for some $\beta'\in\awstar$. Then
\[(\{t\}\times V_{(\alpha,B,\alpha)})\cap (\{t\}\times V_{(\beta,C,\beta)})=\{t\}\times V_{(\beta,r(B,\beta')\cap C,\beta)}.\]
Now it follows from Lemma \ref{l:intiso} that $\{t\}\times V_{(\beta,r(B,\beta')\cap C,\beta)}$ is a basic compact-open neighborhood in $\text{Iso}(\F\ltimes_\varphi\ftight)^0 \setminus (\F\ltimes_\varphi\ftight)^{(0)}$ because $r(B,\beta')\cap C\scj C$ and therefore $(r(B,\beta')\cap C,\gamma)$ is a cycle without exits.

Finally, because $\F\ltimes_\varphi\ftight$ is ample there is no harm in assuming that $U$ in the second paragraph of this proof is a bisection as well. Then (\ref{eq:IsoIntBasis}) implies that the basis for $\text{Iso}(\F\ltimes_\varphi\ftight)^0 \setminus (\F\ltimes_\varphi\ftight)^{(0)}$ consists of compact-open bisections.  
\end{proof}

For ultragraphs, it is proved in \cite{GDD2} that if $(t,\xi)\in \text{Iso}(\F\ltimes_\varphi\ftight)^{0}$ and $t\neq\eword$, then $(t,\xi)$ is an isolated point. That is, for ultragraph (and consequently, graph) groupoids, the elements that are in the interior of the isotropy and which are not in the unit space are isolated points. This needs to be the case for labelled spaces, as illustrated in the next example.

\begin{example}
Let $\dgraph^0=\nn\cup\{\infty\}$, $\dgraph^1=\{e_x\mid x\in\dgraph^0\}$ with $s(e_x)=r(e_x)=x$ for all $x\in\dgraph^0$, $\alf=\{a\}$ and $\lbf:\dgraph^1\to\alf$ the only possible labeling function. For $\acf$ we take all finite subsets of $\nn$ and sets of the form $A\cup\{\infty\}$ where $A$ is a cofinite subset of $\nn$, that is, $\acf$ is the set of compact-open subsets of the one-point compactification of $\nn$. We have that $\awstar=\{a^n\mid n\in\nn\}$ and $\awinf=\{a^{\infty}\}$. Because $\dgraph$ has no sinks and $\alf$ is finite, $\ftight$ consists only of filters of infinite type, by Theorem~\ref{thm:TightFiltersType}. Moreover $\acfrg{a^n}=\acf$ for all $n\in\nn$. By Stone's duality the ultrafilters of $\acf$ are of the form $\ft_x=\{A\in\acf\mid x\in A\}$ for some $x\in\dgraph^0$. Also notice that for all $n\in\nn$ and all $A\in\acf$, we have that $(a^n,A)$ is a cycle without exits. This implies that if $\xi\in\ftight$ then its associated words is $a^\infty$ and there exists $x\in\dgraph^0$ such that $\xi_n=\ft_x$ for all $n\in\nn$. Also, the map that sends $\xi$ to such $x$ is a homeomorphism between $\ftight$ and $\nn\cup\{\infty\}$. As for the partial action, since there is a single letter, $\F=\mathbb{Z}$ and by the description of $\ftight$, we see that the partial action is actually the trivial action. This implies that $\F\ltimes_\varphi\ftight=\text{Iso}(\F\ltimes_\varphi\ftight)=\text{Iso}(\F\ltimes_\varphi\ftight)^{0}\cong \mathbb{Z}\times(\nn\cup\{\infty\})$ with the product topology. In this case, for every $t\in\mathbb{Z}$, the point $(t,\infty)$ is not isolated.
\end{example}

\begin{definition}(cf. \cite[Proposition-Definition~3.1]{saragab})
    We define the \emph{abelian core}, denoted by $M(L_R\lspace)$, as the $R$-subalgebra of $L_R\lspace$ spanned by elements of the form
    \[ s_\alpha p_A s_{\alpha}^*,\ s_\alpha p_C s_\gamma^* s_{\alpha}^*\text{ and }s_\alpha s_\gamma p_C s_{\alpha}^*,
    \]
    where $\alpha\in \awstar$, $A\in\acfra$ and $(\gamma,C)$ is a cycle without exits.
\end{definition}

Notice that $D(L_R\lspace)\scj M(L_R\lspace)$, because $M(L_R\lspace)$ contains the generators of $D(L_R\lspace)$.

\begin{proposition}\label{p:abcore}
Let $\Psi: L_R\lspace \to  A_R(\mathbb{F} \ltimes_{\varphi} \ftight)$ denote the isomorphism as in Theorem~\ref{thm:LeavittSteinbergIsom}. Then $\Psi(M(L_R\lspace)) =A_R(\text{Iso}(\F\ltimes_\varphi\ftight)^0)$.
\end{proposition}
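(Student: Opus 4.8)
The plan is to prove the two inclusions separately, in both cases reducing everything to the explicit formula \eqref{eq:IsomOnTriples} for $\Psi$ on elements $s_\alpha p_A s_\beta^*$. First I would compute the image under $\Psi$ of each of the three families of generators of $M(L_R\lspace)$. Writing $s_\gamma^* s_\alpha^* = s_{\alpha\gamma}^*$ and $s_\alpha s_\gamma = s_{\alpha\gamma}$, the generators $s_\alpha p_C s_\gamma^* s_\alpha^*$ and $s_\alpha s_\gamma p_C s_\alpha^*$ become $s_\alpha p_C s_{\alpha\gamma}^*$ and $s_{\alpha\gamma} p_C s_\alpha^*$. For a cycle $(\gamma,C)$ without exits I may, by Proposition~\ref{prop:properties.of.c*-labelled.space}(v), assume $C\scj r(\alpha)\cap r(\alpha\gamma)$ (replacing $C$ by $C\cap r(\alpha)\cap r(\alpha\gamma)$, which remains a cycle without exits since it is contained in $C$, or else the generator is zero). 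Then \eqref{eq:IsomOnTriples} gives $\Psi(s_\alpha p_A s_\alpha^*)=1_{\{\eword\}\times V_{(\alpha,A,\alpha)}}$, $\Psi(s_\alpha p_C s_\gamma^* s_\alpha^*)=1_{\{\alpha\gamma^{-1}\alpha^{-1}\}\times V_{(\alpha,C,\alpha)}}$, and $\Psi(s_\alpha s_\gamma p_C s_\alpha^*)=1_{\{\alpha\gamma\alpha^{-1}\}\times V_{(\alpha,C,\alpha)}}$.

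For the inclusion $\Psi(M(L_R\lspace))\scj A_R(\text{Iso}(\F\ltimes_\varphi\ftight)^0)$, the first image is supported on the unit space, hence lies in $A_R(\text{Iso}(\F\ltimes_\varphi\ftight)^0)$. For the other two, Lemma~\ref{lem:NeighbEqualAllN} gives $V_{(\alpha,C,\alpha)}=V_{(\alpha\gamma,C,\alpha\gamma)}$, and Lemma~\ref{l:intiso} (with $\beta=\alpha$) then shows the supporting bisections lie in $\text{Iso}(\F\ltimes_\varphi\ftight)^0$. Since $\Psi$ is $R$-linear and these elements span $M(L_R\lspace)$, the inclusion follows.

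For the reverse inclusion I would use that $A_R(\text{Iso}(\F\ltimes_\varphi\ftight)^0)$ is spanned by $1_U$ for compact-open bisections $U$, and split each such $U$. As $\{\eword\}\times\ftight$ is clopen in $\F\times\ftight$, the unit space $(\F\ltimes_\varphi\ftight)^{(0)}$ is clopen in the groupoid, so $U=U_0\sqcup U_1$ with $U_0=U\cap(\F\ltimes_\varphi\ftight)^{(0)}$ and $U_1=U\setminus(\F\ltimes_\varphi\ftight)^{(0)}$ both compact-open bisections and $1_U=1_{U_0}+1_{U_1}$. The term $1_{U_0}$ is the characteristic function of a compact-open subset of the unit space; since the images $\Psi(s_\alpha p_A s_\alpha^*)=1_{\{\eword\}\times V_{(\alpha,A,\alpha)}}$ together with Lemma~\ref{lem:algebra.sets} span $A_R((\F\ltimes_\varphi\ftight)^{(0)})$ (cf.\ Proposition~\ref{prop:diag}), it lies in $\Psi(D(L_R\lspace))\scj\Psi(M(L_R\lspace))$. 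For $1_{U_1}$, Proposition~\ref{p:isobasis} shows $U_1$ is a finite union of basic bisections $\{t\}\times V_{(\beta,C,\beta)}$; as that basis is closed under finite intersections, inclusion--exclusion expresses $1_{U_1}$ as an $R$-linear combination of such basic characteristic functions. By the generator computation above (and Lemma~\ref{lem:NeighbEqualAllN}), each $1_{\{\beta\gamma\beta^{-1}\}\times V_{(\beta,C,\beta)}}=\Psi(s_\beta s_\gamma p_C s_\beta^*)$ and each $1_{\{\beta\gamma^{-1}\beta^{-1}\}\times V_{(\beta,C,\beta)}}=\Psi(s_\beta p_C s_\gamma^* s_\beta^*)$ lies in $\Psi(M(L_R\lspace))$. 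Hence $1_U\in\Psi(M(L_R\lspace))$, giving equality.

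The main obstacle I anticipate is the bookkeeping in the reverse inclusion: one must justify the clopenness of the unit space so that the splitting $U=U_0\sqcup U_1$ is legitimate, and then reduce $1_{U_1}$ to the basic bisections of Proposition~\ref{p:isobasis} via inclusion--exclusion, relying crucially on that basis being closed under finite intersections. Verifying that the three generator-image formulas land exactly on this basis --- in particular the identification $V_{(\alpha,C,\alpha)}=V_{(\alpha\gamma,C,\alpha\gamma)}$ for cycles without exits --- is the linchpin that makes both inclusions match up.
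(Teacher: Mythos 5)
Your proof is correct and follows essentially the same route as the paper: both directions rest on Equation~\eqref{eq:IsomOnTriples} for the generator images, Lemma~\ref{l:intiso} together with Lemma~\ref{lem:NeighbEqualAllN} for the forward inclusion, and Proposition~\ref{p:isobasis} plus inclusion--exclusion (and Proposition~\ref{prop:diag} for the diagonal part) for the reverse inclusion. The only cosmetic difference is the initial decomposition of a compact-open bisection $U$ — you split off the unit space using its clopenness, whereas the paper fibers $U$ over the finitely many group elements $t$ meeting it and treats $t=\eword$ separately — and your explicit check that $C\cap r(\alpha)\cap r(\alpha\gamma)$ remains a cycle without exits makes precise a step the paper leaves implicit.
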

    \begin{proof}
      Since $\F\ltimes_\varphi\ftight$ is an \'etale groupoid, we have that $A_R((\F\ltimes_\varphi\ftight)^{(0)})\scj A_R(\text{Iso}(\F\ltimes_\varphi\ftight)^0)$. Therefore, it follows from Proposition~\ref{prop:diag} that $\Psi(D(L_R\lspace))\scj A_R(\text{Iso}(\F\ltimes_\varphi\ftight)^0)$. Let $\alpha\in\awstar$ and suppose $(\gamma, C)$ is a cycle without exits. Using Equation~(\ref{eq:IsomOnTriples}), we have 
     
     \begin{equation} \label{eq:CoreIsom}
     \begin{aligned} 
         \Psi(s_\alpha p_C s_\gamma^*  s_{\alpha}^*) & = 1_{\{\alpha\gamma^{-1}\alpha^{-1}\}\times V_{(\alpha, C\cap r(\alpha)\cap r(\alpha\gamma),\alpha)}} \text{ and} \\
         \Psi(s_\alpha s_\gamma p_C s_{\alpha}^*) & = 1_{\{\alpha\gamma\alpha^{-1}\}\times V_{(\alpha\gamma, C\cap r(\alpha\gamma)\cap r(\alpha),\alpha\gamma)}}.
     \end{aligned}
     \end{equation}
     Since $(\gamma,C)$ is a cycle without exits, it follows from Lemma~\ref{l:intiso} that 
     \begin{align*}
        & \{\alpha\gamma^{-1}\alpha^{-1}\}\times V_{(\alpha, C\cap r(\alpha) \cap r(\alpha\gamma),\alpha)}\scj \text{Iso}(\F\ltimes_\varphi\ftight)^0 \text{ and}  \\
        &  \{\alpha\gamma\alpha^{-1}\}\times V_{(\alpha\gamma, C\cap r(\alpha\gamma)\cap r(\alpha),\alpha\gamma)}\scj \text{Iso}(\F\ltimes_\varphi\ftight)^0.
     \end{align*}
     Hence, $\Psi(M(L_R\lspace))\scj A_R(\text{Iso}(\F\ltimes_\varphi\ftight)^0)$.
     
      We show that $A_R(\text{Iso}(\F\ltimes_\varphi\ftight)^0)\scj \Psi(M(L_R\lspace))$. Let $U\scj \text{Iso}(\F\ltimes_\varphi\ftight)^0$ be a compact-open bisection and let us prove that $1_U\in \Psi(M(L_R\lspace))$. 
      Because $U$ is compact, there exists a finite set of elements $t\in\F$ such that $(\{t\}\times V_{t})\cap U\neq \emptyset$. Let $\{t_1,\ldots,t_n\}$ be this set. Then $1_U=\sum_{i=1}^n 1_{(\{t_i\}\times V_{t_i})\cap U}$, so we may assume without loss of generality that $U\scj \{t\}\times V_{t}$ for some $t\in\F$. If $t=\eword$, then $1_U\in A_R((\F\ltimes_\varphi\ftight)^{(0)})=\Psi(D(L_R\lspace))$. Suppose now that $t\neq\eword$. In order to show that $1_U\in \Psi(M(L_R\lspace))$, by Proposition~\ref{p:isobasis} and the inclusion-exclusion principle, it is sufficient to prove this for $U=\{t\}\times V_{(\beta,C,\beta)}$, where $t=\beta\gamma\beta^{-1}$ or $t=\beta\gamma^{-1}\beta^{-1}$, for some cycle without exists $(\gamma,C)$. Note that $C\scj r(\beta) \cap r(\beta\gamma)$. Thus, by Equations~(\ref{eq:CoreIsom}) and Lemma~\ref{lem:NeighbEqualAllN} we get $1_U=\Psi(s_\beta s_\gamma p_C s^*_{\beta})$ if $t=\beta\gamma\beta^{-1}$, or $1_U=\Psi(s_\beta p_C s^*_\gamma s^*_{\beta})$ if $t=\beta\gamma^{-1}\beta^{-1}$. In both cases, $1_U\in\Psi(M(L_R\lspace))$. Thus, $\Psi$ is onto and the proof is complete.
    \end{proof}
    
\begin{corollary}
    The core subalgebra $M_R\lspace$ of $L_R\lspace$ is maximal abelian.
\end{corollary}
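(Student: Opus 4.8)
The plan is to transport everything to the groupoid $G:=\F\ltimes_\varphi\ftight$ through the isomorphism $\Psi$ of Theorem~\ref{thm:LeavittSteinbergIsom}. By Proposition~\ref{p:abcore} we have $\Psi(M(L_R\lspace))=A_R(\text{Iso}(G)^0)$, and by Proposition~\ref{prop:diag} (combined with Theorem~\ref{thm:LeavittSteinbergIsom}) we have $\Psi(D(L_R\lspace))=A_R(G^{(0)})$; since $\Psi$ is an isomorphism it therefore suffices to prove that $M_0:=A_R(\text{Iso}(G)^0)$ is maximal abelian in $A_R(G)$. I would first record the ambient facts: $G$ is ample, and being the transformation groupoid of a partial action of the discrete group $\F$ on the Hausdorff space $\ftight$ it is Hausdorff, so every element of $A_R(G)$ is locally constant with compact-open (hence clopen) support; moreover $G^{(0)}$ is open in $G$ and contained in $\text{Iso}(G)^0$, so $A_R(G^{(0)})\scj M_0$.

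First I would verify that $M_0$ is commutative. By Proposition~\ref{p:isobasis}, $\text{Iso}(G)^0$ is a bundle of groups over $\ftight$ whose fibres are generated by the rotations $\beta\gamma\beta^{-1}$ attached to cycles without exits; in particular every fibre is cyclic, hence abelian. Concretely, for two basic bisections $\{s\}\times V_{(\alpha,B,\alpha)}$ and $\{t\}\times V_{(\beta,C,\beta)}$ of $\text{Iso}(G)^0$ one computes from the convolution formula that $1_{\{s\}\times V_{(\alpha,B,\alpha)}}*1_{\{t\}\times V_{(\beta,C,\beta)}}$ is supported on the group element $st$ over $V_{(\alpha,B,\alpha)}\cap V_{(\beta,C,\beta)}$, while the opposite product is supported on $ts$ over the same set; and whenever this intersection is nonempty the analysis in the proof of Proposition~\ref{p:isobasis} forces $\alpha\delta^\infty=\beta\gamma^\infty$, so that $s$ and $t$ lie in a common cyclic isotropy group and thus $st=ts$ in $\F$. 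Hence the two products coincide and $M_0$ is abelian.

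Next I would identify the commutant of the diagonal. Let $n\in A_R(G)$ commute with every $1_B$, where $B\scj G^{(0)}$ is compact-open. Using the convolution formula $f*g(\gamma)=\sum_{s(\alpha)=s(\gamma)}f(\gamma\alpha^{-1})g(\alpha)$ one gets
\[
1_B*n(\gamma)=1_B(r(\gamma))\,n(\gamma),\qquad n*1_B(\gamma)=n(\gamma)\,1_B(s(\gamma))
\]
for all $\gamma\in G$. If $n(\gamma)\neq0$ and $r(\gamma)\neq s(\gamma)$, then by Hausdorffness of $\ftight$ and its compact-open basis one can choose $B\scj G^{(0)}$ compact-open containing $r(\gamma)$ but not $s(\gamma)$, and the two displayed expressions disagree at $\gamma$, a contradiction. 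Hence $\text{supp}(n)\scj\text{Iso}(G)$. Since $\text{supp}(n)$ is open in $G$ and $\text{Iso}(G)^0$ is by definition the largest open subset of $G$ contained in $\text{Iso}(G)$, we get $\text{supp}(n)\scj\text{Iso}(G)^0$, and because $\text{Iso}(G)^0$ is an open ample subgroupoid this means $n\in M_0$. (The same computation shows, conversely, that any function supported on $\text{Iso}(G)$ commutes with $A_R(G^{(0)})$, so in fact $A_R(G^{(0)})'=M_0$, although only the inclusion $A_R(G^{(0)})'\scj M_0$ is needed below.)

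Finally, maximal abelianness follows formally. Writing $M_0'$ for the commutant of $M_0$ in $A_R(G)$, commutativity of $M_0$ gives $M_0\scj M_0'$; and since $A_R(G^{(0)})\scj M_0$, any element commuting with $M_0$ commutes with $A_R(G^{(0)})$, so $M_0'\scj A_R(G^{(0)})'\scj M_0$. Therefore $M_0'=M_0$, and transporting this equality back through $\Psi$ shows that $M(L_R\lspace)$ is maximal abelian. I expect the genuine content to lie in the commutativity step: it is exactly there that the specific geometry of the labelled space enters, through the fact (Proposition~\ref{p:isobasis}) that the interior of the isotropy is assembled from cycles without exits and hence has cyclic fibres. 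The commutant-of-the-diagonal computation is general ample-Hausdorff groupoid theory and the concluding manipulation is purely formal; the only technical points requiring care are the clopenness of supports (which is why Hausdorffness of $G$ is needed) and the identification of $A_R(\text{Iso}(G)^0)$ with the functions in $A_R(G)$ supported on the open subgroupoid $\text{Iso}(G)^0$.
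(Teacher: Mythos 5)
Your proof is correct, but it takes a more self-contained route than the paper. Both arguments reduce, via Proposition~\ref{p:abcore}, to showing that $M_0=A_R(\text{Iso}(\F\ltimes_\varphi\ftight)^0)$ is maximal abelian in $A_R(\F\ltimes_\varphi\ftight)$, and both ultimately rest on the isotropy groups being cyclic, hence abelian. The paper, however, dispatches the groupoid-level step by citation: it notes via Remark~\ref{rmk:iso} that each isotropy group is trivial or of the form $\{\beta\gamma^n\beta^{-1}\mid n\in\Z\}$, and then invokes \cite[Corollary~2.3]{HazratLi}, which is exactly the general statement that abelian interior-of-isotropy forces $A_R(\text{Iso}(G)^0)$ to be maximal abelian in $A_R(G)$ for an ample Hausdorff groupoid. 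You instead reprove that general fact: commutativity of $M_0$ via the explicit basis of Proposition~\ref{p:isobasis} and products of compact-open bisections, the inclusion of the commutant of the diagonal into $M_0$ via the $1_B*n$ versus $n*1_B$ computation together with openness of supports, and the formal sandwich $M_0'\scj A_R(G^{(0)})'\scj M_0\scj M_0'$. The paper's route buys brevity; yours buys independence from the external reference, in effect re-deriving \cite[Corollary~2.3]{HazratLi} in this setting, at the cost of the extra bookkeeping you correctly flag (clopen supports, functions supported on an open subgroupoid). Two small points: your commutativity paragraph treats only the basic bisections of $\text{Iso}(\F\ltimes_\varphi\ftight)^0\setminus(\F\ltimes_\varphi\ftight)^{(0)}$, so products in which one factor lies over $\eword$ also need the (easy) observation, which you make only parenthetically later, that diagonal functions commute with everything supported in the isotropy; and your assertion that $s$ and $t$ lie in a common cyclic subgroup of $\F$ whenever their supports meet is precisely the upgrade from Remark~\ref{rmk:iso} (a constraint on individual elements) to cyclicity of the full stabilizer of an eventually periodic word, which the paper also uses without proof in its own argument, so you are at the same level of rigor there.
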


\begin{proof}
Let $\xi\in\ftight$. By Remark~\ref{rmk:iso}, either the isotropy group associated with $\xi$ is trivial or there exists $\beta\in\awstar$ and $\gamma\in\awplus$ such that the isotropy group is $\{\beta\gamma^n\beta^{-1}\mid n\in\mathbb{Z}\}$. In both cases, we have an abelian group. The result then follows from Proposition~\ref{p:abcore} and \cite[Corollary~2.3]{HazratLi}.
\end{proof}

\begin{theorem}\label{GUTA}[Generalized uniqueness theorem]
    Let $\lspace$ be a normal labelled space and $R$ and commutative unital ring. Also, let  $\phi:L_R\lspace\to B$ be an $R$-algebra homomorphism. Then, $\phi$ is injective if and only if $\phi|_{M(L_R\lspace)}$ is injective.
\end{theorem}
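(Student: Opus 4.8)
The plan is to dispense with the forward implication immediately, since it is formal: if $\phi$ is injective on all of $L_R\lspace$, then its restriction to the subalgebra $M(L_R\lspace)$ is automatically injective. The content is entirely in the converse, so I would assume that $\phi|_{M(L_R\lspace)}$ is injective and work to deduce that $\phi$ itself is injective. The natural move is to transport the whole problem to the Steinberg algebra picture, where the abelian core has a transparent description. Write $G=\F\ltimes_\varphi\ftight$, let $\Psi\colon L_R\lspace \to A_R(G)$ be the isomorphism of Theorem~\ref{thm:LeavittSteinbergIsom}, and set $\widetilde{\phi}=\phi\circ\Psi^{-1}\colon A_R(G)\to B$. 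Since $\Psi$ is an isomorphism, $\phi=\widetilde{\phi}\circ\Psi$ is injective if and only if $\widetilde{\phi}$ is injective; and since $\Psi$ restricts to an isomorphism of $M(L_R\lspace)$ onto $\Psi(M(L_R\lspace))=A_R(\text{Iso}(G)^0)$ by Proposition~\ref{p:abcore}, the map $\phi|_{M(L_R\lspace)}$ is injective if and only if $\widetilde{\phi}|_{A_R(\text{Iso}(G)^0)}$ is injective. Thus the statement for $L_R\lspace$ is exactly equivalent to the corresponding statement for $\widetilde{\phi}$ on the Steinberg algebra of $G$.

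At this point I would invoke the generalized uniqueness theorem for Steinberg algebras of ample Hausdorff groupoids, which asserts that a ring homomorphism out of $A_R(G)$ is injective precisely when its restriction to $A_R(\text{Iso}(G)^0)$ is injective. The hypotheses are met here: the groupoid $G=\F\ltimes_\varphi\ftight$ is ample by \cite[Proposition~5.4]{GillesDanie}, and it is Hausdorff because it is the transformation groupoid of a partial action of the discrete group $\F$ on the Hausdorff space $\ftight$ (so $G\scj \F\times\ftight$ carries a Hausdorff subspace topology). Applying this result to $\widetilde{\phi}$ yields that $\widetilde{\phi}$ is injective if and only if $\widetilde{\phi}|_{A_R(\text{Iso}(G)^0)}$ is injective, which by the reduction of the previous paragraph is exactly the desired equivalence for $\phi$.

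The main obstacle is really concentrated in the cited groupoid-level uniqueness theorem; all the labelled-space-specific labor has already been done, namely in Proposition~\ref{p:isobasis} (the explicit description of $\text{Iso}(\F\ltimes_\varphi\ftight)^0$ via cycles without exits) and in Proposition~\ref{p:abcore} (the identification $\Psi(M(L_R\lspace))=A_R(\text{Iso}(G)^0)$). Should a directly citable groupoid statement in exactly this form be unavailable, the fallback is to prove it by hand for Hausdorff ample $G$: given a nonzero $a\in A_R(G)$, pick $\gamma$ with $a(\gamma)\neq 0$ and, using the basis of compact-open bisections, multiply $a$ by suitable characteristic functions supported near the units $s(\gamma)$ and $r(\gamma)$ to extract a nonzero element of $A_R(\text{Iso}(G)^0)$ lying in the subalgebra generated by $a$; injectivity of $\widetilde{\phi}$ on $A_R(\text{Iso}(G)^0)$ then forces $\widetilde{\phi}(a)\neq 0$. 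I expect the delicate point in any such hand argument to be the Hausdorffness bookkeeping guaranteeing that the extracted element genuinely lands in $A_R(\text{Iso}(G)^0)$ rather than in a larger piece of $A_R(G)$.
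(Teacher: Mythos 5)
Your proposal is correct and takes essentially the same route as the paper: the paper's proof consists precisely of citing the generalized uniqueness theorem for Steinberg algebras of ample Hausdorff groupoids (\cite[Theorem~3.1]{CEP}) together with Proposition~\ref{p:abcore}, which matches your main argument of transporting $\phi$ through $\Psi$ and applying the groupoid-level result to $A_R(\text{Iso}(\F\ltimes_\varphi\ftight)^0)$. The hand-proof fallback you sketch is unnecessary, as the citable statement exists in exactly the form you need.
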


\begin{proof}
This follows from \cite[Theorem~3.1]{CEP} and Proposition~\ref{p:abcore}.
\end{proof}

\section{Simplicity}\label{section:simplicity}

In this section, we use the groupoid realization of Leavitt labelled path algebras presented in Section~6 to describe simplicity criteria that depend on the base field of the algebra and on combinatorial properties of the associated labelled space (Theorem~\ref{theorem:simple.labelled.space.algebra}). Along the way, we characterize effectiveness of the associated groupoid in terms of Condition~($L_\acf$) and topological freeness of the associated partial action (Propositon~\ref{gpdeffect}). We also take the opportunity to characterize continuous orbit equivalence of topologically free partial actions associated with labelled spaces (Corollary~\ref{orb1qnob}). We finish the section with an application of the simplicity criteria, presenting two labelled graphs that represent the same edge shift space, but which possess non isomorphic associated Leavitt algebras (Example~\ref{tubarao}).

\begin{proposition}\label{gpdeffect}
Let $\lspace$ be a normal labelled space. The following are equivalent:
\begin{enumerate}[(i)]
    \item\label{i:ef1} The labelled space $\lspace$ satisfies Condition~($L_\acf$).
    \item\label{i:ef2} $\F\ltimes_\varphi\ftight$ is topologically principal.
    \item\label{i:ef3} $\F\ltimes_\varphi\ftight$ is effective.
    \item\label{i:eg} The partial action $\varphi$ of $\F$ on $\ftight$ is topologically free.
\end{enumerate}
\end{proposition}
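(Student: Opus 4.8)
The plan is to treat effectiveness, statement~(iii), as the hub, since it is the property most directly read off from Proposition~\ref{p:isobasis}. Write $G=\F\ltimes_\varphi\ftight$ and recall that $G$ is an ample, hence étale and Hausdorff, groupoid, so $G^{(0)}$ is clopen and contained in $\text{Iso}(G)^0$. For (i)$\Leftrightarrow$(iii) I would argue as follows. Proposition~\ref{p:isobasis} exhibits a basis of $\text{Iso}(G)^0\setminus G^{(0)}$ consisting of the sets $\{t\}\times V_{(\beta,C,\beta)}$ attached to cycles $(\gamma,C)$ without exits; hence $\text{Iso}(G)^0\setminus G^{(0)}=\emptyset$ precisely when there is no (nonempty) cycle without exits, which is exactly Condition~$(L_\acf)$. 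Since $G^{(0)}\subseteq\text{Iso}(G)^0$ always, this says $\text{Iso}(G)^0=G^{(0)}$, i.e. $G$ is effective. For the contrapositive of (iii)$\Rightarrow$(i), given a cycle $(\gamma,C)$ without exits with $C\neq\emptyset$, Lemma~\ref{l:intiso} with $\beta=\eword$ yields $\{\gamma\}\times V_{(\gamma,C,\gamma)}\subseteq\text{Iso}(G)^0$; this set is nonempty by Remark~\ref{rmk:V_e.not.empty} and misses $G^{(0)}$ because $\gamma\neq\eword$, so $G$ is not effective.

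The equivalence (iii)$\Leftrightarrow$(iv) is a matter of unwinding definitions. For $t\in\F$ let $F_t$ denote the fixed-point set of the partial homeomorphism attached to $t$ (a subset of $V_t\cap V_{t^{-1}}$), so that the isotropy bundle is $\bigsqcup_{t\in\F}\{t\}\times F_t$. Because the slices $\{t\}\times V_t$ are clopen in $G$, taking interiors slice by slice gives $\text{Iso}(G)^0=\bigsqcup_{t\in\F}\{t\}\times\mathrm{int}(F_t)$, the term $t=\eword$ contributing exactly $G^{(0)}$. Therefore $G$ is effective if and only if $\mathrm{int}(F_t)=\emptyset$ for every $t\neq\eword$, which is the definition of topological freeness of $\varphi$. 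Here I use that $\ftight$ is Hausdorff, so each $F_t$ is closed and $\mathrm{int}(F_t)$ is its topological interior.

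It remains to prove (ii)$\Leftrightarrow$(iii). The implication (ii)$\Rightarrow$(iii) holds for any ample Hausdorff groupoid: if $G$ were not effective then $\text{Iso}(G)^0\setminus G^{(0)}$ would be open and nonempty, so it would contain a nonempty compact-open bisection $U$, and every unit in the open set $r(U)$ would have nontrivial isotropy, contradicting density of the units with trivial isotropy. For (iii)$\Rightarrow$(ii) I would run a Baire category argument: by (iii)$\Leftrightarrow$(iv) each $F_t$ with $t\neq\eword$ is closed with empty interior, and $\ftight$, being locally compact Hausdorff, is a Baire space, so $\bigcup_{t\neq\eword}F_t$ is meager and its complement---the set of units with trivial isotropy---is dense, giving topological principality.

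The main obstacle is exactly this last step: the Baire argument needs $\F$, equivalently the alphabet $\alf$, to be countable, so that $\bigcup_{t\neq\eword}F_t$ is a countable union of nowhere dense closed sets. This is the only point where the proof uses more than the formal structure already packaged in Proposition~\ref{p:isobasis} and the definitions, and it is what makes effectiveness and topological principality coincide in this setting; in the general (uncountable) case only (ii)$\Rightarrow$(iii) survives. All remaining verifications---étaleness and Hausdorffness of $G$, clopenness of the slices, and nonemptiness of the relevant $V_e$---are already available from earlier sections.
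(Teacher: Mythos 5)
Your steps (i)$\Leftrightarrow$(iii) (via Proposition~\ref{p:isobasis} and Lemma~\ref{l:intiso}), (iii)$\Leftrightarrow$(iv) (slice-by-slice computation of $\text{Iso}(\F\ltimes_\varphi\ftight)^0$), and (ii)$\Rightarrow$(iii) (the general fact for ample Hausdorff groupoids, which the paper imports from \cite{JeanCartan}) are all correct, and your (iii)$\Rightarrow$(i) is essentially the paper's own argument: the paper also deduces non-effectiveness from a cycle without exits using Lemma~\ref{l:intiso}.

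The genuine gap is exactly where you flagged it, and it is not a removable technicality of your write-up: your only route into statement (ii) is the Baire category argument for (iii)$\Rightarrow$(ii), which needs $\bigcup_{t\neq\eword}F_t$ to be a \emph{countable} union of closed nowhere dense sets, i.e.\ it needs the alphabet $\alf$ (equivalently $\F$) to be countable. The proposition makes no such assumption --- the paper allows arbitrary alphabets --- and the countability-free statement is precisely the point: the remark following Theorem~\ref{theorem:simple.labelled.space.algebra} explains that this proposition is what allows the second-countability hypothesis of \cite[Theorem~6.16]{GillesDanie} to be removed from the simplicity characterization, so a proof that reinstates countability proves a strictly weaker result. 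The paper avoids Baire category entirely by closing the cycle of implications through (i) rather than directly from (iii) to (ii): it uses (iii)$\Rightarrow$(i) (your argument), then (i)$\Rightarrow$(ii), quoted from \cite[Theorem~6.13]{GillesDanie}, and then (ii)$\Rightarrow$(iii). The implication (i)$\Rightarrow$(ii) is not formal and does not follow from Proposition~\ref{p:isobasis}; it is a labelled-space-specific density argument: assuming Condition~($L_\acf$), inside any nonempty basic open set $V_{e:e_1,\ldots,e_n}$ one \emph{constructs} a tight filter with trivial isotropy, using Remark~\ref{rmk:iso} (nontrivial isotropy forces an eventually periodic word $\beta\gamma^\infty$) together with exits of cycles to escape periodicity. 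To repair your proof without adding hypotheses, replace the Baire step by this construction (or by the citation to \cite[Theorem~6.13]{GillesDanie}), keeping the rest of your argument as is.
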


\begin{proof}
The equivalence between \eqref{i:ef1} and \eqref{i:ef2} is shown in \cite[Theorem~6.13]{GillesDanie} and the equivalence between \eqref{i:ef2} and \eqref{i:eg} is shown in \cite[Proposition~6.3]{GillesDanie}. The implication \eqref{i:ef2}$\Rightarrow$\eqref{i:ef3} is in \cite[Proposition~3.6]{JeanCartan}. 

We prove \eqref{i:ef3}$\Rightarrow$\eqref{i:ef1}. Suppose that $\lspace$ does not satisfy condition ($L_\acf$), that is, there exists a cycle without exits $(\gamma,C)$. By Lemma~\ref{l:intiso}, $\text{Iso}(\F\ltimes_\varphi\ftight)^0$ contains elements other than units of $\F\ltimes_\varphi\ftight$, so that $\F\ltimes_\varphi\ftight$ is not effective.
\end{proof}

 Now that we have characterized topological freeness of the partial action associated with a labelled space, we describe continuous orbit equivalence of partial actions associated with labelled spaces in terms of diagonal preserving isomorphisms (we refer the reader to \cite{MR3743184} for the definition of continuous orbit equivalence between partial actions).

\begin{corollary}
    \label{orb1qnob}
    Let $\lspacei{i}$, i= 1,2, be two normal labelled spaces such that the associated partial actions $\varphi_i$ of the free group $\mathbb{F}_i$ on $\ftight_i$, $i=1,2$ are topologically free, and let $R$ be an integral domain. Then $\varphi_1$ is continuous orbit equivalent to $\varphi_2$ if, and only if, there is an isomorphism $\Phi:\Lc(\ftight_1,R)\rtimes_{\hat{\varphi_1}} \mathbb{F}_1 \rightarrow \Lc(\ftight_2,R)\rtimes_{\hat{\varphi_2}} \mathbb{F}_2 $ such that $\Phi (\Lc(\ftight_1, R)) = \Lc(\ftight_2, R) $.
\end{corollary}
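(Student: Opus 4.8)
The plan is to deduce Corollary~\ref{orb1qnob} by combining the groupoid-level characterization of continuous orbit equivalence from \cite{MR3743184} with the effectiveness result we have just established in Proposition~\ref{gpdeffect}. The reference \cite{MR3743184} provides a theorem stating that, for partial actions of discrete groups whose transformation groupoids are effective (topologically principal), continuous orbit equivalence of the partial actions is equivalent to the existence of a diagonal-preserving isomorphism of the associated skew group rings (equivalently, an isomorphism of the transformation groupoids). So the whole corollary should fall out once we verify the effectiveness hypothesis is in force.

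Let me outline the steps. First, I would invoke the standing hypothesis that both $\varphi_1$ and $\varphi_2$ are topologically free. By Proposition~\ref{gpdeffect} (the equivalence of \eqref{i:ef3} and \eqref{i:eg}), topological freeness of $\varphi_i$ is equivalent to $\F_i\ltimes_{\varphi_i}\ftight_i$ being effective, for $i=1,2$. This is exactly the hypothesis needed to apply the continuous orbit equivalence theorem of \cite{MR3743184}. Second, I would recall that $R$ is an integral domain, which is the coefficient-ring hypothesis under which the cited theorem holds. Third, I would apply \cite[Theorem~4.5 and the continuous orbit equivalence theorem]{MR3743184} directly: for effective groupoids arising from partial actions, the statements ``$\varphi_1$ is continuous orbit equivalent to $\varphi_2$'', ``the groupoids $\F_1\ltimes_{\varphi_1}\ftight_1$ and $\F_2\ltimes_{\varphi_2}\ftight_2$ are isomorphic'', and ``there is a diagonal-preserving isomorphism of the skew group rings'' are all equivalent. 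In fact, since Corollary~\ref{orb2qnob} already records that items (1) and (3) there are equivalent without any freeness hypothesis, the only genuinely new content here is that \emph{continuous orbit equivalence} gets added to the list of equivalent conditions, and this addition is precisely what effectiveness buys us.

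Concretely, the forward direction (continuous orbit equivalence implies the diagonal-preserving isomorphism) uses the construction in \cite{MR3743184} that turns a continuous orbit equivalence into a groupoid isomorphism and hence, via the Steinberg-algebra functoriality together with Theorem~\ref{thm:LeavittSteinbergIsom} and Proposition~\ref{prop:diag}, into the desired $\Phi$. For the converse, given a diagonal-preserving isomorphism $\Phi$, Corollary~\ref{orb2qnob} yields an isomorphism of the transformation groupoids; then the effectiveness of these groupoids (guaranteed by topological freeness via Proposition~\ref{gpdeffect}) allows us to invoke the reconstruction theorem in \cite{MR3743184} to recover a continuous orbit equivalence between $\varphi_1$ and $\varphi_2$. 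The diagonal-preservation identity $\Phi(\Lc(\ftight_1,R))=\Lc(\ftight_2,R)$ matches the diagonal subalgebras because, by Proposition~\ref{prop:diag}, these diagonals correspond to the unit spaces of the respective groupoids.

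I expect the main obstacle to be purely a matter of \emph{checking that the hypotheses of the cited theorem in \cite{MR3743184} match ours exactly}, rather than any substantive new argument: one must confirm that the notion of continuous orbit equivalence used there for partial actions of discrete groups coincides with the one we are citing, that the integral-domain assumption on $R$ is the right coefficient hypothesis, and that effectiveness (not merely topological principality) is the precise condition required for the reconstruction direction. Since \eqref{i:ef2} and \eqref{i:ef3} in Proposition~\ref{gpdeffect} are both available and are equivalent under our standing assumptions, either formulation in \cite{MR3743184} can be accommodated. Once the dictionary is in place, the proof is a one-line appeal to \cite{MR3743184} together with Proposition~\ref{gpdeffect} and Corollary~\ref{orb2qnob}.
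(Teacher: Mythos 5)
Your proposal is correct and follows essentially the same route as the paper: the paper's proof is a direct appeal to the partial skew group ring realization (Theorem~\ref{thm:SkewRingLeavittIsom}) together with \cite[Theorem~4.9]{MR3743184}, which is exactly the orbit-equivalence/diagonal-preserving-isomorphism theorem you invoke under the topological freeness hypothesis. Your additional unpacking (effectiveness via Proposition~\ref{gpdeffect}, the groupoid isomorphism via Corollary~\ref{orb2qnob}, and the reconstruction direction) is just the internal mechanism of the cited theorem, so no new argument is needed beyond the one-line citation.
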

\begin{proof}
This follows from the realization of a Leavitt labelled path algebra as a partial skew ring (Theorem~\ref{thm:SkewRingLeavittIsom}) and from \cite[Theorem~4.9]{MR3743184}.
\end{proof}

Before we present the simplicity criteria for Leavitt labelled path algebras, we recall the definition of hereditary and saturated subsets.

\begin{definition}
	Let $\lspace$ be a normal labelled space. A subset $H$ of $\acf$ is  \emph{hereditary} if the following conditions hold:
	\begin{enumerate}[(i)]
		\item $r(A,\alpha)\in H$ for all $A\in H$ and all $\alpha\in\awstar$,
		\item $A\cup B\in H$ for all $A,B\in H$,
		\item if $B\in\acf$ is such that $B\scj A$ for some $A\in H$, then $B\in H$.
	\end{enumerate}
	A hereditary set $H$ is \emph{saturated} if given $A\in\acf_{reg}$ such that $r(A,a)\in H$ for all $a\in\alf$, then $A\in H$.
\end{definition}

\begin{theorem}\label{theorem:simple.labelled.space.algebra}
	Let $\lspace$ be a normal labelled space and $R$ a commutative unital ring. Then $L_R\lspace$ is simple if and only if $R$ is a field, the labelled space satisfies condition $(L_\acf)$, and $\{\emptyset\}$ and $\acf$ are the only hereditary saturated subsets of $\acf$.
\end{theorem}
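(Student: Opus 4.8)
**

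The plan is to reduce simplicity of $L_R\lspace$ to the dynamical simplicity of the associated groupoid $\F\ltimes_\varphi\ftight$ via the Steinberg algebra realization in Theorem~\ref{thm:LeavittSteinbergIsom}, and then to translate the groupoid-theoretic conditions into the combinatorial and ring-theoretic conditions appearing in the statement. The key external input is the known simplicity criterion for Steinberg algebras: for an ample Hausdorff groupoid $G$, the algebra $A_R(G)$ is simple if and only if $R$ is a field, $G$ is effective, and $G$ is minimal (i.e.\ $G^{(0)}$ has no proper non-empty open invariant subsets). The three conditions in the theorem should correspond exactly to: $R$ being a field; effectiveness, which by Proposition~\ref{gpdeffect} is equivalent to Condition~$(L_\acf)$; and minimality of $\F\ltimes_\varphi\ftight$, which I expect to be equivalent to $\{\emptyset\}$ and $\acf$ being the only hereditary saturated subsets of $\acf$.

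First I would invoke Theorem~\ref{thm:LeavittSteinbergIsom} to identify $L_R\lspace\cong A_R(\F\ltimes_\varphi\ftight)$, noting that this groupoid is ample and Hausdorff (the topology on $\ftight$ is Hausdorff with a basis of compact-open sets, as recorded just before Remark~\ref{rmk:V_e.not.empty}). Then I would cite a simplicity theorem for Steinberg algebras over a commutative unital ring (of the Brown--Clark--Farthing--Sims / Steinberg type) to obtain that $A_R(\F\ltimes_\varphi\ftight)$ is simple exactly when $R$ is a field, the groupoid is effective, and the groupoid is minimal. The effectiveness equivalence is immediate from Proposition~\ref{gpdeffect}, equating it with Condition~$(L_\acf)$. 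The necessity of $R$ being a field is standard: if $R$ has a proper non-zero ideal $\mathfrak{a}$, then $\mathfrak{a}\cdot L_R\lspace$ is a proper non-zero two-sided ideal, using Lemma~\ref{lem:projections}(i),(ii) to ensure it is non-zero and proper.

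The main obstacle, and the heart of the argument, will be establishing the dictionary between invariant open subsets of $\ftight$ and hereditary saturated subsets of $\acf$. The plan is to show that $H\mapsto U_H := \bigcup_{A\in H} V_{(\eword,A,\eword)}$ gives an order isomorphism between hereditary saturated subsets of $\acf$ and open invariant subsets of $\ftight$, with inverse sending an open invariant $U$ to $\{A\in\acf\mid V_{(\eword,A,\eword)}\scj U\}$. I would verify that invariance under the partial action $\varphi$ corresponds to closure under relative ranges $r(A,\alpha)$ (hereditarity condition (i)) together with the saturation condition, using the explicit description of $\varphi$ in Equations~\eqref{vaccinatedaligator} and \eqref{eq:remove.beginning} and the relation \eqref{eq:Vab-1}; the Cuntz--Krieger relation (v) of Definition~\ref{def:LPA}, which drives saturation, should reappear here through the decomposition of $V_{(\eword,A,\eword)}$ for $A\in\acf_{reg}$ as in Corollary~\ref{unital}. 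Under this correspondence, $\ftight$ having no proper non-empty open invariant subset translates precisely to $\acf$ having only $\{\emptyset\}$ and $\acf$ as hereditary saturated subsets, completing the chain of equivalences.

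I expect the forward direction of the bijection (hereditary saturated $\Rightarrow$ invariant open) to be routine, but the reverse direction — extracting saturation from $\varphi$-invariance — to require care, since it must correctly handle both finite-type and infinite-type tight filters as classified in Theorem~\ref{thm:TightFiltersType} and the boundary behaviour captured by Lemma~\ref{lem:closed}.
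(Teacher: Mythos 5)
Your overall strategy coincides with the paper's: the paper likewise identifies $L_R\lspace$ with $A_R(\F\ltimes_\varphi\ftight)$ via Theorem~\ref{thm:LeavittSteinbergIsom}, invokes the simplicity criterion for Steinberg algebras of ample Hausdorff groupoids ($R$ a field, groupoid effective and minimal), and handles effectiveness through Proposition~\ref{gpdeffect}. The one place you diverge is the minimality step: the paper does not prove the equivalence ``minimal $\Leftrightarrow$ only trivial hereditary saturated subsets'' at all, but cites it from \cite[Theorem~6.15]{GillesDanie}, whereas you propose to prove it via the correspondence $H\mapsto U_H=\bigcup_{A\in H}V_{(\eword,A,\eword)}$. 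That step, which you call routine, is exactly where your proposal breaks down.

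The set $U_H$ is in general \emph{not} invariant. Invariance requires $\varphi_t(U_H\cap V_{t^{-1}})\scj U_H$ for every $t\in\F$, and this fails already for $t=a$ a generator: membership in $U_H$ only says that the filter's ``current position'' $\xi_0$ meets $H$, while $\varphi_a$ prepends the letter $a$ and moves the position to sets of sources of $a$-labelled edges, which need not meet $H$. Concretely, take $\dgraph^0=\{v,w,u\}$, two edges $v\to w$ and $v\to u$ labelled $a$ and $b$ respectively, and $\acf=\powerset{\dgraph^0}$. Then $H=\{\emptyset,\{w\}\}$ is hereditary and saturated (saturation does not force $\{v\}\in H$, since $r(\{v\},b)=\{u\}\notin H$). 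The tight filter $\xi$ of finite type with word $\eword$ and $\xi_0$ the principal ultrafilter at $w$ lies in $U_H=V_{(\eword,\{w\},\eword)}$ and in $V_{a^{-1}}=V_{(\eword,r(a),\eword)}=V_{(\eword,\{w\},\eword)}$, but $\varphi_a(\xi)$ is the filter with word $a$ whose zeroth filter $\{A\in\acf\mid r(A,a)\in\varphi_a(\xi)_1\}=\{A\mid v\in A\}$ does not contain $\{w\}$, so $\varphi_a(\xi)\notin U_H$. Thus $U_H$ is open but not invariant; the set one must use is $\bigcup\{V_{(\alpha,A,\alpha)}\mid \alpha\in\awstar,\ A\in H\cap\acfrg{\alpha}\}$ (filters that \emph{eventually} enter $H$), and proving its invariance, together with the converse extraction of a nontrivial hereditary saturated set from a nontrivial open invariant set, is precisely the content of the result the paper cites. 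Note also that your claim of an order \emph{isomorphism} between hereditary saturated subsets and \emph{all} open invariant subsets is stronger than what is true: already for graphs with infinite emitters, open invariant subsets of the boundary path space correspond to admissible pairs $(H,S)$ involving breaking vertices, not to $H$ alone. What holds, and is all that minimality requires, is the equivalence of nontriviality on both sides. With $U_H$ corrected and the bijectivity claim weakened accordingly, your outline can be completed, but as written the forward direction fails.
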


\begin{proof}

By \cite[Corollary~4.6]{CEM} (or \cite[Theorem~3.5]{Bensimple}) the Steinberg algebra (over a field) of an ample, Hausdorff groupoid is simple if, and only if, the groupoid is effective and minimal. 

The result now follows from the realization of $L_R\lspace$ as a Steinberg algebra (Theorem~\ref{thm:LeavittSteinbergIsom}),  Proposition~\ref{gpdeffect}, which characterizes effectiveness of  $\F\ltimes_\varphi\ftight$, and \cite[Theorem~6.15]{GillesDanie}, which characterizes minimality of $\F\ltimes_\varphi\ftight$.
\end{proof}

\begin{remark}
In \cite[Theorem~6.16]{GillesDanie}, a similar result characterizing simplicity of the C*-algebra of a labelled space is proved. There, $\F\ltimes_\varphi\ftight$ is also assumed second-countable in order to prove the implication that if the algebra is simple, then the labelled space satisfies condition $(L_\acf)$, and $\{\emptyset\}$ and $\acf$ are the only hereditary saturated subsets of $\acf$. Using Proposition~\ref{gpdeffect}, we can remove the second-countability assumption and obtain a result analogous to Theorem~\ref{theorem:simple.labelled.space.algebra} in the C*-algebraic case.
\end{remark}

\begin{example}\label{tubarao}
The two labelled graphs $\lgraphgi{E}{L}{1}$ and $\lgraphgi{E}{L}{2}$ of \cite[Example~3.3(iii)]{BP1} shown below represent the usual even shift of symbolic dynamics.

\resizebox{12cm}{!}{
		\begin{tikzpicture}[->,>=stealth',shorten >=1pt,auto,thick,main node/.style={circle,draw,thick,font=\bfseries}]
		\node[main node] (1) {$v_1$};
		\node[main node] (2) [right=2.5cm of 1] {$v_2$};
		\node[main node] (3) [below=1cm of 1] {$v_3$};
		
		\node (6) [left = 1.7cm of 1] {\text{and}};
		\node[main node] (4) [left=6.5cm of 1] {$v_1$};
		\node[main node] (5) [right=2.5cm of 4] {$v_2$}; 
		
		\Loop[dist=2cm,dir=WE,label=$1$,labelstyle=left](1)  
		\Loop[dist=2cm,dir=SO,label=$0$,labelstyle=below](3)
		
		\draw[->] (1) to node [left] {1} (3);
		\draw[->] (2) to [bend right] node [above] {0} (1);
		\draw[->] (1) to [bend right] node [below] {0} (2);
		
		\Loop[dist=2cm,dir=WE,label=$1$,labelstyle=left](4) 
		
		\draw[->] (5) to [bend right] node [above] {0} (4);
		\draw[->] (4) to [bend right] node [below] {0} (5);		
		\end{tikzpicture}
	}
	
In each case, consider the accommodating family of all sets of vertices. Even if the labelled paths are the same, the associated algebras are not isomorphic. Indeed, by Proposition~\ref{LLPALPA}, the Leavitt labelled path algebras are isomorphic to the Leavitt path algebras of the underlying graphs. Now, the first Leavitt path algebra is simple, but the second one has an ideal (for example, the ideal generated by $v_3$), and hence they cannot be isomorphic.
\end{example}

\bibliographystyle{abbrv}
\bibliography{ref}

\end{document}